\newtheorem{teorema}{Theorem}
\newtheorem{proposition}[teorema]{Proposition}
\newtheorem{remark}[teorema]{Remark}
\newtheorem{lema}[teorema]{Lemma}
\newtheorem{definicao}[teorema]{Definition}
\newtheorem{corolario}[teorema]{Corollary}
\newcommand{\EE}{\mathbb{E}}
\newcommand{\VV}{\mathbb{V}}
\newcommand{\ZZ}{\mathbb{Z}}
\newcommand{\NN}{\mathbb{N}}
\newcommand{\wh}{\widehat}
\newcommand{\al}{\alpha}
\newcommand{\wt}{\widetilde}
\newcommand{\om}{\omega}
\newcommand{\Up}{\Upsilon}
\theoremstyle{plain}
\begin{document}

\title{{{On the compatibility of  binary sequences}}}
\author{H. Kesten\footnote{Department of Mathematics, 310 Malott Hall, Cornell University, Ithaca, NY 14853-4201 USA.  email:
\textsf{hk21@cornell.edu}} ,   B.N.B. de Lima\footnote{UFMG, Av. Ant\^onio Carlos 6627,
CEP 30123-970, Belo Horizonte, MG, Brasil. \newline email:
\textsf{bnblima@mat.ufmg.br}} , \ V. Sidoravicius\footnote{IMPA,
Estrada Dona Castorina 110, CEP 22460-320, Rio de Janeiro, RJ, Brasil.
email: \textsf{vladas@impa.br}} ,  M. E. Vares\footnote{IM-UFRJ, Av. Athos da Silveira Ramos 149, CEP 21941-909 - Rio de Janeiro, RJ, Brasil.  email:
\textsf{eulalia@cbpf.br}} , } \maketitle
\begin{abstract} An ordered pair of semi-infinite binary sequences $(\eta,\xi)$ is said
to be compatible if there is a way of removing a certain number
(possibly infinite) of ones from $\eta$ and zeroes from $\xi$,
%each
%removal being followed by a unit shift to the left of the part of
%the sequence that stays to the right of the removed digit,
which
would map both sequences to the same semi-infinite sequence. This notion was introduced by  Peter
Winkler, who also posed the following question: $\eta$ and $\xi$ being
independent i.i.d. Bernoulli sequences with parameters $p^\prime$ and
$p$ respectively, does it exist  $(p', p)$ so that the set
of compatible pairs has positive measure?  It is known that this
does not happen for $p$ and $p^\prime$ very close to $1/2$. In the positive direction, we construct, for any $\epsilon > 0$,
a deterministic binary sequence $\eta_\epsilon$ whose set of zeroes has Hausdorff
dimension larger than $1-\epsilon$, and
such that
$\mathbb{P}_p
\{\xi\colon (\eta_\epsilon,\xi) \text { is compatible}\} > 0$ for $p$ small
enough, where $\mathbb{P}_p$ stands for the product Bernoulli
measure with parameter $p$.

\end{abstract}

\textbf{Keywords:} compatibility of sequences, dependent percolation.

\section{Introduction\label{I}}

Consider the set of all semi-infinite binary sequences
$\Xi=\{0,1\}^{\NN}$, with $\NN=\{1,2,\dots\}$. For each $i\in\NN$
define the ``annihilation" operators $\triangle_i^{\mathbf{0}}$ and
$\triangle_i^{\mathbf{1}}:\Xi\rightarrow\Xi$, which act according to
the following rules:  if $\xi=(\xi_1,\xi_2,\dots) \in \Xi$ and
$\xi_i=0$, then
\begin{equation}
\label{annihil0} \triangle_i^{\mathbf{0}}(\xi)_j=
\begin{cases}
\xi_{j+1}, &\mbox{ if }j\geq i,
\\
\xi_j, &\mbox{ if }j< i,
\end{cases} \qquad \quad \text{and} \qquad \quad   \triangle_i^{\mathbf{1}} (\xi) = \xi;
\end{equation}
and, respectively, if $\xi_i=1$, then
\begin{equation}
\label{annihil1} \triangle_i^{\mathbf{0}} (\xi) = \xi, \qquad \quad
\text{and} \qquad \quad
 \triangle_i^{\mathbf{1}}(\xi)_j=
\begin{cases}
\xi_{j+1}\ ,&\mbox{ if }j\geq i,
\\
\xi_j\ , &\mbox{ if }j< i.
\end{cases}
\end{equation}
In other words, for every $ i\in \NN$, the sequence
$\triangle_i^{\mathbf{0}}(\xi)$, respectively
$\triangle_i^{\mathbf{1}} (\xi)$, is obtained from the sequence
$\xi$ by annihilating (deleting) the $i$-th digit if it is 0,
respectively 1, and shifting all elements of $\xi$ which are to the
right of the $i$-th position by one unit to the left. We consider $\Xi$
as metric space with the usual product topology.

\begin{definicao}
\label{compat-binary}  Let $\eta=(\eta_1,\eta_2,\dots)$ and
$\xi=(\xi_1,\xi_2,\dots)$ be elements of $\Xi$. The pair
$(\eta,\xi)$ is said to be compatible if there exist sequences
$(\eta^{(k)})_{k\ge 1}$ and $(\xi^{(k)})_{k\ge 1}$ in $\Xi$ such
that: each $\eta^{(k)}$ is obtained from $\eta$ by finitely many
applications of operators $\triangle_{\cdot}^{\mathbf{1}}$; each
$\xi^{(k)}$ is obtained from $\xi$ by finitely many applications of
operators $\triangle_{\cdot}^{\mathbf{0}}$; as $k \to \infty$, both
sequences converge in $\Xi$ to the same limit.
%non decreasing subsequences, $\{r_n\}_{n\ge 1}$,
%$\{s_n\}_{n\ge 1}$, $\{m(k)\}_{k\ge 1}$, $\{n(k)\}_{k\ge 1}$ in
%$\NN$, such that $\forall k\in\NN$,
%$$
%\big[\triangle^{\mathbf{1}}_{r_{m(k)}} \circ \dots \circ \triangle^{\mathbf{1}}_{r_2}
% \circ \triangle^{\mathbf{1}}_{r_1}(\eta)\big]_k=
%\big[\triangle^{\mathbf{0}}_{s_{n(k)}}\circ\dots \circ
%\triangle^{\mathbf{0}}_{s_2} \circ
%\triangle^{\mathbf{0}}_{s_1}(\xi)\big]_k.$$
\end{definicao}

\noindent Informally speaking, the pair $(\eta,\xi)$ is compatible
if by deleting some ones in the first sequence and deleting some
zeroes in the second sequence one can make them equal.

\medskip

\noindent For $p\in [0,1]$, let $\mathbb P_p$ be the probability product measure
on $\Xi = \{0,1 \}^{\mathbb N}$ such that $\mathbb P_p (\xi_i=1) = p$  for all $i \ge 1$.
Motivated by scheduling problems, P. Winkler (see \cite{Wi}) posed the following question:

\medskip

\noindent {\it Does it exist a pair $(p, p') \in (0,1)^2$
such that}
\begin{equation*}
\mathbb{P}_p \otimes \mathbb P_{p'} \{(\eta,
\xi)\in\Xi\times\Xi\colon (\eta,\xi) \text{ are compatible} \}
>0\,?
\end{equation*}

\medskip
\noindent This question has been addressed in \cite{Gac}.
A simple form of Peierls argument (see \cite{Wi})
shows that if $p$ and $p'$ are close to 1/2, then
\begin{equation*}
\mathbb{P}_p \otimes \mathbb P_{p'}\{(\eta,\xi)\in\Xi\times\Xi\colon
(\eta,\xi)\mbox{ is compatible}\}=0.
\end{equation*}

In this context, it is then natural to introduce the following

\begin{definicao}
For $p \in [0,1]$ we say that $\eta \in \Xi$ is $p$-compatible in $\Xi$, or $p$-compatible for short, if
\begin{equation}
\label{compatibilidade} \mathbb{P}_p\{\xi\in \Xi\colon (\eta, \xi)
\text{ is compatible}\}>0.
\end{equation}
\end{definicao}

\medskip

\noindent For $\underline{\mathbf{1}} := (1,1, 1,
\dots)$, the pair $(\underline{\mathbf{1}},\xi)$ is compatible
as long as the sequence $\xi$ has infinitely many ones.
%Indeed, in
%this case, erasing all zeroes from $\xi$, we obtain
%$\underline{\mathbf{1}}$.
Thus, for all $p>0$
\begin{equation}
\label{25aug1} \mathbb{P}_p\{\xi\in \Xi\colon
(\underline{\mathbf{1}},\xi) \text{ is compatible}\}=1,
\end{equation}
and if $\underline{\mathbf{1}}$ is replaced by $\eta \in \Xi$ which has only finitely
many zeroes, then for every $0< p < 1$
\begin{equation}
\label {25aug2} \mathbb{P}_p\{\xi\in \Xi\colon (\eta,\xi) \text{ is
compatible}\}>0.
\end{equation}
If $\eta$ has infinitely many zeroes but the distance between consecutive
zeroes increases fast enough, for instance exponentially fast with a rate that suitably depends on $p$, it is
straightforward to see that  \eqref{25aug2}
still holds. However it is a priori unclear whether there are deterministic binary
sequences $\eta$ with a richer (and more complicated) set of zeroes $\mathcal{Z}_\eta := \{ i\ge 1: \; \eta_i =0 \}$
which still can be $p$-compatible for some positive $p$.
Our main result can be stated as follows:
%For a different but related question see also \cite{GLR}.

\begin{teorema}
\label{principal}
For every $\epsilon >0$ there exist $0< p_{\epsilon} <1$  and
a binary sequence $\eta\equiv \eta_{\epsilon} \in\Xi$, such that $\mathcal{Z}_{\eta_{\epsilon}}$ is a discrete fractal in the sense of \cite{BT}, with the
Hausdorff dimension $d_H(\mathcal{Z}_\eta) \ge 1-\epsilon$, and such that
\begin{equation*}
\mathbb{P}_p\{\xi \in \Xi \colon (\eta,\xi) \text{ is
compatible}\}>0
\end{equation*}
for any $p < p_{\epsilon}$.
\end{teorema}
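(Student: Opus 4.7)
My plan is to exhibit a simple sufficient condition for compatibility in terms of consecutive-zero gaps, to construct $\eta_\epsilon$ as a polynomially sparse sequence with Hausdorff dimension $1-\epsilon$, and to show that the sufficient condition has positive probability under $\mathbb{P}_p$.

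\textbf{Sufficient condition.} For $\mu\in\Xi$ with infinitely many zeros, let $F_\mu(k)$ denote the position of its $k$-th zero and $\Delta_k^\mu := F_\mu(k)-F_\mu(k-1)$ the consecutive-zero gap, with $F_\mu(0):=0$. I would show: if $\Delta_k^\xi\leq\Delta_k^\eta$ for every $k\geq 1$, then $(\eta,\xi)$ is compatible. Match the $k$-th zero of $\eta$ with the $k$-th zero of $\xi$, and in each inter-zero segment delete the excess $\Delta_k^\eta-\Delta_k^\xi$ ones from $\eta$ (and no zeros from $\xi$). The common resulting sequence has its $k$-th zero at position $F_\xi(k)$, with $\Delta_k^\xi-1$ ones in between, so the two sides agree; the finite-deletion approximations demanded by Definition \ref{compat-binary} are realized by performing all deletions in the first $n$ zero-blocks and letting $n\to\infty$.

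\textbf{Construction of $\eta_\epsilon$.} Define $\eta_\epsilon$ by placing its $k$-th zero at $a_k := \lceil k^{1/(1-\epsilon)}\rceil$ for every $k\geq 1$, with ones elsewhere. Then $|\mathcal{Z}_{\eta_\epsilon}\cap[1,n]| = \#\{k:a_k\leq n\}\sim n^{1-\epsilon}$; a direct Barlow--Taylor type computation (verifying that the relevant upper/lower discrete box dimensions coincide with the discrete Hausdorff dimension for such polynomially thin sets) gives $d_H(\mathcal{Z}_{\eta_\epsilon})=1-\epsilon$. Moreover $\Delta_k^{\eta_\epsilon}\sim(1-\epsilon)^{-1}k^{\epsilon/(1-\epsilon)}\to\infty$, so only finitely many $k$ (at most $K_\epsilon := \lceil(2(1-\epsilon))^{(1-\epsilon)/\epsilon}\rceil$) satisfy $\Delta_k^{\eta_\epsilon}=1$.

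\textbf{Probability estimate.} Under $\mathbb{P}_p$, by the Markov property, the gaps $\Delta_k^\xi$ are i.i.d.\ with $\Delta_k^\xi-1\sim\mathrm{Geom}(1-p)$, so $\mathbb{P}_p(\Delta_k^\xi\leq m)=1-p^m$. By independence, $\mathbb{P}_p\bigl(\Delta_k^\xi\leq\Delta_k^{\eta_\epsilon}\ \forall k\bigr) = \prod_{k\geq 1}\bigl(1-p^{\Delta_k^{\eta_\epsilon}}\bigr)$. Since $\Delta_k^{\eta_\epsilon}\to\infty$ polynomially, the series $\sum_{k\geq 1}p^{\Delta_k^{\eta_\epsilon}}$ converges for every $p\in(0,1)$, and the infinite product is strictly positive. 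Together with the sufficient condition this yields $\mathbb{P}_p\{(\eta_\epsilon,\xi)\text{ compatible}\}>0$ for every $p<p_\epsilon$ (one may take, e.g., $p_\epsilon=1/2$). The main technical point is the Hausdorff-dimension identification for the sparse set $\{a_k\}$ in the Barlow--Taylor framework -- a careful covering argument at each dyadic scale matching the power-law counts $|\mathcal{Z}_{\eta_\epsilon}\cap[1,2^n]|\asymp 2^{n(1-\epsilon)}$; once that is in place the compatibility reformulation and the probability bound are immediate.
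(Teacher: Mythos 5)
Your gap-domination lemma, the construction of a polynomially sparse $\eta_\epsilon$, and the product estimate $\prod_k(1-p^{\Delta_k^{\eta_\epsilon}})>0$ are all correct as far as they go: deleting the excess ones block by block maps $\eta_\epsilon$ exactly onto $\xi$, and since $\Delta_k^{\eta_\epsilon}\to\infty$ polynomially the product is positive for every $p<1$. But this is essentially the ``easy'' observation already made in the Introduction (sequences whose zero-gaps grow fast enough are trivially $p$-compatible), and it proves a strictly weaker statement than Theorem \ref{principal}. The theorem requires $\mathcal{Z}_{\eta_\epsilon}$ to be a \emph{discrete fractal in the sense of Barlow--Taylor}, i.e.\ ${\rm{dim}}_{\rm{H}}=\Delta={\rm{dim}}_{\rm{p}}$, and your set $A=\{\lceil k^{1/(1-\epsilon)}\rceil\}$ fails this. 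Indeed, in the annulus $I_n^{(r)}=[-r^n,r^n)\setminus[-r^{n-1},r^{n-1})$ the set $A$ consists of $\asymp r^{n(1-\epsilon)}$ points whose consecutive spacings are all $\asymp r^{n\epsilon}$ (up to a factor depending only on $r$). Taking $d\asymp r^{n\epsilon}$ in Definition \ref{entropy} (which is allowed for any auxiliary parameter $\delta<1-\epsilon$), one can choose $\asymp r^{n(1-\epsilon)}$ points of $A\cap I_n^{(r)}$ that are $2d$-separated, so $N(d,A\cap I_n^{(r)})\gtrsim r^{n(1-\epsilon)}$ and $(dr^{-n})^\alpha N(d,A\cap I_n^{(r)})\gtrsim r^{n(1-\epsilon)(1-\alpha)}\to\infty$ for every $\alpha<1$. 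Hence $\Delta(A)=1$, while ${\rm{dim}}_{\rm{H}}(A)={\rm{dim}}_{\rm{UM}}(A)=1-\epsilon<1$ (your Hausdorff-dimension computation is fine, via Proposition \ref{C2}); equivalently, the uniform local lower bound $|A\cap[x-r^n,x+r^n)|\geq c'r^{n(1-\epsilon)}$ of Proposition \ref{T3} fails badly at far-out points of $A$, whose neighborhoods look like an arithmetic progression of spacing $\asymp x^{\epsilon}$. So your $\mathcal{Z}_{\eta_\epsilon}$ is not a fractal in the required sense, and the fractality clause cannot be rescued within your scheme: any construction in which the zero-gaps are eventually monotone and tend to infinity has this Hausdorff/packing mismatch.

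This is exactly where the paper's real work lies. Its sequence $\eta(L)$ has a hierarchical, self-similar zero set (zeros recur at all scales around every point of $\mathcal{Z}(L)$, including infinitely many adjacent zeros), which is what makes it a Barlow--Taylor fractal of dimension $\frac{\log L}{\log(3(L+1))}\to 1$; but for such a sequence your sufficient condition is hopeless, since infinitely many gaps equal $1$ force $\prod_k(1-p^{\Delta_k^{\eta}})=0$, and deletions of zeros from $\xi$ become unavoidable. Handling that is the content of the percolation representation (Section \ref{II}, Theorem \ref{weight} and Lemma \ref{compatibility-lemma}) together with the renormalization/grouping of the Bernoulli sequence into an $M$-spaced weighted word (Section \ref{III}, Corollary \ref{sec}). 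In short: your argument correctly yields ``Hausdorff dimension $\geq 1-\epsilon$ and positive probability of compatibility,'' but misses the fractal requirement, which is the genuinely hard part of the theorem.
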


\medskip

\noindent For the proof of Theorem \ref{principal} it is convenient to exploit
a representation of compatibility of binary sequences in the language of dependent oriented two dimensional percolation. In Section \ref{II} we
describe the percolation model defining its configuration in terms of $(\eta, \xi)$,  and give conditions on  $\eta$ and $\xi$ which guarantee the existence of an infinite open path from the origin.

The key ingredient of the proof consists in showing that Bernoulli sequences $\xi$ with small density of ones can be suitably mapped
by appropriate grouping of ones and removing unwanted zeroes into  sequences that satisfy the conditions mentioned in the previous paragraph.
This is done in Sections \ref{III} and \ref{IV} together with the explicit construction of a deterministic binary sequence $\eta$. The proof of Theorem \ref{principal} is completed in Section \ref{IV}. Details of the grouping lemma are given in the Appendix.

\section{Percolation process \label{II}}

In this section we describe the related dependent percolation model.
For this we first introduce an alternative representation of binary
sequences which is useful for our purposes.

Consider the set $\Xi_\infty \subset \Xi$ of all binary sequences
$\xi \in \Xi$ that contain infinitely many ones and infinitely many
zeroes:
$$
\Xi_\infty=\big\{\xi \in \{0,1 \}^{\NN} \colon  \; |\{i: \xi_i \neq
\xi_{i+1}\}| =  \infty\big\}.
$$
Each $\xi \in \Xi_\infty$ can be represented as an element $f(\xi)$
of $\mathbb{Z}_+^{\NN}$, where each run of ones in $\xi$ is replaced
by a single coordinate whose value is the cardinality (length) of
the run, with the corresponding shift to the left of the part of the
sequence that follows the run. More precisely, define
$$
\Psi=\big\{\psi \in \mathbb{Z}_+^{\NN}\colon \psi_i \ge 1 \text{
implies } \psi_{i+1}=0\big\}.
$$

\noindent An element $\psi \in \Psi$   will be called a
weighted word and the value $\psi_j\in\NN$ will be called the weight of the
$j$-th letter of $\psi$. Given $\xi\in \Xi_\infty$,  the sequence $f(\xi)$ can be defined recursively:

\begin{equation} \label{f1}
\big(f(\xi)\big)_1 =
\begin{cases}
k,   & \mbox{ if } \xi_{s} = 1, \mbox{ for } s= 1, \dots , k, \mbox{
and } \xi_{k+1} = 0,
\\
0, & \mbox{ if } \xi_{1} = 0.
\end{cases}
\end{equation}
If $\big(f(\xi)\big)_i, \; 1 \leq i \leq j-1$ are defined, for $j
\ge 2$, next we define
\begin{equation} \label{h}
h_{j-1} (\xi) = \sum_{i=1}^{j-1} \left(\big(f(\xi)\big)_i
+{\mathbf{I}}_{[(f(\xi))_i =0]}\right),
\end{equation}
and we set
\begin{equation} \label{f2}
\big(f(\xi)\big)_j =
\begin{cases}
k,  &  \mbox{ if }\;  \xi_{s} = 1, \mbox{ for } s= h_{j-1} (\xi) + r, \; r= 1, \dots , k,  \\
  &  \qquad \qquad \; \; \, \mbox{ and } \;  \xi_{h_{j-1} (\xi) + k+1} =0 ,
\\
0,   & \mbox{ if } \; \xi_{h_{j-1}(\xi) + 1} =0.
\end{cases}
\end{equation}
The just defined map $f\colon \Xi_\infty \to \Psi$ is one-to-one,
and so defines a bijection between $\Xi_\infty$ and the subset of
$\Psi$ of weighted words which have infinitely many non-zero
entries.

%\noindent For instance:
%\begin{equation*}
%\xi = (0,0,1,0,0,0,1,1,1,0,0,1,1,0,...)
%\end{equation*}
%in this new representation becomes
%\begin{equation*}
%f(\xi) = (0,0,1,0,0,0,3,0,0,2,0,...).
%\end{equation*}

\medskip

\noindent For each $i\in\NN$ define the ``annihilation" operators
$\triangle_i^{\mathbf{0}}$ and
$\triangle_i^{\mathbf{1}}:\mathbb{Z}_+^{\NN}\rightarrow\mathbb{Z}_+^{\NN}$,
which act according to the following rules:  if
$\psi=(\psi_1,\psi_2,\dots) \in \mathbb{Z}_+^{\NN}$ and $\psi_i\ge
1$, then $ \triangle_i^{\mathbf{0}} (\psi) = \psi$; otherwise and when
at least one among $\psi_{i-1}$ or $\psi_{i+1}$ is zero, or $i=1$, then
\begin{equation}
\triangle_i^{\mathbf{0}}(\psi)_j=
\begin{cases}
\psi_{j+1}, &\mbox{ if }j\geq i,
\\
\psi_j, &\mbox{ if }j< i.
\end{cases}
\end{equation}
When $i\ge 2$, $\psi_i=0$ but $\psi_{i-1} \wedge \psi_{i+1}\ge 1$, then
\begin{equation}
\triangle_i^{\mathbf{0}}(\psi)_j=
\begin{cases}
\psi_{j+2}, &\mbox{ if }j\geq i,
\\
\psi_{i-1}+\psi_{i+1},  &\mbox{ if }j= i-1,\\
\psi_j, &\mbox{ if } j< i-1.
\end{cases}
\end{equation}
Similarly, if $\psi_i= 0$, then $\triangle_i^{\mathbf{1}} (\psi) =
\psi$. Otherwise, when $\psi_i=1$ we set
\begin{equation}
\label{25aug3}
\triangle_i^{\mathbf{1}}(\psi)_j=
\begin{cases}
\psi_{j+1}, &\mbox{ if } j\geq i,
\\
\psi_j,  &\mbox{ if }j < i,
\end{cases}
\end{equation}
while when  $\psi_i>1$ we set
\begin{equation}
\label{25aug4}
 \triangle_i^{\mathbf{1}}(\psi)_j=
\begin{cases}
\psi_{i}-1, &\mbox{ if } j=i,
\\
\psi_j, &\mbox{ if }j \neq i.
\end{cases}
\end{equation}

\begin{definicao} \label{compat-weight} Let $\zeta=(\zeta_1,\zeta_2,\dots)$ and $\psi=(\psi_1,\psi_2,\dots)$ be
elements of $\Psi$. The pair $(\zeta,\psi)$ is said to be compatible
if there exist there exist sequences $(\zeta^{(k)})_{k\ge 1}$ and
$(\psi^{(k)})_{k\ge 1}$ in $\Psi$ such that: each $\zeta^{(k)}$ is
obtained from $\zeta$ by finitely many applications of operators
$\triangle_{\cdot}^{\mathbf{1}}$; each $\psi^{(k)}$ is obtained from
$\psi$ by finitely many applications of operators
$\triangle_{\cdot}^{\mathbf{0}}$; as $k \to \infty$, both sequences
have a common limit in $\Psi$ (product topology).
\end{definicao}
%non decreasing subsequences, $\{s_n\}_{n\ge 1}$,
%$\{r_n\}_{n\ge 1}$, $\{n(k)\}_{k\ge 1}$ and  $\{m(k)\}_{k\ge 1}$ in
%$\ZZ_+$,  such that $\forall k\in\NN$,
%$$
%\big[\triangle^{\mathbf{1}}_{r_{m(k)}} \circ \dots \circ \triangle^{\mathbf{1}}_{r_2} \circ
% \triangle^{\mathbf{1}}_{r_1}(\zeta)\big]_k=\big[\triangle^{\mathbf{0}}_{s_{n(k)}}\circ\dots \circ \triangle^{\mathbf{0}}_{s_2} \circ
%\triangle^{\mathbf{0}}_{s_1}(\psi)\big]_k.$$

%%%%%%%%%%%%%%%%%fim sec %%%%%%%%%%%%%%%%%%%%%

\noindent The following proposition follows at once from the definitions.

\begin{proposition}\label{pesos2} Let $(\eta,\xi)$ be a pair of configurations in $\Xi_\infty$, and let $\zeta=f(\eta)$,
$\psi=f(\xi)$ with $f$ the map defined in \eqref{f1}--\eqref{f2}. If the pair
$(\zeta,\psi)$ is compatible, then so is $(\eta,\xi)$.
\end{proposition}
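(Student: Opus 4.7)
The plan is to reverse the encoding map $f$ and transport each annihilation on weighted words, step by step, into a single annihilation on the corresponding binary sequence. Define a decoding map $g:\Psi\to\Xi$ by replacing each entry $\psi_j=k\ge 1$ by a block of $k$ consecutive ones and each entry $\psi_j=0$ by a single zero, concatenating the blocks left-to-right. Since $g\circ f$ is the identity on $\Xi_\infty$, we have $g(\zeta)=\eta$ and $g(\psi)=\xi$, and the idea is simply to decode the witnessing sequences guaranteed by compatibility of $(\zeta,\psi)$ and verify that they witness compatibility of $(\eta,\xi)$.

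First I would check the \emph{operator correspondence}: for every $\zeta,\psi\in\Psi$ and every $i\in\NN$, $g(\triangle_i^{\mathbf{1}}\zeta)=\triangle_j^{\mathbf{1}}\bigl(g(\zeta)\bigr)$ and $g(\triangle_i^{\mathbf{0}}\psi)=\triangle_{j'}^{\mathbf{0}}\bigl(g(\psi)\bigr)$ for suitable $j,j'$ obtained as running sums of the block lengths in the prefix of $\zeta$ or $\psi$. The case analysis is immediate from \eqref{25aug3}--\eqref{25aug4} and the definitions above them: when $\zeta_i\ge 2$, $\triangle_i^{\mathbf{1}}$ decreases $\zeta_i$ by one, which on the binary side removes a single $1$ from the $i$-th run; when $\zeta_i=1$, the entry vanishes, again removing a single $1$ from $g(\zeta)$. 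Analogously for $\triangle^{\mathbf{0}}$: when $\psi_i=0$ with $\psi_{i-1}\wedge\psi_{i+1}\ge 1$, the two adjacent runs merge into one of weight $\psi_{i-1}+\psi_{i+1}$, which is precisely the effect on $g(\psi)$ of deleting the unique $0$ separating those two runs; in the remaining subcases the $i$-th zero entry simply disappears, corresponding to deleting one zero from $g(\psi)$. Iterating, $\eta^{(k)}:=g(\zeta^{(k)})$ is obtained from $\eta$ by finitely many applications of $\triangle_\cdot^{\mathbf{1}}$, and $\xi^{(k)}:=g(\psi^{(k)})$ from $\xi$ by finitely many applications of $\triangle_\cdot^{\mathbf{0}}$.

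Next I would verify the \emph{continuity} of $g$ in the product topology. For any $N$, the convergence $\zeta^{(k)}\to\chi$ in $\Psi$ gives $\zeta^{(k)}_j=\chi_j$ for all $j\le N$ and all large $k$; hence $g(\zeta^{(k)})$ coincides with $g(\chi)$ on its first $L_N:=\sum_{j=1}^N\bigl(\chi_j+\mathbf{I}_{[\chi_j=0]}\bigr)$ positions. Since $L_N\to\infty$ as $N\to\infty$, $g(\zeta^{(k)})\to g(\chi)$ in $\Xi$, and similarly $g(\psi^{(k)})\to g(\chi)$. Therefore $\eta^{(k)}$ and $\xi^{(k)}$ share a common limit in $\Xi$, establishing compatibility of $(\eta,\xi)$.

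The only slightly delicate point is the bookkeeping in the first step: one must track how the position index $j$ in $\Xi$ corresponding to a site $i$ in $\Psi$ shifts after each composed annihilation, and, in the merging subcase of $\triangle^{\mathbf{0}}$, verify that what looks like a two-step change on weighted words is really a single zero-deletion on the decoded binary sequence. All other ingredients are direct translations of the definitions.
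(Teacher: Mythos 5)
Your proof is correct, and it is essentially the argument the paper has in mind: the paper offers no written proof (it says the proposition "follows at once from the definitions"), and your decoding map $g$ with the operator correspondence and continuity check is exactly the natural way to unpack that remark.
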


\medskip

%%%%%%%%%%%%%%%%%
%%%%%%%%%%%%%%%%%%%%%%

\noindent {\bf Percolation process.} The percolation process will be
defined on the oriented graph $\mathcal{G}=(\VV,\EE)$, where
$\VV=\ZZ_+^2$ and $\EE=\{\langle v,w \rangle; w_1=v_1\text{ and }
w_2=v_2+1;  w_1=v_1+1\text{ and } w_2=v_2+1\}$, with $v=(v_1,v_2),
w=(w_1,w_2)$, i.e. the edges consist of the nearest neighbor vertical edges and
the northeast oriented diagonals on the first quadrant of $\ZZ^2$;
all the edges $\langle v,w \rangle$ are oriented from $v$ to $w$.

\medskip

\noindent Given two sequences $ \zeta,\, \psi \in \Psi$, we define
the north-east oriented site percolation configuration
$\omega_{\zeta,\psi}$ on $\mathcal{G}$ by setting:
$\omega_{\zeta,\psi}(0,0)=1$, for $v=(v_1,v_2)\in
\VV\setminus\{(0,0)\}$ set $\omega_{\zeta,\psi}(v)=0$ if $v_1 \wedge
v_2=0$, while for $v_1,v_2 \ge 1$:
%\begin{equation}
%\label{diag}
%\omega_{\xi,\psi} (v) = \begin{cases} 1 \quad &\text { if } v= \mathbf{0}, \\
%0  \quad &\text { if } v_1\wedge v_2=0, \end{cases}
%\end{equation}
%and for $v_1, v_2 \ge 1$:
\begin{equation}
\label{25aug5}
\omega_{\zeta,\psi} (v) = \begin{cases} 1 \quad &\text { if } \zeta_{v_1} \geq \psi_{v_2}, \\
0 & \text { otherwise.} \end{cases}
\end{equation}
We say that $v$ is {\it open} if $\omega_{\zeta,\psi} (v)=1$. An
oriented path $\pi:=\{u=v^{(0)},e^{(1)},v^{(1)},\break\dots, e^{(n)},$ $v^{(n)}=v \}$ from
$u$ to $v$ is said to be open if each vertex $v^{(i)}$ is open (here $e^{(i)}=\langle
v^{(i-1)},v^{(i)}\rangle \in \EE$ for each $i$). We say that $v\in \VV$ belongs to the
open oriented cluster of a vertex $u$
if there is an oriented path. The open oriented
cluster of the origin is denoted by ${\mathcal{C}}_{\langle \zeta,
\psi \rangle} \equiv {\mathcal{C}}_{\langle \zeta, \psi \rangle}
(\mathbf{0}) $. More generally, given any finite subset $I \subset
\ZZ_+^2$, by ${\mathcal{C}}_{\langle \zeta, \psi \rangle} (I)$ we
denote the open oriented cluster of $I$, i.e. the union of the open
oriented clusters of the vertices in $I$.

\begin{figure}[tbpp]
\centering \hskip .5cm
\includegraphics[scale=0.25]{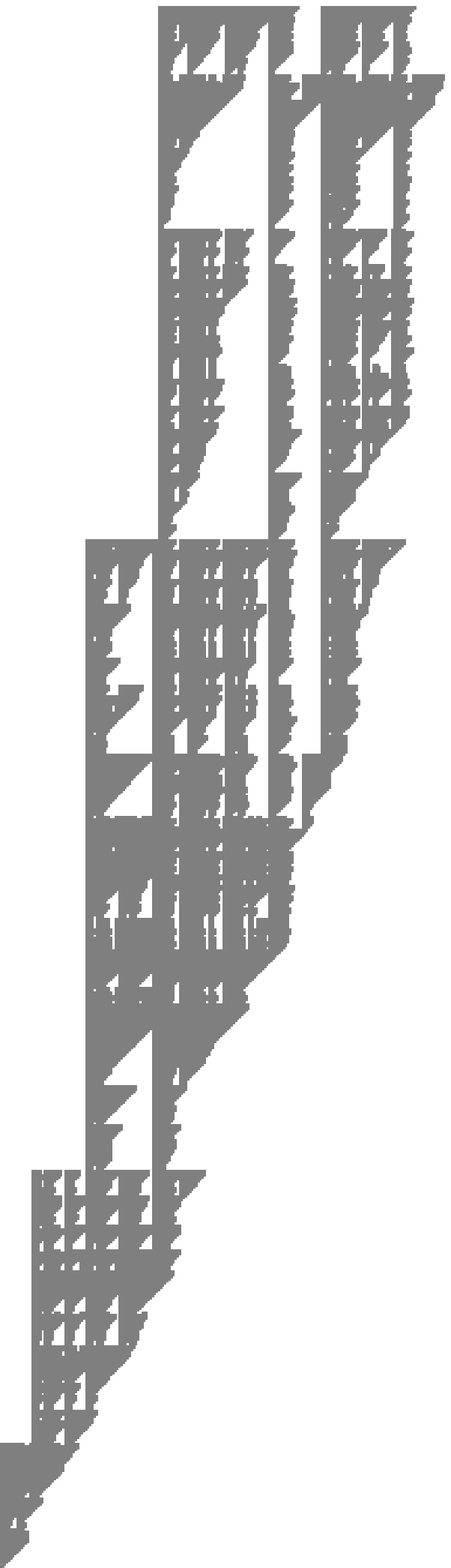}
\includegraphics[scale=0.25]{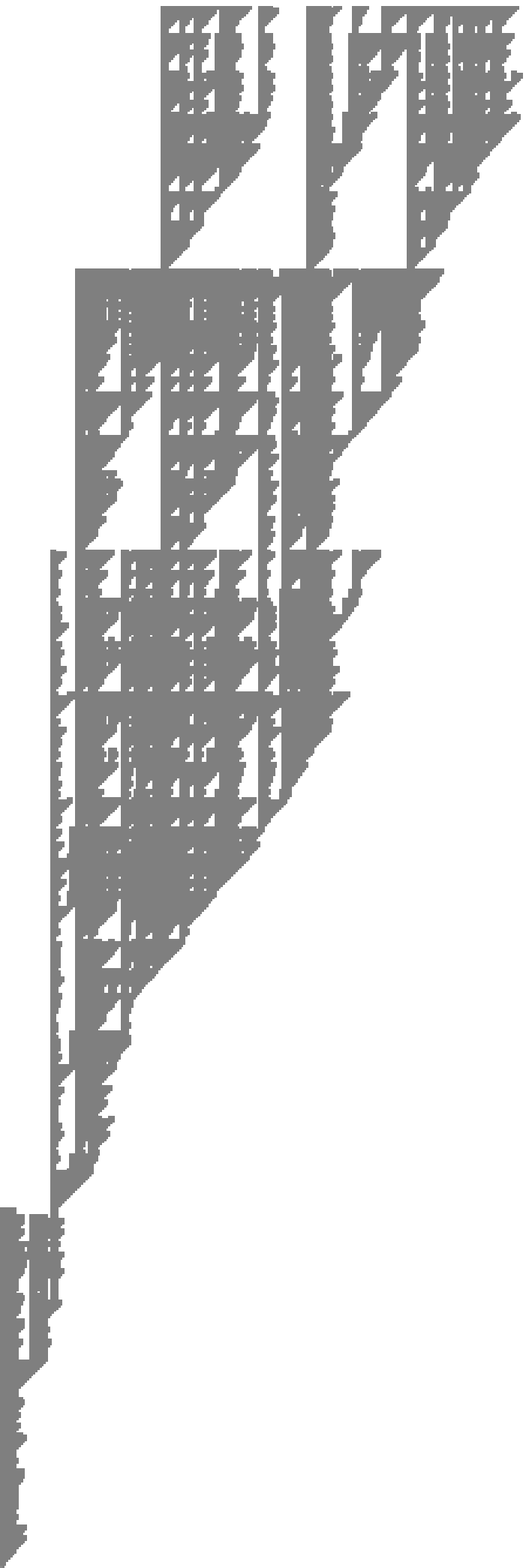}
\includegraphics[scale=0.25]{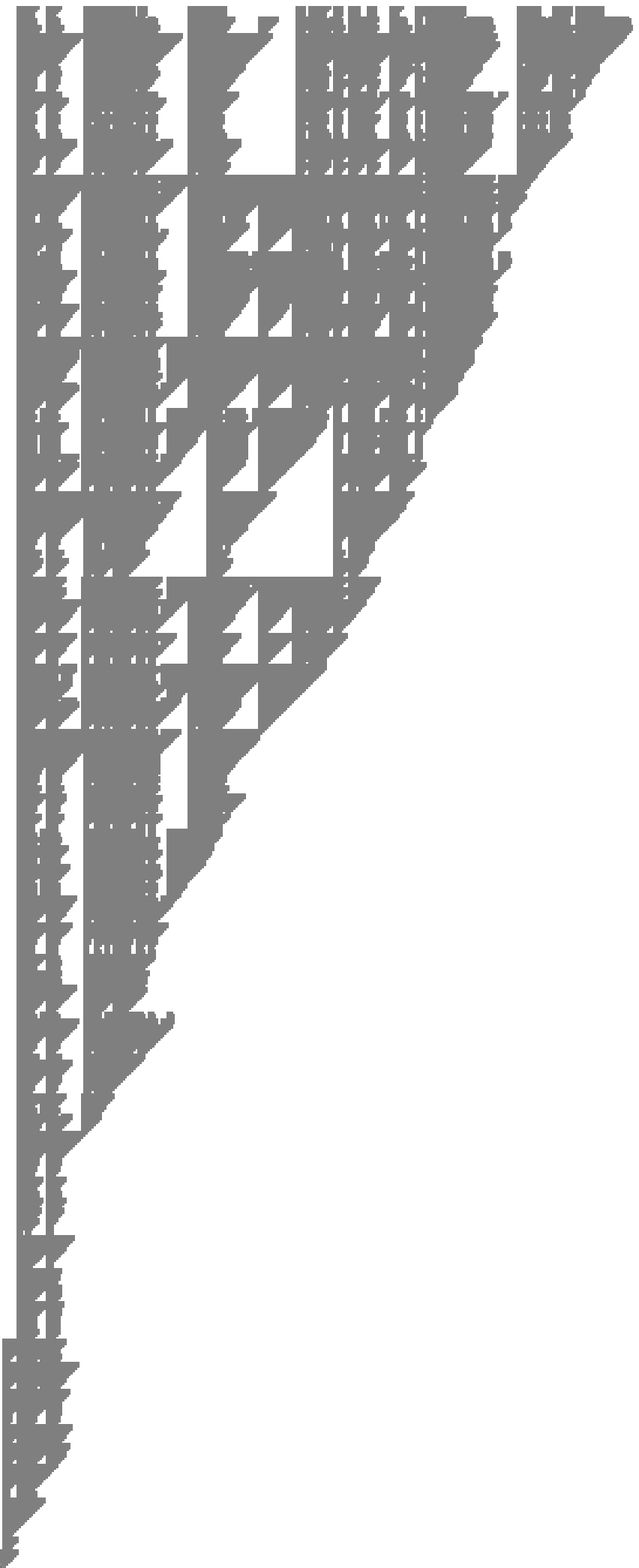}
\includegraphics[scale=0.25]{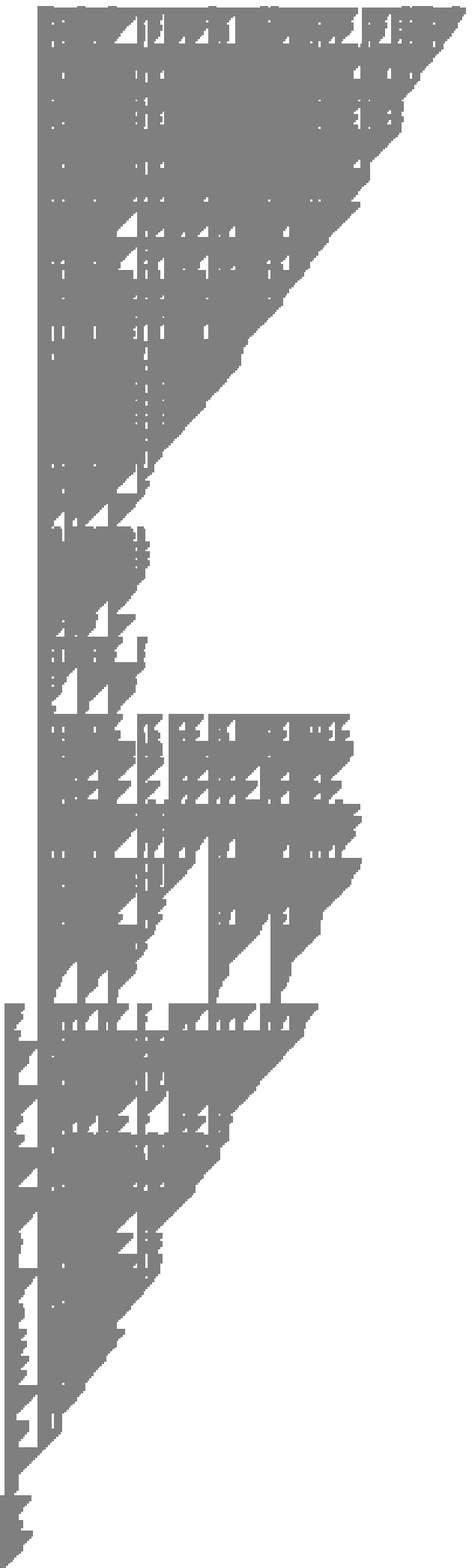}
\caption{Simulated samples of the open oriented cluster for pair of 
geometric variables $(\zeta,\psi)$. 
Unusually large values along the $x$-axis create long vertical open 
segments, while large values along the $y$-axis produce drastic horizontal cuts.}
\label{fig1}
\end{figure}

\begin{definicao} \label{percolacao} A vertex $v = (v_1, v_2) \in \ZZ_+^2$ is called ``heavy'' if
$v_2 = 0$ or, when $v_2 \ge 1$, if $\psi_{v_2} > 0$.
\end{definicao}

\begin{definicao} \label{permit}
Given two weighted words $\zeta, \; \psi \in \Psi$ and the
associated percolation configuration $\omega_{\zeta,\psi}$, we say
that an infinite oriented path $\pi$ starting from the origin is
 {\it permitted} if for any pair of heavy vertices $u
= (u_1, u_2)$ and $v= (v_1, v_2)$ in $\pi$ we have $u_1 \neq v_1$.
\end{definicao}

\begin{figure}[tbp]
\centering
%\includegraphics[scale=0.60]{compat1A.eps}
%\quad\quad \quad\quad
\includegraphics[scale=0.60]{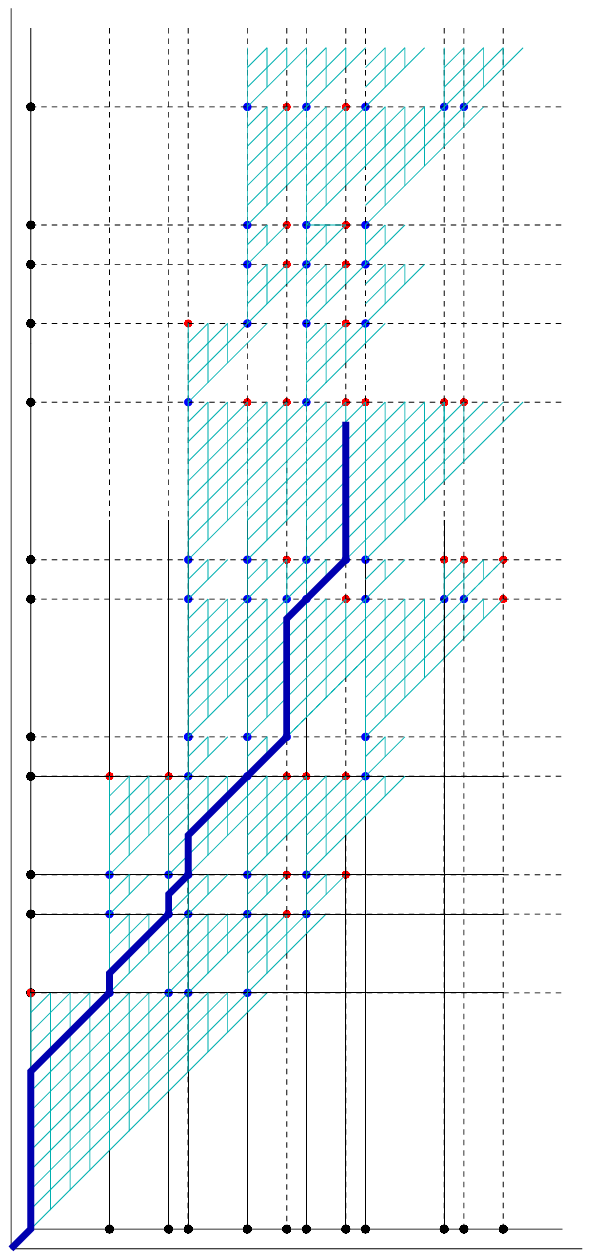}
\caption{Illustration of the open oriented cluster of the origin. Non-zero entries marked along the axes. Marked in bold a longest open permitted path.}
\label{fig1}
\end{figure}

\begin{lema}
\label{compatibility-lemma} Let $\zeta, \; \psi \in \Psi$. If there
exists an infinite open permitted path $\pi$ starting from the
origin for the percolation configuration $\omega_{\zeta, \psi}$,
then the pair $(\zeta,\psi)$ is compatible.
\end{lema}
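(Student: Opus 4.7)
The strategy is to produce a target $\chi \in \Psi$ together with approximating sequences $\zeta^{(k)}$ (obtained from $\zeta$ by finitely many $\triangle^{\mathbf{1}}$ operators at each stage) and $\psi^{(k)}$ (obtained from $\psi$ by finitely many $\triangle^{\mathbf{0}}$ operators at each stage) that both converge to $\chi$ in the product topology. Fix an infinite open permitted path $\pi = ((a_i, i))_{i \ge 0}$ starting at the origin, so that $a_0 = 0$, $a_i \ge 1$ for $i \ge 1$, and $a_{i+1} - a_i \in \{0,1\}$. I will treat the generic case $a_i \to \infty$; the case where $a_i$ eventually stabilizes at some $j^*$ is analogous, using that $\psi$ is then eventually $0$. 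For each $j \ge 1$ set $I_j = \{i \ge 1 : a_i = j\}$, a nonempty integer interval; openness of $\pi$ yields $\zeta_j \ge \psi_i$ for every $i \in I_j$, and the permitted condition implies that at most one $i \in I_j$ is heavy. Let $h_j$ denote such an $i$ when it exists and put $m_j = 1$; otherwise set $m_j = 0$.

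Define $\chi$ by scanning $j = 1, 2, \dots$ in order and emitting: the letter $\psi_{h_j}$ when $m_j = 1$; the letter $0$ when $m_j = 0$ and $\zeta_j = 0$; and nothing when $m_j = 0$ and $\zeta_j \ge 1$. Since $\zeta \in \Psi$ forces infinitely many zero entries, $\chi$ is an infinite word. To verify $\chi \in \Psi$, suppose $\chi_k, \chi_{k+1}$ are both positive; they arise from columns $j < j^+$ with $m_j = m_{j^+} = 1$ and no intervening emission. Then $\zeta_j \ge \psi_{h_j} \ge 1$, and $\zeta \in \Psi$ forces $\zeta_{j+1} = 0$. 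If $j + 1 < j^+$, column $j + 1$ emits a $0$ via the second clause, contradicting the gap; if $j + 1 = j^+$, then $m_{j+1} = 1$ requires $\zeta_{j+1} \ge 1$, again a contradiction.

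For the $\zeta$-side, I process $j = 1, 2, \dots$ in order. When $m_j = 1$, I apply $\triangle^{\mathbf{1}}$ at the current (shift-adjusted) position of column $j$ exactly $\zeta_j - \psi_{h_j}$ times to reduce its weight to $\psi_{h_j}$; when $m_j = 0$ and $\zeta_j \ge 1$, I apply $\triangle^{\mathbf{1}}$ exactly $\zeta_j$ times, the last application removing the letter and shifting later indices left by one. After processing up to column $k$, the resulting $\zeta^{(k)}$ agrees with $\chi$ on its first $N(k)$ coordinates, where $N(k) \to \infty$, so $\zeta^{(k)} \to \chi$ in the product topology. For the $\psi$-side, the positive entries of $\psi$ sit precisely at $\{h_j : m_j = 1\}$ (everywhere else $\psi_i = 0$), and I apply $\triangle^{\mathbf{0}}$ operators to remove selected zeros so that between consecutive heavy indices $h_j < h_{j^+}$ exactly $|\{j'' : j < j'' < j^+,\ \zeta_{j''} = 0\}|$ zeros survive, matching the structure of $\chi$ (and similarly for the initial segment before the first heavy index). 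Feasibility follows from the path inequality $h_{j^+} - h_j \ge j^+ - j$, since the vertical steps of $\pi$ dominate the horizontal ones, so enough zeros are available to remove; and no merge rule is ever triggered because $\chi \in \Psi$ leaves at least one zero between consecutive positives throughout the procedure, so each removed zero has a zero-neighbor at the moment of removal. This produces $\psi^{(k)} \to \chi$, and the compatibility of $(\zeta, \psi)$ follows from Definition~\ref{compat-weight}.

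The main delicate point I expect is the bookkeeping of index shifts under iterated applications of $\triangle^{\mathbf{1}}$ and $\triangle^{\mathbf{0}}$, together with the verification that each intermediate approximant stays in $\Psi$; both are handled by the scan-order processing above and by the structural property $\chi \in \Psi$ established in the second paragraph.
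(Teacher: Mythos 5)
Your proposal is correct and follows essentially the same route as the paper's proof: you locate the (at most one) heavy vertex per column of the permitted path, reduce $\zeta$ at heavy columns to the corresponding $\psi$-value, delete the intervening positive letters of $\zeta$ and the excess zeroes of $\psi$ (feasible because the path's vertical displacement dominates the horizontal one), and use ``a positive entry of $\zeta$ is followed by a zero'' to keep everything in $\Psi$ — exactly the mechanism of the paper. The only cosmetic difference is that you define the common limit word $\chi$ first and drive both approximating sequences toward it, whereas the paper constructs $\zeta^{(k)}$ and $\psi^{(k)}$ blockwise and checks they share growing prefixes.
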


\proof Assume that $\zeta$ and $\psi$ satisfy the
conditions in the statement, and let $\pi$ be an infinite open
permitted path starting from the origin. Let us first assume that $\psi$
contains infinitely many non-zero
entries. In this case, we consider the increasing sequence of
indices $\{\ell_j\}_{j=1}^{+\infty}$ that correspond to all non-zero
entries of $\psi$, i.e., $\psi_{i}\ge 1$ if and only if $i=\ell_j$
for some $j$. In particular $\ell_{j+1}> \ell_j +1$ for all $j\ge
1$. Let $\{ v_{\ell_j} = (x_{\ell_j}, \ell_j)\}_{j=1}^{+\infty}$ be
the corresponding sequence of heavy open vertices in
$\pi\setminus\{(0,0)\}$, and set $\ell_0=0, x_0=0, v^{(0)}=(0,0)$. Since $\pi$
is permitted, we have that $x_{\ell_j} < x_{\ell_{j'}}$ if $j < j'$.
Since $v_{\ell_j}$ is open we also have that $\zeta_{x_{\ell_j}}
\geq \psi_{\ell_j}$ for all $j \ge 1$. If $\pi'$ is an arbitrary
oriented path on $\mathcal{G}$, and $v = (v_1, v_2)$ and $u = (u_1,
u_2)$  are on $\pi'$, then $|u_2 - v_2| \ge |u_1 -v_1|$. In
particular for the open permitted path $\pi$ we have ${\ell_{j}} -
{\ell_{j-1}  \geq {x_{\ell_{j}}} - x_{\ell_j-1}}\ge 1$ for all $j
\ge 1$.

Denote by $j_{k,1} < \dots  < \; j_{k,m_k}$ the indices of non-zero
entries of $\zeta$ which lie strictly between $x_{\ell_{k-1}}$ and
$x_{\ell_{k}}$. This set could be empty, in which case set $m_k=0$.
Notice that $x_{\ell_{k}} - \sum_{s=1}^k m_s$ is strictly increasing
in $k$.
\medskip

Define:

\begin{equation}
 \zeta^{(1)} := \begin{cases} \big( [\triangle^{\mathbf{1}}_{j_{1,1}}]^{\zeta_{j_{1,1}}}
  \circ \dots  \circ[\triangle^{\mathbf{1}}_{j_{1,m_1}}]^{\zeta_{j_{1,m_1}}} \big)
  \circ[\triangle^{\mathbf{1}}_{x_{\ell_{1}}}]^{\zeta_{x_{\ell_1}} -  \psi_{\ell_1}} (\zeta),  &\mbox{ if } m_1 \geq 1, \notag
 \\
 [\triangle^{\mathbf{1}}_{x_{\ell_{1}}}]^{\zeta_{x_{\ell_1}} -  \psi_{\ell_1}} (\zeta),  &\mbox{ if } m_1 =0, \notag
 \end{cases}
\end{equation}
The action of $[\triangle^{\mathbf{1}}_{x_{\ell_{1}}}]^{\zeta_{x_{\ell_1}} -
\psi_{\ell_1}}$ on $\zeta$ decreases the value of
$\zeta_{x_{\ell_{1}}}$ to $\psi_{\ell_1}$, and the action of
$[\triangle^{\mathbf{1}}_{j_{1,1}}]^{\zeta_{j_{1,1}}}  \circ \dots
\circ[\triangle^{\mathbf{1}}_{j_{1,m_1}}]^{\zeta_{j_{1,m_1}}}$ deletes all
non-zero entries of $\zeta$ which precede $\zeta_{x_{\ell_{1}}}$,
{\it i.e.}

\begin{equation}
\zeta^{(1)}_j =
\begin{cases}
0, \; &\text{if} \; j = 1, \dots , x_{\ell_{1}} - m_1 -1 \\
\psi_{\ell_1}, &\text{if} \; j = x_{\ell_{1}} - m_1 \\
\zeta_{x_{\ell_{1}} + i} &\text{if} \; j = x_{\ell_{1}} - m_1 +i,
\quad i=1,2, \dots
\end{cases} \notag
\end{equation}
Now define
\begin{equation}
\psi^{(1)} : = [\triangle^{\mathbf{0}}_1]^{\ell_1 -  x_{\ell_1}+m_1}
(\psi), \notag
\end{equation}
i.e.
\begin{equation}
\psi^{(1)}_j =
\begin{cases}
0, \; &\text{if} \; j = 1, \dots , x_{\ell_{1}} - m_1 -1 \\
\psi_{\ell_1}, &\text{if} \; j = x_{\ell_{1}} - m_1 \\
\psi_{{\ell_{1}} + i} &\text{if} \; j = x_{\ell_{1}} - m_1 +i, \quad i=1,2, \dots
\end{cases} \notag
\end{equation}
Thus, $\zeta^{(1)}_j = \psi^{(1)}_j $  for $j=
1, \dots, x_{\ell_{1}} - m_1$. One should notice that
$\psi^{(1)}$ and $\zeta^{(1)}$ are both elements
of $\Psi$; for this we recall that $\zeta_{x_{\ell_1}} \ge
\psi_{\ell_1} \ge 1$ and therefore $\zeta_{x_{\ell_1}+1}=0$, since
$\zeta \in \Psi$.

\noindent Assume to have constructed $\zeta^{(k)}$ and
$\psi^{(k)}$ in $\Psi$, and which satisfy the following
property:
\begin{align}
\zeta^{(k)}_j &=  \psi^{(k)}_j,  &{\text{if}}
\quad j \leq x_{\ell_{k}} - \sum_{s=1}^k m_s, \notag
\\
\zeta^{(k)}_j &= \zeta_{j + \sum_{s=1}^k m_s },
&{\text{if}} \quad j > x_{\ell_{k}} - \sum_{s=1}^k m_s, \label{p10}
\\
\psi^{(k)}_{j} & = \psi_{{j + \ell_{k}} + \sum_{s=1}^k m_s
 -   x_{\ell_{k}}     },   &{\text{if}}  \quad j > x_{\ell_{k}} - \sum_{s=1}^k m_s.  \notag
\end{align}
Recalling that $x_{\ell_{k}} - \sum_{s=1}^k m_s$ is strictly
increasing in $k$, we proceed as follows:

\noindent If $m_{k+1} \geq 1$, define
\begin{eqnarray*}
\zeta^{(k+1)}:=\,\big[ [\triangle^{\mathbf{1}}_{j_{k+1,1}- \sum_{s=1}^k
m_s}]^{\zeta_{j_{k+1,1}}}  \circ \dots
\circ[\triangle^{\mathbf{1}}_{j_{k+1,m_{k+1}}- \sum_{s=1}^k
m_s}]^{\zeta_{j_{k+1,m_{k+1}}}} \big] \notag
 \\
\; \circ \,[\triangle^{\mathbf{1}}_{x_{\ell_{k+1}} - \sum_{s=1}^k m_s
}]^{\zeta_{x_{\ell_{k+1}}} -  \psi_{\ell_{k+1}}}
(\zeta^{(k)})\,,
\end{eqnarray*}
\noindent while if  $m_{k+1} = 0$, set
\begin{equation*}
%\text{while if  } \;\; \; \quad \quad \quad \; &\notag
 %\\
\zeta^{(k+1)} := \,  [\triangle^{\mathbf{1}}_{x_{\ell_{k+1}} -
\sum_{s=1}^k m_s }]^{\zeta_{x_{\ell_{k+1}}} -  \psi_{\ell_{k+1}}}
(\zeta^{(k)})\,.
\end{equation*}
\noindent On the other hand, set
\begin{equation*}
 \psi^{(k+1)}: = \,[\triangle^{\mathbf{0}}_{x_{\ell_{k}} - \sum_{s=1}^k m_s +1}]^{\ell_{k+1} -
 x_{\ell_{k+1}}+m_{k+1}} (\psi^{(k)})\,.
\end{equation*}
The action of the operator  $[\triangle^{\mathbf{1}}_{x_{\ell_{k+1}} -
\sum_{s=1}^k m_s  }]^{\zeta_{x_{\ell_{k+1}}} -  \psi_{\ell_{k+1}}}$
on $\zeta^{(k)}$ decreases the value of $
\zeta^{(k)}_{x_{\ell_{k+1}} - \sum_{s=1}^k m_s }$  from
$\zeta_{x_{\ell_{k+1}}}$ to $\psi_{\ell_{k+1} }$, and the action of
the operator $[\triangle^{\mathbf{1}}_{j_{k+1,1}- \sum_{s=1}^k
m_s}]^{\zeta_{j_{k+1,1}}} \circ \dots
\circ[\triangle^{\mathbf{1}}_{j_{k+1,m_{k+1}}- \sum_{s=1}^k
m_s}]^{\zeta_{j_{k+1,m_{k+1}}}}$ eliminates all non-zero entries of
$\zeta^{(k)}$ which lie strictly between indices
$x_{\ell_{k}} - \sum_{s=1}^k m_s$ and $x_{\ell_{k+1}} - \sum_{s=1}^k
m_s$.

As in the case $k=1$, we can check that both sequences $
\zeta^{(k+1)}$ and $\psi^{(k+1)}$ are in $\Psi$ and
satisfy properties of (\ref{p10}) with $k$ replaced by $k+1$.
Proceeding recursively, we get that the following limits exist in
$\Psi$:
$$
\zeta^{(\infty)} := \lim_{k \to \infty}
\zeta^{(k)}, \quad \text{and} \quad
\psi^{(\infty)}:= \lim_{k \to \infty} \psi^{(k)}
$$
and $\zeta^{(\infty)} = \psi^{(\infty)}$, which
implies compatibility of $\zeta$ and $\psi$.

When $\psi$
has only finitely many non-zero entries,  $\psi^{(k+1)}=\psi^{(k)}$ for all $k$
large enough, after which one deletes the following ones in $\zeta^{(k)}$.
\endproof

\noindent Given $k\in\NN$ and $\psi\in \Psi$, let
\begin{equation}
i_k(\psi)=
\begin{cases}
\min\{n\in\NN\colon \; \psi_n\geq k \}, &\text{if such $n< + \infty$ exists,}
\\
+\infty, \quad  &\text{if $\psi_n < k$ for all $n \ge 1$}.
\end{cases}
\end{equation}

\begin{definicao} \label{Mspaced}
Let $M\ge 2$ be an integer. A sequence $\psi\in \Psi$ is called
$M$-spaced up to level $k$ if the following conditions are
satisfied:
\begin{align}
&a) & j-i&\geq M^{\min\{\psi_i,\psi_j\}}, \; \text{ for all }\,1
\leq i<j,\text {  both in } \NN, \label{10k}
\\
&b_k) &  i_j(\psi)&\geq M^{j},\;\text{ for all  } \,j \le k. \label{9k}
\\
&\, & \, & \notag
\\
\!\!\!\! \, \text {A sequence }&\psi\in
\Psi & &\!\!\!\!\!\!\!\!\!\!\!\!\!\!\!\!\!\mbox {is called M-spaced when a) holds and $b_k$) is replaced by $b^{\prime}$): } \notag
\\
&\, & \, & \notag
\\
&b^{\prime}) & i_j(\psi)&\geq M^{j},\;\text{ for all  } \,j \geq 1.
\label{9}
\end{align}
\end{definicao}
\noindent We denote
\begin{align}
\Psi_M^{k} & :=\{\xi\in \Psi \colon \xi\text{ is $M$-spaced up to level $k$}\}, \notag
\\
\Psi_M & :=\{\xi\in \Psi \colon \xi\text{ is $M$-spaced}\}. \notag
\end{align}
\begin{definicao} \label{zeta}
Let $L\geq 2$ be an integer. Let $\zeta{(L)} \in \Psi_L$ be the
sequence whose $j$-th entry, $j \ge 1$, is
given by:
\begin{equation}
\label{hierarquica} (\zeta{(L)})_j :=\begin{cases} k,\; &\mbox{ if }\;
L^k|j\ \mbox{ but }\ L^{k+1}\nmid j\,, \\ 0, &\mbox{ if } \; L\nmid
j.
\end{cases}
\end{equation}
\end{definicao}

We now state the main result of this section.

\begin{teorema}
\label{weight} Let $L\geq 2$ and  $M\ge 3 (L+1)$ be integers,
$\psi \in \Psi_M$, and $\zeta{(L)} $
given by (\ref{hierarquica}). Then the configuration $\omega_{\zeta{(L)},\psi}$ defined in \eqref{25aug5}
has an infinite open permitted path $\pi$ starting from the origin.
\end{teorema}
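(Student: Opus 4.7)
The plan is to translate the percolation statement into a combinatorial problem about choosing columns at heavy rows of $\psi$. Let $\ell_1 < \ell_2 < \cdots$ denote the non-zero positions of $\psi$ and set $k_j := \psi_{\ell_j}$. Since $\zeta{(L)}_n \geq k$ iff $L^k \mid n$, a vertex $(c,\ell_j)$ is open iff $L^{k_j} \mid c$. It therefore suffices to produce a strictly increasing sequence $0 = c_0 < c_1 < c_2 < \cdots$ with (i) $L^{k_j} \mid c_j$, and (ii) $c_j - c_{j-1} \leq \ell_j - \ell_{j-1}$. Given such a sequence, the path $\pi$ passing through every $(c_j,\ell_j)$ and interpolating between consecutive heavy rows by any mixture of $(c_j-c_{j-1})$ NE-steps and $(\ell_j-\ell_{j-1}-(c_j-c_{j-1}))$ N-steps is open (intermediate rows carry weight $0$ and thus impose no openness condition), permitted (heavy vertices receive strictly increasing first coordinates), infinite, and starts at the origin.

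I would build $(c_j)$ by induction. The greedy choice --- $c_j$ the smallest multiple of $L^{k_j}$ strictly above $c_{j-1}$ --- advances the column by at most $L^{k_j}$, and by property (a) of Definition~\ref{Mspaced} this respects (ii) whenever $k_j \leq k_{j-1}$, since then $\ell_j - \ell_{j-1} \geq M^{k_j} \geq L^{k_j}$. The genuine obstacle, where the hypothesis $M \geq 3(L+1)$ has to do the work, is an upward weight-jump $k_j > k_{j-1}$: property (a) only yields $\ell_j - \ell_{j-1} \geq M^{k_{j-1}}$, which may be strictly less than $L^{k_j}$, and no multiple of $L^{k_j}$ need then lie in the feasibility window $(c_{j-1}, c_{j-1} + \ell_j - \ell_{j-1}]$.

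To absorb such jumps I would enhance the greedy by a lookahead: $c_{j-1}$ is chosen not merely as the smallest valid multiple of $L^{k_{j-1}}$, but also in the correct residue class modulo $L^{k_j}$ --- i.e., already aligned for the upcoming jump. Feasibility rests on a geometric-series slack estimate: by property (a), for each $s \ge 1$ the number of weight-$s$ entries of $\psi$ in $[1,\ell_j]$ is at most $\ell_j/M^s + 1$, while only finitely many $s$ contribute since weight-$s$ entries with $M^s > \ell_j$ do not exist by (b'). Summing yields
\[
c_j \;\leq\; \sum_{i=1}^{j} L^{k_i} \;\leq\; \frac{L}{M-L}\,\ell_j \;+\; O\!\left(\ell_j^{\log_M L}\right).
\]
With $M \geq 3(L+1)$ the leading coefficient $L/(M-L)$ is strictly below $\tfrac{1}{2}$, and combined with $\ell_j \geq M^{k_j}$ from (b') the slack $\ell_j - c_j$ comfortably dominates $L^{k_j}$ at every transition, because $(M/L)^{k_j} \geq 3$. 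This slack is what lets one shift $c_{j-1}$ --- using the surrounding rows of weight $0$ where column changes are unrestricted --- into whichever residue class mod $L^{k_j}$ is needed.

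The technical heart of the argument is the propagation of the lookahead invariant through nested chains of consecutive weight-jumps, where alignment demands at different future levels may appear to conflict. I expect this bookkeeping --- formulating an invariant simultaneously controlling alignment at every active level and verifying it survives induction --- to be the main obstacle; the precise constant $3(L+1)$ is engineered so that the slack estimate above leaves a factor-$\tfrac{1}{2}$ margin and simultaneously $M/L \geq 3$, which together make the realignment fit into the available free rows at each step.
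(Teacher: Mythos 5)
Your reduction of the theorem to the combinatorial problem (choose columns $c_j$ at the heavy rows with $L^{k_j}\mid c_j$, $c_{j-1}<c_j\le c_{j-1}+(\ell_j-\ell_{j-1})$) is correct and is also the picture underlying the paper, and your slack estimate parallels the paper's bounds. But the proof has a genuine gap exactly where you say you "expect the main obstacle'' to be: the lookahead invariant that simultaneously controls alignment at every active level is never formulated, let alone shown to survive the induction, and this is the actual content of the theorem. The difficulty is not merely bookkeeping. Between a weight-$k_{j-1}$ row and the next weight-$k_j$ row with $k_j>k_{j-1}$, property (a) only guarantees $M^{k_{j-1}}$ free rows, while realignment modulo $L^{k_j}$ may require a lateral displacement of order $L^{k_j}\gg M^{k_{j-1}}$ (e.g. $L=100$, $M=303$, $k_{j-1}=1$, $k_j=2$). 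So the alignment cannot be arranged "just before'' the jump; it must be anticipated arbitrarily far back, propagated through intervening heavy rows that impose their own divisibility constraints, and nested jumps impose residue conditions at several moduli at once. Your global estimate $c_j\le\sum_i L^{k_i}\lesssim \frac{L}{M-L}\,\ell_j$ also only applies to the plain greedy; once realignment moves are inserted it must be re-derived, and by itself it says nothing about the simultaneous satisfiability of the cascading residue constraints.

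The paper resolves precisely this point by abandoning the single-path viewpoint: instead of one column with lookahead, it tracks the \emph{entire} set $V_{\langle\zeta,\psi\rangle}(i_k(\psi)-1)$ of columns reachable by open permitted paths, and proves by induction on the level $k$ (Propositions \ref{c1} and \ref{st}) that this set is a discrete segment whose endpoints are controlled by $\sum_{r=1}^{k-1}(L^r+1)\lfloor (i_k(\psi)-1)/M^r\rfloor$; combined with $i_k(\psi)\ge M^k$ and Lemma \ref{buraco} (this is where $M\ge 3(L+1)$ enters), the segment just below any weight-$k$ row has length at least $L^k$, hence contains a multiple of $L^k$ in every needed position — which is exactly what dissolves the conflicting-alignment problem you identify. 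The induction step is then closed using the exact $L^k$-periodicity of $\zeta(L)$ below level $k$ (Lemma \ref{cl33}), which lets the segments emanating from the different multiples of $L^k$ be translates of one another and, by the overlap inequality, concatenate into a single segment. To complete your argument you would need an invariant of comparable strength (in effect, that all columns in a long interval are reachable, not just one suitably aligned column), so as written the proposal stops short of a proof.
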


%\noindent {\bf Remark.} It can be seen that if $M\geq 1.62 \, L$,
%then for any $\psi \in \Psi_M$ the configuration
%$\omega_{\zeta{(L)},\psi}$ contains an infinite open cluster of the
%origin. However there might be no infinite permitted path. This fact
%can be proven along the same pattern of the proof of Theorem
%\ref{weight} given below, with the pivotal Claim \ref{c1}. We omit
%it. This fact is not needed for our main results. \maltese CONTROLAR
%\medskip

\noindent The proof of Theorem \ref{weight} will follow from a sequence of technical
statements, the most important is given in Proposition \ref{c1}, whose proof, in turn, relies
on that of Proposition \ref{st}.

\medskip

\begin{remark}
\label{finito} For the proof of Theorem \ref{weight} (see below) it
is enough to treat the case when $i_k(\psi)<+\infty$ for all
$k\ge 1$. For this reason, in the next statements we shall assume
that all featuring $i_k(\psi)$ are finite.
\end{remark}

\medskip

\noindent For every $m \in \NN$ and $\xi\in \Psi$, let $\theta_m \xi
\in \Psi$ denote the shifted sequence given by $(\theta_m \xi)_j :=
\xi_{j+m}, \; j \geq 1$. Set $\theta_0 \xi=\xi$. From Definition
\ref{zeta} it follows that for every  $m$ and $n \in \NN$, the
sequence $\theta_{mL^n} \zeta{(L)}$ satisfies, for all $j \ge 1$:
\begin{equation}
(\theta_{mL^n} \zeta{(L)})_{j} \;
\begin{cases}
= 0 \; &\mbox{ if }   L\nmid j,
\\
= n' \;   &\mbox{ if }  L^{n'}\mid j \; \text{but} \; L^{n'+1}\nmid
j, \mbox{ for }    1 \le n' < n,
\\
\ge n & \mbox{ if }  L^{n}\mid j.
\end{cases} \label{A1}
\end{equation}

\noindent {\bf{Remark.}} The sequence  $\theta_{L^k} \zeta{(L)}$ is
not  in $\Psi_L$, since the inequality (\ref{9}) is violated.
However $\theta_{mL^k} \zeta{(L)} \in \Psi_L^{k}$ for any $m \in
\mathbb{N}$.

\begin{proposition} \label{p1} If
$\psi \in \Psi_M^k$ for some $k \in \mathbb{N}$, and $i \ge 1$ is
such that $\psi_i = m \ge 1 $, then $\theta_i \psi \in \Psi_M^m$. In
particular, if $\psi \in \Psi_M$, then $\theta_{i_k(\psi)} \psi \in
\Psi_M^{\psi_{i_k(\psi)}}$.
\end{proposition}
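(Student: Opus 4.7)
The plan is essentially a direct unpacking of the two defining conditions of $\Psi_M^k$ against the shifted sequence $\theta_i\psi$. Nothing beyond the definitions of $\Psi_M^k$, $\Psi_M$ and $i_j(\cdot)$ (and the relation $\psi_i=m$) should be needed; the argument is bookkeeping.

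First I would verify that condition \textbf{a)} of Definition \ref{Mspaced} is inherited by any shift, independently of the level $k$. Given $1\le i'<j'$, I write $(\theta_i\psi)_{i'}=\psi_{i+i'}$ and $(\theta_i\psi)_{j'}=\psi_{i+j'}$, then apply a) for $\psi$ to the pair $i+i'<i+j'$. The differences match, $(i+j')-(i+i')=j'-i'$, so \textbf{a)} for $\theta_i\psi$ follows at once, and this has nothing to do with $k$ or $m$.

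The substantive point is condition $b_m)$ for $\theta_i\psi$, i.e. $i_j(\theta_i\psi)\ge M^j$ for all $j\le m$. I would argue by contradiction / direct lower bound: suppose $n\ge 1$ satisfies $(\theta_i\psi)_n=\psi_{n+i}\ge j$ for some $j\le m$. Apply property \textbf{a)} of $\psi$ to the positions $i<n+i$ to get
\begin{equation*}
n=(n+i)-i\ \ge\ M^{\min\{\psi_i,\,\psi_{n+i}\}}\ =\ M^{\min\{m,\,\psi_{n+i}\}}\ \ge\ M^{\min\{m,j\}}\ =\ M^{j},
\end{equation*}
the last equality using $j\le m$. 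This is exactly the lower bound $i_j(\theta_i\psi)\ge M^j$ required by $b_m)$. Note that property \textbf{a)} of $\psi$ is available here even though $\psi$ is only assumed $M$-spaced \emph{up to level $k$}, because a) is the level-free part of the definition.

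For the ``In particular'' clause, I would just remark that $\Psi_M\subset \Psi_M^k$ for every $k\in\NN$, apply the first assertion with $i=i_k(\psi)$ and $m=\psi_{i_k(\psi)}\ge k$, and conclude $\theta_{i_k(\psi)}\psi\in\Psi_M^{\psi_{i_k(\psi)}}$. The only conceivable obstacle is making sure the hypothesis $j\le m$ is used at the right moment so that one can replace $\min\{m,\psi_{n+i}\}$ by $j$ in the exponent; once that is noticed, the proof is immediate.
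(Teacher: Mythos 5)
Your proof is correct and follows essentially the same route as the paper's: property \textbf{a)} for $\theta_i\psi$ is a direct consequence of a) for $\psi$ applied to the shifted indices, and property $b_m$) follows from a) applied to the pair $i < n+i$ together with $\psi_i=m$. Your explicit observation that only a) of $\Psi$, never $b_k$), is invoked — and your uniform handling of the case $\psi_{n+i}>m$ via the $\min$ — is if anything slightly cleaner than the paper's exposition, but the underlying argument is the same.
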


\begin{proof} To verify inequality (\ref{10k}) for $\theta_i \psi$ observe that since $\psi \in \Psi_M^k$ and if $j > j^\prime$, then
$$
M^{\min\{(\theta_i \psi)_j, (\theta_i \psi)_{j^\prime}\}} = M^{\min\{\psi_{i+j},\psi_{i+j^\prime}\}} \leq j - j^\prime.
$$
On the other hand, if $j \ge 1$ is such that $(\theta_i \psi)_j \leq
m$, then  we have:
$$
M^{(\theta_i \psi)_j} = M^{\psi_{j+i}} = M^{\min\{\psi_{i+j},\psi_{i}\}} \leq i+j -i = j,
$$
and thus $\theta_i \psi$ satisfies (\ref{9k}).
\end{proof}

\noindent {\bf{Notation.}} Since the parameter $L \geq 2$ is fixed, we will omit it from the notation when this will
bring no confusion.

\medskip
\noindent Define the horizontal slab $\mathbb{S}_{[i,j]} := \{v = (v_1, v_2) \in \mathbb{Z}_+
\colon\, i \le v_2 \le j  \}$.

\begin{lema}\label{cl33}
Let $\zeta=\zeta{(L)}$ be as in Definition \ref{zeta}. For any $k\in
\ZZ_+$,  $m \in \mathbb{N}$ and $\psi \in \Psi$, if $i_k=i_k(\psi)$,
\begin{equation} \label{A2}
{\mathcal{C}}_{\langle \theta_{mL^{\psi_{i_k}}}\zeta,\psi \rangle}\cap
\mathbb{S}_{[0,i_k]} = {\mathcal{C}}{_{\langle\zeta,\psi\rangle}}\cap
\mathbb{S}_{[0,i_k]},
\end{equation}
in particular, for any $m$, $m' \in \mathbb{N}$
\begin{equation} \label{AA2}
{\mathcal{C}}_{\langle\theta_{mL^{\psi_{i_k}}}\zeta,\psi\rangle}\cap
\mathbb{S}_{[0,i_k]} = {\mathcal{C}}_{\langle\theta_{m'L^{\psi_{i_k}}} \zeta ,\psi\rangle}\cap
\mathbb{S}_{[0,i_k]}.
\end{equation}
\end{lema}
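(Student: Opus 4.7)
The plan is to show that the percolation configurations $\omega_{\zeta,\psi}$ and $\omega_{\theta_{mL^{\psi_{i_k}}}\zeta,\psi}$ agree on every vertex of the slab $\mathbb{S}_{[0,i_k]}$. Since $\mathcal G$ is oriented to the NE, any open path from the origin that reaches a vertex $v=(v_1,v_2)$ with $v_2\le i_k$ consists entirely of vertices with second coordinate at most $i_k$. Therefore the restriction of the cluster of the origin to $\mathbb{S}_{[0,i_k]}$ depends only on the vertex configuration inside that slab, and the pointwise agreement will immediately give \eqref{A2}; applying this equality with two different values $m,m'$ will yield \eqref{AA2}.

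Two preliminary identifications make the slab-wise comparison transparent. First, the definition of $\zeta=\zeta(L)$ can be read as $\zeta_j = v_L(j)$, the $L$-adic valuation of $j$, for every $j\ge 1$. Second, by the very definition of $i_k=i_k(\psi)$ one has $\psi_n<k\le \psi_{i_k}$ for all $n<i_k$, whereas at $n=i_k$ equality holds trivially; in particular $\psi_{v_2}\le \psi_{i_k}$ for every $v_2\le i_k$, which is the key monotonicity that lets a single shift parameter $mL^{\psi_{i_k}}$ control every level of the slab at once.

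With these in hand, I would verify pointwise agreement by a two-case analysis, using the ultrametric property of $v_L$. Write $a:=mL^{\psi_{i_k}}$, so $v_L(a)\ge \psi_{i_k}$, and fix $v=(v_1,v_2)$ with $v_1\ge 1$ and $1\le v_2\le i_k$ (the axis vertices are handled by the definition of $\omega_{\cdot,\psi}$, which forces openness to fail there regardless of $\zeta$). If $v_L(v_1)<\psi_{i_k}$, the non-Archimedean identity gives $v_L(v_1+a)=v_L(v_1)$, so the condition $v_L(\cdot)\ge \psi_{v_2}$ defining openness is word-for-word the same in both configurations. If instead $v_L(v_1)\ge \psi_{i_k}$, then both $v_1$ and $v_1+a$ are divisible by $L^{\psi_{i_k}}$, hence both valuations exceed $\psi_{i_k}\ge \psi_{v_2}$, and $v$ is open in both configurations. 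In either case $\omega_{\zeta,\psi}(v)=\omega_{\theta_a\zeta,\psi}(v)$, and the lemma follows.

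The only point requiring mild caution is the case $v_L(v_1)\ge \psi_{i_k}$, where $v_L(v_1+a)$ cannot be pinned down exactly because of possible carries in base $L$; however the argument does not need the exact valuation but only the lower bound, which comes for free from divisibility. The monotonicity $\psi_{v_2}\le \psi_{i_k}$ established above is what allows this single lower bound $\psi_{i_k}$ to dominate every openness threshold encountered inside the slab, and is the structural reason behind the choice of shift $mL^{\psi_{i_k}}$ rather than any smaller power of $L$.
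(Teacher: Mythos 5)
Your proof is correct and follows essentially the same route as the paper: you reduce to showing pointwise agreement of the two configurations inside the slab (valid because the graph is oriented northeast, so clusters restricted to $\mathbb{S}_{[0,i_k]}$ only see the configuration there), then split on whether $L^{\psi_{i_k}}$ divides $v_1$, using the $L$-adic-valuation reading of $\zeta(L)$ together with the monotonicity $\psi_{v_2}\le\psi_{i_k}$ for $v_2\le i_k$. Your phrasing in terms of the ultrametric property of $v_L$ is a slight repackaging of the paper's appeal to \eqref{A1}, and your two-case split (by the size of $v_L(v_1)$ rather than by whether $v$ is open) is marginally cleaner, but the content is identical; the only nitpick is that the origin is open, not closed, by fiat, though this does not affect the argument since the axis configuration is independent of $\zeta$ either way.
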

\begin{proof}
Since the origin is the unique open vertex along the axes, to prove
\eqref{A2} it suffices to consider vertices $ v = (v_1, v_2)$ with
$1\le v_1,1\le v_2 \le i_k$, and to show that $\zeta_{v_1} \ge
\psi_{v_2}$ if and only if $\zeta_{v_1+mL^{\psi_{i_k}}} \ge
\psi_{v_2}$.

Let us first assume that $\zeta_{v_1}  \geq \psi_{v_2}$. By the
first two equalities of (\ref{A1}) we have that $\zeta_{v_1 +
mL^{\psi_{i_k}}}=\zeta_{v_1}$ whenever $v_1$ is not multiple of
$L^{\psi_{i_k}}$. If $v_1$ is multiple or $L^{\psi_{i_k}}$, then by
the third inequality of (\ref{A1}) we have that $\zeta_{v_1 + m
L^{\psi_{i_k}}} \ge \psi_{i_k}$. Now we use the fact that $v_2 \leq
i_k$, which implies that $\psi_{v_2} \le \psi_{i_k}$,  due to the
definition of $i_k$. Thus we have $\zeta_{v_1 + m L^{\psi_{i_k}}}
\ge \psi_{v_2}$ in both cases.

On the other hand, if  $\zeta_{v_1} < \psi_{v_2}$  and $v_2 \le
i_k$, we get $\zeta_{v_1} < \psi_{v_2} \le \psi_{i_k}$. This implies
that $v_1$ cannot be multiple of $L^{\psi_{i_k}}$. Thus, as just
observed, $\zeta_{v_1 + mL^{\psi_{i_k}}}  = \zeta_{v_1}  < \psi_{v_2} $.

As already explained, this proves \eqref{A2}. Equality \eqref{AA2}
follows immediately, concluding the proof of Lemma \ref{cl33}.
\end{proof}

\medskip

\noindent The set $I$ is called {\it horizontal discrete segment} if
the elements of the set $I$ are vertices of $\ZZ^2_+$ whose first
coordinates are consecutive integers and the second coordinate is
the same for all elements. We denote by $l(I)$ and $r(I)$
respectively the value of the abscissa coordinate of the left and
right endpoints of $I$.

\medskip

\noindent For $\zeta,\psi \in \Psi$ we set:
\begin{equation}
\label{permitido} V_{\langle \zeta^{},\psi \rangle } (k)  :=  \{ v
\in \ZZ_+\times\{k\}:
\exists \text{ open permitted path } \pi \text{ from the origin to } v \}
\end{equation}

\begin{remark} \label{trivial1}
Let $M\ge 2$ and $\psi \in \Psi_M^1$. Since $i_1(\psi)\ge M\ge 2$ and
$\psi_i=0$ for $i<i_i(\psi)$, all the vertices in
$\{(v_1,v_2)\colon 1\le v_1, v_2\le i_1(\psi)-1\}$ are open in the configuration
$\omega_{\zeta,\psi}$, for any $\zeta\in \Psi$. It easily follows that
\begin{equation}
\label{trivial}
 V_{\langle \zeta,\psi \rangle } (i_1 (\psi)-1) = \{(j,i_1(\psi)-1);\
j=1,\dots ,i_1(\psi)-1\}.
\end{equation}
for all $\psi \in \Psi_M^1, \zeta \in \Psi$. (Indeed, any open
oriented path connecting the origin to the set $\{(j,i_1(\psi)-1);\ j=1,\dots ,i_1(\psi)-1\}$
will also be permitted.) \end{remark}

The next proposition summarizes some basic properties of
$V_{\langle \zeta^{},\psi \rangle}(k)$ in the case of interest for
Theorem \ref{weight}.

\noindent {\bf Remark.} Throughout we adopt the usual convention $\sum_{r=1}^0 a_r=0$ for $a_r\in \mathbb{R}$.)

\medskip

\begin{proposition} \label{c1} Let $\zeta=\zeta{(L)}$ be as in Definition \ref{zeta} and $\psi \in \Psi_M^k$,
with $M \ge 3(L+1)$ and $k \ge 1$. For any $m \in \ZZ_+$ the
following holds:
\begin{align}
&{\it i)} &  &  V_{\langle \theta_{mL^{k}}\zeta^{},\psi \rangle } (i_k (\psi)-1)  \; \; \mbox{  is a
discrete segment. } \notag
\\
& {\it ii)} & & l\big(V_{\langle \theta_{mL^{k}}\zeta^{},\psi
\rangle } (i_k (\psi)-1)\big ) \leq \max\left\{\sum_{r=1}^{k-1}(L^r
+1) \big \lfloor \frac{i_k(\psi)-1}{M^r} \big \rfloor, 1\right\} .
\notag
\\
& {\it iii)} && r\big(V_{\langle \theta_{mL^{k}}\zeta^{},\psi
\rangle} (i_k (\psi)-1)\big) \ge i_k(\psi)-
\max\left\{\sum_{r=1}^{k-1}(L^r +1) \big \lfloor
\frac{i_k(\psi)-1}{M^r} \big \rfloor, 1\right\}. \notag
\end{align}

\end{proposition}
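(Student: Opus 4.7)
I would prove Proposition \ref{c1} by induction on $k$, tracking how the set $V(v_2)$ evolves as $v_2$ ascends through the heavy levels of $\psi$. Two facts drive the argument: between heavy levels the segment extends rightward by one unit per step, while at a heavy level of rank $r$ the open vertices are restricted to multiples of $L^r$; the hypothesis $M\ge 3(L+1)$ will ensure that such restricted sets re-merge into a single segment before the next heavy obstruction is encountered.

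\textbf{Reduction to $m=0$ and base case.} By (\ref{A1}), $\theta_{mL^k}\zeta$ and $\zeta$ agree at every index not divisible by $L^k$, and both take value $\ge k$ at multiples of $L^k$. Since $\psi_{v_2}\le k-1$ for $v_2\le i_k(\psi)-1$, the openness condition $\zeta_{v_1+mL^k}\ge\psi_{v_2}$ reduces to $L^{\psi_{v_2}}\mid v_1$, independently of $m$. Hence $V_{\langle\theta_{mL^k}\zeta,\psi\rangle}(v_2)$ does not depend on $m$ inside the slab $\mathbb{S}_{[0,i_k(\psi)-1]}$, and I may set $m=0$. The base case $k=1$ then follows directly from Remark \ref{trivial1}: since $\psi_{v_2}=0$ for $v_2<i_1(\psi)$, every vertex of the relevant box is open, the permitted constraint is vacuous, and the empty-sum convention collapses the asserted bounds into $\max\{0,1\}=1$.

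\textbf{Inductive step.} Assuming (i)--(iii) for $k-1$, applying the same truncation argument at heights $v_2\le i_{k-1}(\psi)-1$ (where $\psi_{v_2}\le k-2$) shows that the configuration there coincides with one of the form $\omega_{\theta_{m'L^{k-1}}\zeta,\psi}$, so the induction hypothesis furnishes a discrete segment $V(i_{k-1}(\psi)-1)=[a_{k-1},b_{k-1}]$ with $a_{k-1}$ and $i_{k-1}(\psi)-1-b_{k-1}$ each bounded by $\max\{\sum_{r=1}^{k-2}(L^r+1)\lfloor(i_{k-1}(\psi)-1)/M^r\rfloor,\,1\}$. I then propagate this segment upward through heights $i_{k-1}(\psi),\ldots,i_k(\psi)-1$, along which $\psi$ takes values in $\{0,1,\ldots,k-1\}$. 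By $M$-spacing (\ref{10k}) the number of heights $\le i_k(\psi)-1$ with $\psi_{v_2}\ge r$ is at most $\lfloor(i_k(\psi)-1)/M^r\rfloor$; each such rank-$r$ heavy level restricts the reachable set to multiples of $L^r$ inside the previously extended segment, at a cost of at most $L^r+1$ to each endpoint (the extra $+1$ absorbing the permitted-path constraint, which forbids re-use of a column already occupied by a prior heavy vertex). Summing the losses across $r=1,\ldots,k-1$ yields (ii) and (iii) at level $k$.

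\textbf{Main obstacle.} The delicate point --- where I expect the proof to invoke Proposition \ref{st} referenced in the excerpt --- is the verification of (i): that $V(i_k(\psi)-1)$ is a \emph{single} discrete segment rather than a union of several. Immediately after a rank-$r$ heavy level the reachable set splits into disjoint short sub-segments anchored at consecutive multiples of $L^r$, separated by gaps of size $L^r-1$; these sub-segments march rightward at unit speed and must re-merge before the next heavy obstruction. The hypothesis $M\ge 3(L+1)$ is calibrated precisely to provide the vertical slack for this healing to succeed at every rank $r\le k-1$, simultaneously with the bookkeeping of columns consumed by previously traversed heavy vertices along the candidate paths. Carrying out this merging argument while preserving the permitted-path constraint is the combinatorial heart of the proof.
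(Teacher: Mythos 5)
Your base case, the reduction showing independence of $m$ inside the slab (this is essentially Lemma \ref{cl33}), and the general bookkeeping of endpoint losses are consistent with the paper. But the proposal has a genuine gap exactly where you flag it: part (i), the single-segment property, is not proven, and this is not a detail one can defer — it is the content of Proposition \ref{st}, and moreover the induction is genuinely joint: the quantitative bounds (ii)--(iii) at level $k$ are precisely what is used (through the numerical Lemma \ref{buraco}, i.e. $a \ge \tilde L^k + 2a\sum_{j<k}(\tilde L/M)^j$ for $a\ge M^k$, $\tilde L=L+1$) to force consecutive candidate segments to overlap at the next rank-$k$ heavy row, which is how (i) at the next level is obtained. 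Without carrying this out, the "summing of losses" for (ii)--(iii) is also unjustified, since the endpoint accounting presumes the reachable set stays an interval and that paths can be concatenated while remaining permitted (the paper resolves the latter by concatenating only at heavy vertices, where permittedness is automatic).

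In addition, the mechanism you sketch for the merging is not the one that can work. You propose that after a rank-$r$ restriction the sub-segments, with gaps of size about $L^r-1$, "march rightward at unit speed and re-merge before the next heavy obstruction." But the $M$-spacing hypothesis only guarantees that the next row with $\psi\ge 1$ is at distance at least $M=3(L+1)$, which for $r$ large is far smaller than $L^r$; so the healing cannot in general complete before the next (lower-rank) obstruction, and a row-by-row, rank-by-rank sweep from $i_{k-1}(\psi)$ to $i_k(\psi)$ does not close. The paper's argument is instead hierarchical: between two consecutive rows where $\psi$ equals $k$, it applies the full level-$k$ statement to the configurations shifted to each multiple of $L^k$ inside the current segment (using Proposition \ref{p1} to see that $\theta_{j_1}\psi\in\Psi_M^k$), obtaining for each such anchor a full discrete segment at the next rank-$k$ row, and only then checks, via (ii)--(iii) and Lemma \ref{buraco}, that consecutive segments overlap so their union is one segment; iterating this hop over the at most $\lfloor (i_{k+1}(\psi)-1)/M^k\rfloor$ rank-$k$ rows gives (i)--(iii) at level $k+1$. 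Your proposal would need to be restructured along these lines (or an equivalent merging argument supplied) to constitute a proof.
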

%%%%%%%%%%%%%%%%%%%%%%%%%%%%%%%%%%%%%%%%%%%%%%%%%%%%%%%%%%
\medskip
\noindent {\it Proof of Proposition \ref{c1} (beginning)}

We shall proceed by induction. The statements are trivial in the case $k=1$, where indeed
equality holds, cf. Remark \ref{trivial1}.

\medskip

%\noindent $k=1$. Since $\psi \in \Psi^k_M \subset \Psi^1_M$ we have that $i_1\ge M$. Also, from the definition of $i_1(\psi)$
%it follows that $\psi_j =0$ for all  $1 \le j <  i_1(\psi)$. from which the first statement
%follows at once.

%Thus
%$$
%V_{\langle \theta_{mL^{k}}\zeta^{},\psi \rangle} (i_1 (\psi)-1)=\{(j,i_1(\psi)-1);\
%j=0,\dots ,i_1(\psi)-1\},
%$$
%{\it i.e.} $\VV_{\zeta{(L)}, \psi} (i_1 (\psi)-1)$ is a discrete
%segment. Since $\psi \in \Psi_M$ we have that $|\VV_{\zeta{(L)}, \psi}
%(i_1 (\psi)-1)| =  i_1 (\psi) \geq M$, so that
%,
%and the first statement in $i)$  is satisfied for $k=1$.

%\bigskip

\noindent By induction hypothesis assume now that
$V_{\langle \theta_{mL^{k}}\zeta^{},\psi \rangle} (i_k (\psi)-1)$  satisfies $i), \; ii),$ and
$iii)$ for $k$, and we proceed to the case $k+1$. For that we assume that $\psi \in \Psi^{k+1}_M$.
\medskip

\noindent If $i_{k+1}(\psi) = i_k(\psi)$, then $V_{\theta_{mL^{k+1}}\zeta^{},\psi} (i_{k+1} (\psi)-1) =
 V_{\theta_{mL^{k+1}}\zeta^{},\psi} (i_k (\psi)-1)$ and, therefore, $i), \; ii),$ and $iii)$ are satisfied.
 We thus consider the case $i_{k+1}(\psi) > i_k(\psi)$. The last inequality implies that $\psi_{i_k(\psi)}=k$.

%%%%%%%%%%%%%%%%%%%%%%%%%%%%%%%%%%%%%%%%%%%%%%%%%%%%%%%%%%%%%%%%%%%%%%%%%%%%%%%%%%%%%%%%%%%%%%%%%%%%%%%%%%%%%%%%%%%%%%%%%%%%%%%%%%%%%%%%%%%%

%%%%%%%%%%%%%%%%%%%%%%%%%%%%%%%%%%%%%%%%%%%%%%%%%%%%%%%%%% CLAIM 2 %%%%%%%%%%%%%%%%%%%%%%%%%%%%%%%%%%%%%%%%%%%%

%%%%%%%%%%%%%%%%%%%%%%%%%%%%%%%%%%%%%%%%%%%%%%%%%%%%%%%%%%%%%%%%%%%%%%%%%%%%%%%%%%%%%%%%%%%%%%%%%%%%%%%%%%%%%%%%%%%%%%%%%%%%%%%%%%%%%%%
\noindent To proceed forward we will need the following proposition.
\begin{proposition} \label{st} Let $\zeta$ and $M$ be as in the statement of Proposition \ref{c1}.  Assume that
{\it i) - iii)} of Proposition \ref{c1} hold for some $k\ge 1$ and that
$\psi$ satisfies property (\ref{10k}) with $M \ge 3(L+1)$. Let the
index $ j_1$ be such that $\psi_{j_1}=k$. Define $j_2 := j_1 + i_k
(\theta_{j_1} \psi)$, {\it i.e.} $j_2$ is the smallest index $i >
j_1$, such that $\psi_i \ge k$. Fix a discrete segment $I_{1}
\subset \NN \times \{j_1-1\}$, such that $|I_{1}| \geq L^k$ (we use
$|I|$ to denote the cardinality of a set $I$). Then the following
holds:
The set
\begin{equation*}
 I_2  := \{ v \in \NN\times \{ j_2-1\}:
\exists \mbox{ open permited path $\pi$ from $I_1$ to $v$} \} \qquad   \;  \; \notag
\end{equation*}
is a discrete segment
%\label{A333}
%\end{align}
and
\begin{align} \label{A34}
l ({I_2}) & \leq
l ({I_1}) + L^k + \sum_{r=1}^{k-1}(L^r +1) \Big \lfloor\frac{j_2 -j_1 -1}{M^r}\Big\rfloor,
% &\mbox{ if } k \ge 2,
\\
\label{A35}
r ({I_2}) & \geq
r({I_1}) + (j_2 - j_1-1) -  L^k - \sum_{r=1}^{k-1}(L^r +1) \Big \lfloor\frac{j_2 - j_1 -1}{M^r}\Big\rfloor.
% &\mbox{ if } k \ge 2.
\end{align}
In particular we have
\begin{equation} \label{A6}
|I_{2} | \geq
| I_{1} |
 + (j_2 - j_1-1) - 2L^k - 2\sum_{r=1}^{k-1}(L^r +1) \Big \lfloor\frac{j_2 - j_1 -1}{M^r}\Big\rfloor.
\end{equation}
\end{proposition}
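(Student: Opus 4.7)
\textbf{Proof plan for Proposition \ref{st}.} The plan is to decompose every open permitted path from $I_1$ up to row $j_2-1$ into a single-edge piece that crosses row $j_1$ and a long piece from row $j_1$ to row $j_2-1$. The first piece is rigidly constrained because $\psi_{j_1}=k$ forces the open vertices at row $j_1$ to sit exactly at columns divisible by $L^k$ (where $\zeta_{v_1}\ge k$); the long piece is then controlled by applying Proposition \ref{c1} at level $k$ to a translated problem.

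First I would handle the crossing of row $j_1$. Property (\ref{10k}) applied with $i=j_1-1,\,j=j_1$ gives $M^{\min\{\psi_{j_1-1},k\}}\le 1$, hence $\psi_{j_1-1}=0$, so no vertex at row $j_1-1$ is heavy. Since $|I_1|\ge L^k$, the extended interval $[l(I_1),r(I_1)+1]$ contains at least one multiple of $L^k$; let $x_*^L$ and $x_*^R$ be the smallest and largest such, so that $x_*^L\le l(I_1)+L^k-1$ and $x_*^R\ge r(I_1)-L^k+2$. For every valid $x_*$ (a multiple of $L^k$ in this interval) the vertex $(x_*,j_1)$ is open and heavy, and it is reached from $I_1$ by a single vertical or diagonal edge whose other endpoint, at row $j_1-1$, is not heavy and therefore introduces no conflict with the permitted condition.

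Next I would apply Proposition \ref{c1} to a translated problem. Fix a valid $x_*=qL^k$ and set $\zeta':=\theta_{x_*}\zeta=\theta_{qL^k}\zeta$, $\psi':=\theta_{j_1}\psi$. The map $(v_1,v_2)\mapsto(x_*+v_1,j_1+v_2)$ is a bijection between open (resp.\ heavy) vertices for $\omega_{\zeta',\psi'}$ with $v_1,v_2\ge 1$ and the corresponding vertices for $\omega_{\zeta,\psi}$ in the quadrant with column $>x_*$ and row $>j_1$, sending the shifted origin to $(x_*,j_1)$. Because Proposition \ref{st} is used inside the induction on $k$ that proves Proposition \ref{c1}, one may assume $\psi\in\Psi_M^k$, and then Proposition \ref{p1} applied with $\psi_{j_1}=k$ gives $\psi'\in\Psi_M^k$; by the very definition of $j_2$ one also has $i_k(\psi')=j_2-j_1$. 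Consequently Proposition \ref{c1} $i)$--$iii)$ applied to the pair $(\theta_{qL^k}\zeta,\psi')$ yields that $V(x_*):=V_{\langle\theta_{x_*}\zeta,\psi'\rangle}(j_2-j_1-1)$ is a discrete segment with $l(V(x_*))\le\max\{\Sigma,1\}$ and $r(V(x_*))\ge(j_2-j_1)-\max\{\Sigma,1\}$, where $\Sigma:=\sum_{r=1}^{k-1}(L^r+1)\lfloor(j_2-j_1-1)/M^r\rfloor$.

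Finally I would assemble the pieces. Every open permitted path from $I_1$ to row $j_2-1$ crosses row $j_1$ at exactly one valid $x_*$, and its restriction to rows $\ge j_1$ is itself open and permitted, hence $I_2=\bigcup_{x_*}(x_*+V(x_*))$. Using $x_*^L$ to bound $l(I_2)$ from above and $x_*^R$ to bound $r(I_2)$ from below, and absorbing the small constants into the $L^k$ term, immediately gives (\ref{A34}) and (\ref{A35}); inequality (\ref{A6}) follows by subtraction. That this union is actually a single discrete segment follows from a short computation using $M\ge 3(L+1)$ and $j_2-j_1\ge M^k$: one checks $\Sigma\le\tfrac{1}{2}(1-3^{1-k})(j_2-j_1)$, so each $V(x_*)$ has width at least $3(L+1)^k-1>L^k$, and consecutive translates (separated by exactly $L^k$) must overlap. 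The main obstacle is really the bookkeeping in the middle step: verifying that the translated configuration matches exactly the hypotheses of Proposition \ref{c1} via Proposition \ref{p1}, and that the permitted condition is preserved by concatenation because row $j_1-1$ contributes no heavy vertex.
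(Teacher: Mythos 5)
Your argument is correct and is essentially the paper's own proof: you decompose every open permitted path at the heavy row $j_1$, where the open vertices are exactly the multiples of $L^k$ reachable from $I_1$, apply Proposition \ref{p1} together with the inductive hypotheses \emph{i)--iii)} of Proposition \ref{c1} to the shifted pairs $(\theta_{qL^k}\zeta,\theta_{j_1}\psi)$, and glue the resulting segments, your estimate $\Sigma\le\tfrac12(1-3^{1-k})(j_2-j_1)$ playing exactly the role of Lemma \ref{buraco} in the paper's overlap inequality \eqref{overlap}, while the extreme crossing columns give \eqref{A34}--\eqref{A35}. The remaining differences are cosmetic: you admit the crossing column $r(I_1)+1$ and note $\psi_{j_1-1}=0$ explicitly to preserve the permitted condition, points the paper handles via the set $U_1$ and the heavy-junction remark accompanying \eqref{concat}.
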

\medskip

\noindent {\it Proof of Proposition \ref{st}}. Define
$$
U_{1}= \{u=(u_1, j_1):  (u_1, j_1-1)\in I_{1} \mbox{ and }  L^k \mid u_1\}.
$$
Since $|I_{1}| \geq L^k$  we have
\begin{equation}
|U_{1} | \geq
\begin{cases}
1, &\mbox{ if } L^k \leq |I_{1}| < 3L^k,
\\
\Big\lfloor \frac{|I_{1}| - 2L^{k}}{L^k} \Big\rfloor &\mbox{ if } |I_{1}| \geq 3L^k.
\end{cases}
\end{equation}
Enumerate the vertices of $U_{1}$ in increasing first coordinate order:
$$
U_{1}= \{u^1 = (u^1_1, j_1), \dots, u^{s_1} = (u^{s_1}_1, j_1) \}.
$$
Since $\zeta_{u^r_1}  \geq k$, for all $1 \leq r \leq s_1$, under assumption that $\psi_{j_1}=k$  we have:
$$
{u}^r = (u^r_1, j_1) \in {\mathcal{C}}_{\langle \zeta,\psi \rangle } (I_{1}) \cap (\ZZ\times \{j_1\}), \quad r = 1, \dots, s_1.
$$
The same argument as in the proof of Lemma \ref{cl33} applied to pair of sequences $\zeta$ and $\theta_{j_1} \psi$,  gives
\begin{equation}
{\mathcal{C}}_{\langle \theta_{u_1^1} \zeta, \, \theta_{j_1} \psi \rangle} \cap \mathbb{S}_{[0, \, j_2-j_1-1]}=
{\mathcal{C}}_{\langle \theta_{u_1^r}\zeta, \, \theta_{j_1} \psi \rangle}  \cap \mathbb{S}_{[0, \, j_2-j_1-1]}, \;
\mbox{ for }  2 \leq r \leq s_1,
\end{equation}
{\it i.e.} each ${\mathcal{C}}_{\langle \zeta,\psi \rangle } (u_1^r)  \cap \mathbb{S}_{[j_1, j_2-1]}$ is shift
of ${\mathcal{C}}_{\langle \zeta,\psi \rangle } (u_1^1) \cap \mathbb{S}_{[j_1, j_2-1]}$ by ${(r-1)L^k}$ to the right.

\medskip

\noindent By Proposition \ref{p1}, the sequence $\theta_{j_1}\psi$ satisfies the conditions of Proposition \ref{c1}
(recall that $j_1$ is such that $\psi_{j_1} = k$). Thus, applying {\it i) - iii)} subsequently to pairs of sequences
$\theta_{u_1^r}\zeta$ and $\theta_{j_1} \psi$, for $1 \leq r \leq s_1$, we have, thanks to validity of {\it i)}
for $k$, that the set
$$
V_{\langle \theta_{u_1^r}\zeta, \, \theta_{j_1} \psi \rangle} (i_k (\theta_{j_1} \psi)-1)
$$
is a discrete segment for all $1 \leq r \leq s_1$. Next we argue that
\begin{equation} \label{concat}
I_2 =
\bigcup_{r=1}^{s_1} [V_{\langle \theta_{u_1^r}\zeta, \, \theta_{j_1} \psi \rangle} (i_k (\theta_{j_1} \psi)-1) + u^r].
\end{equation}
Though the validity of the equality (\ref{concat}) is nearly obvious,  it is not immediate. One should take care of the following: if there are two permitted paths $\pi$ and  $\pi^\prime$, from $x$ to $y$ and  from $y$ to $z$, respectively,
their concatenation at the vertex $y$ does not necessarily result in a permitted path from $x$ to $z$. However  (recall Definitions \ref{percolacao} and \ref{permit}) when $y$ is a heavy vertex, as the case here for  $y=u^r$, one automatically gets a permitted path.

%However Definitions \ref{percolacao} and \ref{permit} imply that any permitted path $\pi^\prime$ which emanates from $u^r$  and ends up in the set %$V_{\langle \theta_{u_1^r}\zeta, \, \theta_{j_1} \psi \rangle} (i_k (\theta_{j_1} \psi)-1) + u^r$ always crosses all heavy vertices in the half-plane %$\{u: u_1 \geq u^r_1 + 1\} , \; 1 \le r \le s_1$.  On the other hand, any permitted path $\pi$ from $I_1$ to $u^r$ lies in the half-plane $\{u: u_1 %\leq u^r_1 \}$, and therefore concatenation of any two such paths at the vertex $u^r$ results in a permitted path from $I_1$ to $V_{\langle %\theta_{u_1^r}\zeta, \, \theta_{j_1} \psi \rangle} (i_k (\theta_{j_1} \psi)-1) + u^r$, which proves (\ref{concat}).

\medskip

\noindent Next we shall show that for all $2 \leq r \leq s_1 $
\begin{equation} \label{lr}
r\big(V_{\langle \theta_{u_1^{r-1}}\zeta, \, \theta_{j_1} \psi \rangle}  (i_k (\theta_{j_1} \psi)-1) + u^{r-1}\big)
\geq
l\big(V_{\langle \theta_{u_1^r}\zeta, \, \theta_{j_1} \psi \rangle} (i_k (\theta_{j_1} \psi)-1) + u^r \big)
\end{equation}
For this we need the following
\begin{lema}\label{buraco}Let $\tilde L$ and $M\geq 3\tilde L$ be positive integers.
Then for any $ k \in \NN$ and $a\geq M^k$, we have:
\begin{equation} \label{bu}
a\geq \tilde L^k + 2a \sum_{j=1}^{k-1} (\tilde L/M)^j.
\end{equation}
\end{lema}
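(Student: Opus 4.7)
The plan is a direct computation with the geometric series. Let $q := \tilde L/M$; the hypothesis $M \ge 3\tilde L$ gives $q \le 1/3$. Summing the finite geometric sum,
$$
2a \sum_{j=1}^{k-1} q^{j} \;=\; \frac{2a\,(q - q^{k})}{1-q},
$$
so after multiplying \eqref{bu} through by $1-q > 0$ the inequality to be proved becomes
$$
a\bigl(1 - 3q + 2q^{k}\bigr) \;\ge\; \tilde L^{k}\,(1-q).
$$

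Since $q \le 1/3$, the factor $1 - 3q$ is nonnegative, and I would simply discard it to obtain the stronger sufficient inequality $2a\, q^{k} \ge \tilde L^{k}(1-q)$. Substituting $q^{k} = \tilde L^{k}/M^{k}$, this reduces to $a \ge M^{k}(1-q)/2$, which is immediate from the hypothesis $a \ge M^{k}$ because $(1-q)/2 \le 1/2 \le 1$. The edge case $k=1$ is handled separately: by the stated convention $\sum_{r=1}^{0} = 0$, \eqref{bu} collapses to $a \ge \tilde L$, which follows at once from $a \ge M \ge 3\tilde L$.

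There is no real obstacle; the only point worth flagging is that the hypothesis $M \ge 3\tilde L$ enters exactly once, namely to guarantee $1 - 3q \ge 0$ so that the leading-order contribution $2q^{k}$ governs the bound. This is precisely why the numerical constant $3$ (rather than, say, $2$) appears in the spacing condition $M \ge 3(L+1)$ used throughout Proposition \ref{c1} and Theorem \ref{weight}.
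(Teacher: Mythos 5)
Your argument is correct and is essentially the paper's own: both proofs are a one-line geometric-series computation exploiting $\tilde L/M\le 1/3$ and $a\ge M^k$ (the paper bounds the sum directly by $a(1-3^{-k+1})$ and then uses $a\ge M^k\ge 3^{k}\tilde L^{k}$, while you keep $q=\tilde L/M$ exact and discard the nonnegative term $1-3q$; the difference is only bookkeeping). No issues.
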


\begin{proof}
A trivial computation shows that the r.h.s. of \eqref{bu} is bounded
from above by $\tilde L^k + a (1-3^{-k+1})$, and the inequality follows at
once.
\end{proof}
%We again use the induction in $k$. When $k=1$, the inequality (\ref{bu}) is satisfied
%by the choice of $n\geq M^k$. Assume (\ref{bu}) is valid for $k-1$.   Then

%\begin{align}
%L^k +2n\sum_{j=1}^{k-1}\big({L}/{M}\big)^j
%= \; & L\big(L^{k-1} +\sum_{j=1}^{k-1}2L^{j-1}\frac{n}{M^j}\big) \notag
%\\
%=\; & L\big(L^{k-1} + 2 \frac{n}{M} \sum_{j=1}^{k-2}2L^j\frac{1}{M^j} +
%\frac{2n}{M}\big) \notag
%\\
%\leq \;  & L\big({n}/{M} +
%{2n}/{M}\big) \notag
%\\
%\leq \; & n, \notag
%\end{align}
%where to obtain the third inequality we use the induction hypothesis
%with $n^\prime = {n}/{M}\geq M^{k-1}$.

\medskip
\noindent We return to the proof of (\ref{lr}). Applying {\it ii)} and {\it iii)} we have
\begin{align}
r\big(V_{\langle \theta_{u_1^{r-1}}\zeta, \, \theta_{j_1} \psi \rangle} (i_k (\theta_{j_1} \psi)-1) + & u^{r-1}\big) \notag
\\
\geq \;
 & j_2 - j_1 -1-\sum_{i=1}^{k-1}(L^i + 1) \Big \lfloor \frac{j_2 - j_1 - 1}{M^i} \Big \rfloor + u_1^{r-1} \notag
\\
\geq \; &   L^k + \sum_{i=1}^{k-1}(L^i + 1) \Big \lfloor \frac{j_2 - j_1 - 1}{M^i} \Big \rfloor + u_1^{r-1}   \notag
\\
\geq \; &   \sum_{i=1}^{k-1}(L^i + 1) \Big \lfloor \frac{j_2 - j_1 - 1}{M^i} \Big \rfloor + u_1^{r} \notag
\\
\geq \;  & l(V_{\langle \theta_{u_1^r}\zeta, \, \theta_{j_1} \psi \rangle} (i_k (\theta_{j_1} \psi)-1) + u^r), \label{overlap}
\end{align}
for all $2 \leq r \leq s_1$, and where the second inequality is
implied by (\ref{bu}) with $\tilde L=L+1$. Inequality
(\ref{overlap}) immediately implies that $I_2$ is the discrete
segment. Next we will show (\ref{A34}) and (\ref{A35}):

\begin{align}
l(I_2) = \; & l(V_{\langle \theta_{u_1^1}\zeta, \, \theta_{j_1} \psi \rangle} (i_k (\theta_{j_1} \psi)-1) + u^1 \notag
\\
\leq \; & l(V_{\langle \theta_{u_1^1}\zeta, \, \theta_{j_1} \psi \rangle} (i_k (\theta_{j_1} \psi)-1) + l(I_1) + L^k \notag
\\
\leq \; & l(I_1) + L^k +  \sum_{i=1}^{k-1}(L^i + 1) \Big \lfloor \frac{i_k (\theta_{j_1} \psi)-1}{M^i} \Big \rfloor \notag
\\
= \; & l(I_1) + L^k +  \sum_{i=1}^{k-1}(L^i + 1) \Big \lfloor \frac{j_2 - j_1 -1}{M^i} \Big \rfloor,  \notag
\end{align}
where in the third line inequality of the above display we used {\it ii)}. This proves (\ref{A34}). On the other hand
\begin{align}
r(I_2)=  \; & r\big(V_{\langle \theta_{u_1^{s_1}}\zeta, \, \theta_{j_1} \psi \rangle} (i_k (\theta_{j_1} \psi)-1) \big) + u^{s_1} \notag
\\
\geq \; & r\big(V_{\langle \theta_{u_1^{s_1}}\zeta, \, \theta_{j_1} \psi \rangle} (i_k (\theta_{j_1} \psi)-1)\big) +  r(I_1) -L^k \notag
\\
\geq \; & r(I_1)  - L^k + i_k (\theta_{j_1} \psi)-1 - \sum_{i=1}^{k-1}(L^i + 1) \Big \lfloor \frac{i_k (\theta_{j_1} \psi)-1}{M^i} \Big \rfloor \notag
\\
= \; & r(I_1) + (j_2 - j_1 -1) - L^k - \sum_{i=1}^{k-1}(L^i + 1) \Big \lfloor \frac{j_2 - j_1 -1}{M^i} \Big \rfloor,  \notag
\end{align}
where in the third line inequality of the above display we used {\it ii)}. This proves (\ref{A35}) and finishes the proof of the Proposition \ref{st}. $\Box$

\bigskip

\begin{remark} \label{r1} \rm{ Proposition \ref{st}
remains valid if we replace $\zeta$ by any shift $\theta_n \zeta$ of
it, in particular by $\theta_{mL^k} \zeta$.}
\end{remark}
\medskip

\noindent \noindent {\it Proof of Proposition \ref{c1}  (continuation)}

Now we return to the proof of {\it i) - iii)} for the case $k+1$, in order to complete the induction step. Recall
that we assume that $i_{k+1} (\psi) > i_{k} (\psi) $, which implies
that $\psi_{i_{k} (\psi)} = k$, and  we also assume that $\psi \in \Psi_M^{k+1}$.

\medskip

\noindent Define iteratively:
\begin{align}
%j_0 : = \; & 0, \notag
%\\
j_1 : =\; & i_k (\psi), \notag
\\
j_n : =\; &  j_{n-1} + i_k (\theta_{j_{n-1}}\psi), \quad \mbox{ for } \quad n=2, \dots, n^*, \notag
\end{align}
where $n^*=\min\{n\colon \psi_{j_n}\ge k+1\}$ that is, $j_{n^*}=i_{k+1}(\psi)$. Fix $m \geq 0$, and define
\begin{align}
I_1 : =\; & V_{\langle \theta_{mL^{k+1}}\zeta^{},\psi \rangle } (i_k (\psi)-1), \notag
\\
I_n : =\; &  \{ x \in {\mathcal{C}}_{\langle \theta_{mL^{k+1}} \zeta^{},\psi \rangle} (I_{n-1}) \cap (\ZZ\times \{ j_n-1\}):
\exists \mbox{ permited path from $I_{n-1}$ to $x$} \} \notag
\end{align}
for $2 \leq n \leq n^*$.

\bigskip
%%%%%%%%%%%%%%%%%%%%%%%%%%%%%%%%%%%%%%%%%%%%%%%%%%%%%%%%%%%%%%%%%%%%%%%%%%%%%%%%%%%%%%%%%%%%%%%%%%%%%%%%%%%%%%%%%%%%%%%%%%%%%%%%%%%%%%%%%%%%%%%%%55

\noindent First of all we notice that, as in the proof of Proposition \ref{st}:
$$
I_{n^*} = V_{\langle \theta_{mL^{k+1}}\zeta^{},\psi \rangle } (i_{k+1} (\psi)-1).
$$

\medskip

\noindent Observe now that the sequence $\theta_{j_{n-1}}\psi$, for any $1 \leq n \leq n^*$ satisfies the conditions of Proposition \ref{st}. We will
apply it to each $I_n$ iteratively, starting from $I_1$.
\medskip

\noindent {\it i) - case k+1.} Due to the first part of the Proposition \ref{st} we have that all $I_n$, $1 \leq n \leq n^*$ are discrete segments,
and therefore $V_{\langle \theta_{mL^{k+1}}\zeta^{},\psi \rangle } (i_{k+1} (\psi)-1)$ is a discrete segment.

\noindent {\it ii) - case k+1.}
\begin{align}
l (I_{n^*}) \equiv \; & l\big(V_{\langle \theta_{mL^{k+1}}\zeta^{},\psi \rangle } (i_{k+1} (\psi)-1) \big)  \notag
\\
\leq \; & l (I_{n^*-1}) + L^k +  \sum_{r=1}^{k-1}(L^r + 1) \Big \lfloor \frac{j_{n^*} - j_{n^*-1} -1}{M^r} \Big \rfloor  \notag
\\
\dots \; & \notag
\\
\leq \; & l (I_{1}) + (n^* -1 )L^k +  \sum_{s=2}^{n^*}\sum_{r=1}^{k-1}(L^r + 1) \Big \lfloor \frac{j_{s} - j_{s-1} -1}{M^r} \Big \rfloor  \notag
\\
\leq \; &  n^* L^k +  \sum_{s=1}^{n^*}\sum_{r=1}^{k-1}(L^r + 1) \Big \lfloor \frac{j_{s} - j_{s-1} -1}{M^r} \Big \rfloor  \notag
\\
\leq \; & \Big \lfloor\frac{i_{k+1}(\psi)-1}{M^k}\Big \rfloor L^k + \sum_{r=1}^{k-1} (L^r + 1) \sum_{s=1}^{n^*}
\Big \lfloor \frac{j_{s} - j_{s-1} -1}{M^r} \Big \rfloor \notag
\\
\leq \; & \Big \lfloor\frac{i_{k+1}(\psi)-1}{M^k}\Big \rfloor L^k + \sum_{r=1}^{k-1} (L^r + 1)
\Big \lfloor \frac{j_{n^*} -1}{M^r} \Big \rfloor \notag
\\
\leq \; &  \sum_{r=1}^{k} (L^r + 1)
\Big \lfloor \frac{i_{k+1}(\psi) -1}{M^r} \Big \rfloor, \notag
\end{align}
which proves validity of {\it ii) } for the case $k+1$.
\medskip

\noindent {\it iii) - case k+1.}
\begin{align}
r (I_{n^*}) \equiv \; & r\big(V_{\langle \theta_{mL^{k+1}}\zeta^{},\psi \rangle } (i_{k+1} (\psi)-1) \big)  \notag
\\
\geq \; &
r (I_{n^*-1}) + ({j_{n^*} - j_{n^*-1} -1}) -  L^k - \sum_{r=1}^{k-1}(L^r + 1) \Big \lfloor\frac{{j_{n^*} - j_{n^*-1} -1}}{M^r}\Big\rfloor \notag
\\
\dots \; & \notag
\\
\geq \; &  r (I_{1}) + \sum_{s=2}^{n^*}({j_{s} - j_{s-1} -1}) - n^* L^k - \sum_{s=2}^{n^*}\sum_{r=1}^{k-1}(L^r + 1) \Big \lfloor\frac{{j_{s} - j_{s-1} -1}}{M^r}\Big\rfloor \notag
\\
\geq \; &   \sum_{s=1}^{n^*}({j_{s} - j_{s-1} -1}) - n^* L^k - \sum_{s=1}^{n^*}\sum_{r=1}^{k-1}(L^r + 1) \Big \lfloor\frac{{j_{s} - j_{s-1} -1}}{M^r}\Big\rfloor \notag
\\
\geq \; & j_{n^*} - n^* -n^*L^k - \sum_{r=1}^{k-1}(L^r + 1) \Big \lfloor\frac{{j_{n^*} -1}}{M^r}\Big\rfloor \notag
\\
\geq \; & i_{k+1}(\psi) - \sum_{r=1}^{k}(L^r+1) \Big \lfloor\frac{{i_{k+1}(\psi) -1}}{M^r}\Big\rfloor, \notag
\end{align}
which proves validity of {\it iii) } for the case $k+1$, and finishes proof of the Proposition \ref{c1}. $\Box$

%Incluir figura%
%\begin{figure}[htbp]
%\begin{center}
%\includegraphics[width=5in]{algc.ps}
%\caption{The graph $G_z$. Here, we have $L=3$, the vertices of colors 1, 2
%and 3 are the white, gray and black circles, respectively; the vertices of
%color 0 are not indicated in the picture. The color word $z$ is in the
%vertical axis.}
%\end{center}
%\end{figure}

\bigskip

\noindent {\it Proof of Theorem \ref{weight}.}

If $\psi \in \Psi$ is such that $i_k(\psi)<\infty$ for all $k$, the
statement follows at once from Proposition \ref{c1}. If $i_{k(1)}(\psi)
<+\infty$ and  $i_{k(1)+1}(\psi) =+\infty$, one repeats the argument
of Proposition \ref{c1} to the sequence $\theta_{i_{k(1)}}(\psi)$ for
$j_1,j_2,...$ as in the proposition with $k=k(1)$; if this stops,
then one moves down to the next $k(2)<k(1)$, and so on.\qed

\medskip

\noindent The next result is a straightforward corollary of Theorem
\ref{weight} and Lemma \ref{compatibility-lemma}.

\begin{corolario}\label{pesos} If $\zeta{(L)}$ is given by \eqref{hierarquica} and $\psi \in \Psi_{M}$
with $M=3(L+1)$, then the pair $(\zeta{(L)},\psi)$ is compatible.
\end{corolario}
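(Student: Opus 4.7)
The statement is an immediate consequence of the two main tools developed earlier in the section, so the proof plan is essentially a verification that their hypotheses are met, followed by a direct concatenation.

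First I would check that the hypotheses of Theorem \ref{weight} are satisfied. The assumption $\psi \in \Psi_M$ with $M = 3(L+1)$ and $L \geq 2$ matches exactly the requirement $M \geq 3(L+1)$ and $L \geq 2$ in that theorem, and $\zeta(L)$ is defined via \eqref{hierarquica} which is the same sequence featuring in the theorem. Applying Theorem \ref{weight} therefore produces an infinite open permitted path $\pi$ starting from the origin in the percolation configuration $\omega_{\zeta(L),\psi}$ defined in \eqref{25aug5}.

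Next I would invoke Lemma \ref{compatibility-lemma}, whose hypothesis is precisely the existence of such an infinite open permitted path from the origin in $\omega_{\zeta(L),\psi}$. The conclusion of the lemma is that the pair $(\zeta(L), \psi)$ is compatible in the sense of Definition \ref{compat-weight}, which is exactly what is claimed.

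There is no real obstacle here, since all the hard work has already been done in Proposition \ref{c1} (used to prove Theorem \ref{weight}) and in the explicit deletion scheme constructed in the proof of Lemma \ref{compatibility-lemma}. The only thing to double-check is the edge case noted in Remark \ref{finito}: if $i_k(\psi) = +\infty$ for some $k$, then Theorem \ref{weight} still applies (as handled at the end of its proof, by iterating Proposition \ref{c1} on the initial finite segment and then proceeding downward through the finitely many remaining levels), and the argument in Lemma \ref{compatibility-lemma} also accommodates $\psi$ with only finitely many non-zero entries. Hence the corollary holds in full generality for $\psi \in \Psi_M$.
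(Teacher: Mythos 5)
Your proposal is correct and follows exactly the paper's route: the paper states the corollary as an immediate consequence of Theorem \ref{weight} combined with Lemma \ref{compatibility-lemma}, which is precisely the two-step verification you carry out. Your extra check of the edge cases (Remark \ref{finito} and finitely many non-zero entries of $\psi$) is consistent with how the paper handles them.
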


%In this framework we can re-state the Theorem \ref{principal}:
%\begin{teorema}\label{final}
%For the deterministic binary sequence $\eta{(L)}$ defined above and
%$p<\frac{1}{576L^2}$, we have that
%\begin{equation}
%\mathbb{P}_{p}\{\xi\in\Xi\colon (\eta{(L)},\xi)\mbox{ is
%compatible}\}>0.
%\end{equation}
%\end{teorema}

\section{Grouping \label{III}}

In this Section we show that if a binary sequence $\xi$ is sampled from ${\mathbb P}_p$ with low density $p$ of ones,
then with positive probability it can be viewed in a certain sense as an ``$M$-spaced sequence".
The precise statement is formulated in Corollary \ref{sec}. This is obtained via an algorithm that suitably groups the ones in $\xi$ into
{\it clusters} and attributes an adequate weight or {\it mass} to each cluster. The construction, with a hierarchical structure, was
developed in \cite{KSV} and is presented here for sake of completeness. This section is mainly devoted to the description of this grouping
procedure. The proofs of its convergence and further consequences needed here are taken from \cite {KSV} and included in the Appendix.

Let $\xi\in\Xi$ be distributed according to $\mathbb{P}_p$, where $p=\mathbb P_p(\xi_i=1)$ will be assumed small.  Let
$\Gamma\equiv\Gamma (\xi)=\{i \in \NN\colon \xi_i=1\}$. We shall decompose $\Gamma$ into sets $\mathcal{C}_{i}$, called {\it clusters},
to which an $\NN$-valued mass $m(\mathcal{C}_i)$ is attributed ($m(\mathcal{C}_i) \le |\mathcal{C}_i|$) in a way that
$d(\mathcal{C}_i,\mathcal{C}_j)\geq M^{\min\{m(\mathcal{C}_i),m(\mathcal{C}_j)\}}$, where $d(D_1,D_2)$ denotes the usual Euclidean distance
between two sets $D_1$ and $D_2$, and $|C|$ stands for the cardinality of the set $C$.

The clusters $\mathcal{C}_i:=\mathcal{C}_{\infty,i}$  will be obtained by a limiting recursive procedure. We will build
an infinite sequence $\{{\text{\bf C}}_k\}_{k\ge 0}$ of partitions of $\Gamma$. Each partition ${\text{\bf C}}_k$ is a
collection  ${\text{\bf C}}_k = \{ {{\mathcal{C}}}_{k,j} \}_{j\ge 1}$ of
subsets of $\Gamma$. The construction depends on the parameter $M$ ($M\ge 2$ a large integer to be fixed later according to the
conditions of Lemma \ref{lemma1.1} below). The clusters will be constructed as to have the properties
\begin{equation}
\label{2.z}
\text{each ${{\mathcal{C}}}_{k,j}$ is of the form $I\cap \Gamma$ for an
interval $I$},
\end{equation}
and
\begin{equation}
\label{2.one}
\text{span}({{\mathcal{C}}}_{k,j}) \cap \text{span}({{\mathcal{C}}}_{k,j'}) =
\emptyset \quad \text {if }\; j \neq j',
\end{equation}
where $span(C)$ is the smallest interval (in $\mathbb
Z_+$) that contains $C$. To each cluster ${\mathcal{C}}_{k,j}$  we will
attribute a {\it mass}, $m({\mathcal{C}}_{k,j})$, in such a way that
\begin{equation}
\label{2.two}
d({\mathcal{C}}_{k,j}, {\mathcal{C}}_{k,j'}) \ge M^r, \; \; \text{if} \; \min
\{m({\mathcal{C}}_{k,j}), m ({\mathcal{C}}_{k,j'})  \} \ge r, \; \; \text{for}
\; \; r = 1, \dots, k \text{ and } j \ne j'.
\end{equation}
To each cluster ${\mathcal{C}}_{k,j}$
we shall further associate a number $\ell({\mathcal{C}}_{k,j}) \in
\{0,1,\dots,k\}$ which will be called the {\it level of the
cluster}. This level will satisfy
\begin{equation}
\label{lev}
0 \le \ell ({\mathcal{C}}_{k,j}) < m({\mathcal{C}}_{k,j}).
\end{equation}

\noindent Finally we construct the (limiting) partition ${\text{\bf C}}_{\infty}= \{ {\mathcal{C}}_{\infty ,j} \}_{j\ge 1}$ of $\Gamma$:
\begin{equation}
\label{2.onea}
\text{span}({\mathcal{C}}_{\infty,j}) \cap \text{span}({\mathcal{C}}_{\infty,j'}) = \emptyset \quad \text {if} \; j \neq j'.
\end{equation}
To each cluster ${\mathcal{C}}_{\infty,j}$ of ${\text{\bf C}}_{\infty}$
we will attribute a mass $m ({\mathcal{C}}_{\infty,j})$, and a level
$\ell({\mathcal{C}}_{\infty,j})$, in such a way that
\begin{equation}
\label{2.a}
0 \le \ell({\mathcal{C}}_{\infty,j}) < m ({\mathcal{C}}_{\infty,j}),
\end{equation}
and the following property holds:
\begin{equation}
\label{2.four}
d({\mathcal{C}}_{\infty,j}, {\mathcal{C}}_{\infty,j'}) \ge M^r, \; \; \text{if}
\; \min \{m({\mathcal{C}}_{\infty,j}), m({\mathcal{C}}_{\infty,j'})  \} \ge r,
\; \; \text{for} \; \; r \ge 1 \text{ and } j \ne j',
\end{equation}
or equivalently,
\begin{equation}
\label{2.five}
d({\mathcal{C}}_{\infty,j}, {\mathcal{C}}_{\infty,j'}) \ge M^{\min \{m({\mathcal{C}}_{\infty,j}),
m({\mathcal{C}}_{\infty,j'})  \}}, \text {for} \; j \neq
j'.
\end{equation}

\noindent {$\mathbf{The \; construction.}$}
\medskip
Let the elements of  $\Gamma$ be labeled in increasing
order: $\Gamma = \{ x_j\}_{j\ge 1} $ with $x_1 < x_2 < \dots$.

\medskip
\noindent {\bf Level 0.} The clusters of level 0 are just the
subsets of $\Gamma$ of cardinality one. We take ${\mathcal{C}}_{0,j} =
\{x_j\}$ and attribute a unit mass to each such cluster. That is,
$m ({\mathcal{C}}_{0,j})=1$ and $\ell({\mathcal{C}}_{0,j})=0$. Set  ${\text{\bf
C}}_{0,0}={\text{\bf C}}_0 = \{ {\mathcal{C}}_{0,j} \}_{j\ge 1}$. Further
define $\alpha({\mathcal{C}}) = \omega({\mathcal{C}}) = x$  when ${\mathcal{C}} = \{x\}$ is a
cluster of level 0.

\medskip

\noindent {\bf  Level 1.} We say that $x_i, x_{ i+1}, \dots,
x_{i+n-1}$ form  a {\it maximal $1$-run of length $n\ge 2$} if
$$
x_{j+1}-x_j\, < \, M,  \; j=i, \dots ,i+n-2,
$$
and
$$
x_{j+1}-x_j\, \ge \, M  \; \;
\begin{cases} \text{for} \; \; j=i-1, j= i+n-1, \; &\text{if } \; i>1 \\
\text{for} \; \; j= i+n-1, \; &\text{if } \; i=1. \end{cases}
$$
The level 0 clusters $\{x_i\}, \{x_{i+1}\}, \dots, \{x_{i+n-1}\}$
will be called {\it constituents} of the run. Note that there are
no points in $\Gamma$ between two consecutive points of a maximal
1-run. Also note that if $x_{j+1}-x_j \ge M$ and $x_j-x_{j-1} \ge
M$, then $x_j$ does not appear in any maximal 1-run of length at
least 2.

For any pair of distinct maximal runs, $r'$ and $r''$ say, all
clusters in $r'$ lie to the left of all clusters in $r''$ or vice
versa. It therefore makes sense to
label the  consecutive maximal $1$-runs of length at least $2$ in
increasing order of appearance: $r^1_1, r^1_2, \dots $. It is
immediate that $\mathbb{P}_p$-a.s. all runs are finite, and that infinitely
many such runs exist. We write $r^1_i = r^1_i(x_{s_i}, x_{s_i +1},
\dots, x_{s_i+n_i-1}), \; i=1, \dots$, if the $i$-th run consists
of $x_{s_i}, x_{s_i +1}, \dots, x_{s_i+n_i-1}$. Note that $n_i\ge
2$ and $s_i+n_i \le  s_{i+1}$ for each $i$. The set
$$
  {\mathcal{C}}^1_{i} =\{x_{s_i},x_{s_i+1},\dots, x_{s_i+n_i-1}\}
$$
is called a {\it level 1-cluster}, i.e., $\ell( {\mathcal{C}}^1_i)=1$.
We attribute to ${\mathcal{C}}^1_i$ the mass given by
its cardinality:
$$
m ({\mathcal{C}}^1_i) = n_i.
$$
The points
$$
\alpha^1_i = x_{s_i}\quad \text{and}\quad \omega^1_i =
x_{s_i+n_i-1}
$$
are called, respectively, the {\it start-point} and {\it
end-point} of the run, as well as of the cluster ${\mathcal{C}}^1_i$.
To avoid confusion we sometimes  write more explicitly
$\alpha({\mathcal{C}}^1_i), \;\omega({\mathcal{C}}^1_i)$.

 By ${\text{\bf C}}_{1,1}$ we denote the set of clusters of level 1.
Let ${\text{\bf C}}'_{0,1}=\{\{x_i\}\colon   x_i \in \Gamma \setminus
{\mathcal{C}} \text{ for all } {\mathcal{C}} \in {\text{\bf C}}_{1,1}\}$ and
${\text{\bf C}}_1={\text{\bf C}}_{1,1}\cup{\text{\bf C}}'_{0,1}$.
Note that $\text{\bf C}_{1,1}$ and  $\text{\bf C}'_{0,1}$ consist
of level 1 and level 0 clusters, respectively, and that the union
of all points in these clusters is exactly $\Gamma$. We label the
elements of ${\text{\bf C}}_1$ in increasing order as ${\mathcal{C}}_{1,j}, j \ge 1$.
For later use we also define $\text{\bf
C}_{0,1} = {\text{\bf C}}_0$. \noindent {\bf Notation.} In our
notation ${\mathcal{C}} ^1_j$ denotes the $j^{th}$ level 1-cluster,
and ${\mathcal{C}}_{1,j}$ denotes the $j^{th}$ element in $\text{\bf
C}_1$ (always in increasing order).

%%%%%%%%%%%%%%%%%%%%%%%%%%%%%%%%%%%%%%%%%%%%%%%%%%%%%%%%%%%%%%%%%%%%%%%%%%%%%%%%%%%%%%%%%%%%%%%%%%%%%%%%%%%%%%%%%%%%%%%%%%%%%%
\medskip
\noindent {\bf Level k+1.} Let  $k \ge 1$ and assume that the
partitions ${\text{\bf C}}_{k'} = \{{\mathcal{C}}_{k',j}\colon \; j \ge 1
\}$, and the masses of the ${\mathcal{C}}_{k',j}$ have already been defined
for $k'\le k$, and satisfy the properties (\ref{2.z})-(\ref{2.two}) and that
${\text{\bf C}}_{k}$ consists of clusters ${\mathcal{C}}$ of
levels $\ell \in \{0,1,\dots,k\}$. I.e.,
\begin{equation}
\label{2.zz}
\text{\bf C}_{k}\subset \cup_{\l=0}^k  \text{\bf C}_{\l,\l},
\end{equation}
where for  $\l \ge 0$, ${\text{\bf C}}_{\l,\l}$ is the set of
level $\ell$ clusters. We assume, as before, that the labeling goes
in increasing order of appearance. Define
\begin{equation}
\label{2.4e}
{\text{\bf C}}_{k,k+1} = \{ {\mathcal{C}} \in {\text{\bf C}}_k \colon  m
({\mathcal{C}})\ge k+1 \}.
\end{equation}
Notice that ${\text{\bf C}}_{1,2}={\text{\bf C}}_{1,1}$ and
${\text{\bf C}}_{k,k+1}\subseteq\cup_{\ell=1}^k{\text{\bf
C}}_{\ell,\ell}$, if $k \ge 1$.

In the previous enumeration of ${\text{\bf C}}_{k}$, let $j_1 <
j_2 < \dots$ be the labels of the clusters in ${\text{\bf
C}}_{k,k+1}$, so that $\text{\bf C}_{k,k+1} = \{{\mathcal{C}}_{k,j_1},
{\mathcal{C}}_{k,j_2},\dots\}$. In ${\text{\bf C}}_{k, k+1}$ we consider
consecutive maximal $(k+1)$-runs, where we say that the clusters
${\mathcal{C}}_{k,j_s}, {\mathcal{C}}_{k,j_{s+1}}, \dots {\mathcal{C}}_{k,j_{s+n-1}}
\in {\text{\bf C}}_{k,k+1}$ form a {\it maximal $(k+1)$-run} of
length $n\ge 2$ if:
$$
d({\mathcal{C}}_{k,j_i}, \,  {\mathcal{C}}_{k,j_{i+1}})\, < \, M^{k+1}, \; i=s,
\dots ,s+ n-2,
$$
and in addition
$$
d({\mathcal{C}}_{k,j_i}, \,  {\mathcal{C}}_{k,j_{i+1}})\, \ge \, M^{k+1}  \; \;
\begin{cases} \text{for} \; \; i=s-1, i= s+n-1, \; &\text{if} \; j_s>1 \\
\text{for} \; \; i= s+n-1, \; &\text{if} \; j_s=1. \end{cases}
$$
Again it is immediate that $\mathbb{P}_p$-a.s. all $(k+1)$-runs are finite
and that infinitely many such runs exist. Again we can label them in
increasing order and write  $r^{k+1}_i = r^{k+1}_i ({\mathcal{C}}_{k,j_{s_i}},
{\mathcal{C}}_{k,j_{s_i+1}},$ $ \dots, {\mathcal{C}}_{k,j_{s_i+n_i-1}})$ for the $i$-th $(k+1)$-run, for suitable
$s_i,n_i$ such that $n_i\ge 2$ and $s_i+n_i \le  s_{i+1}$ for all
$i$. ($s_i,n_i$ have nothing to do with those in the previous
steps of the construction.) We set
$$
\alpha^{k+1}_i = \alpha({\mathcal{C}}_{k,j_{s_i}})\quad \text{and}\quad
\omega^{k+1}_i =\omega ({\mathcal{C}}_{k,j_{s_i+n_i-1}}),
$$
and call these the start-point and end-point of the run,
respectively. We define the span of the run
$$
\text{span}(r^{k+1}_i)=[\alpha^{k+1}_i, \, \omega^{k+1}_i],
$$
and  associate to it a cluster ${\mathcal{C}}^{k+1}_i$ of level
$k+1$, defined as
$$
  {\mathcal{C}}^{k+1}_i = \text{span}(r^{k+1}_i)\cap \Gamma.
$$

%%%%%%%%%%%%%%%%%%%%%%%%%%%%%%%%%%%%%%%%%%%%%%%%%%%%%%%%%%%%%%%%%%%%%%%%%%%%%%%%%%4\end{document}

It is made up from the clusters ${\mathcal{C}}_{k,j_{s_i}},
{\mathcal{C}}_{k,j_{s_i+1}}, \dots, {\mathcal{C}}_{k,j_{s_i + n_i-1}}$. In this case,
the clusters ${\mathcal{C}}_{k,j_{s_i}}, {\mathcal{C}}_{k,j_{s_i+1}},
 \dots {\mathcal{C}}_{k,j_{s_{i}+n_i-1}}$ are called {\it constituents}
of ${\mathcal{C}}^{k+1}_i$. To the cluster ${\mathcal{C}}^{k+1}_i$ we
attribute the mass $m({\mathcal{C}}^{k+1}_i)$  by the following
rule:
\begin{equation}
\label{2.foura}
m ({\mathcal{C}}^{k+1}_i) = m ({\mathcal{C}}_{k,j_{s_i}})+
\sum_{s=s_i+1}^{s_i+n_i-1} (m ({\mathcal{C}}_{k,j_s}) - k) =
\sum_{s=s_i}^{s_i+n_i-1} m ({\mathcal{C}}_{k,j_s}) - k(n_i-1).
\end{equation}

The points $\alpha^{k+1}_i$ and $\omega^{k+1}_i $ will also be
called, respectively,
{\it start}- and {\it end-point} of the cluster ${\mathcal{C}}^{k+1}_i$, and
are also written as $\alpha({\mathcal{C}}^{k+1}_i)$ and
$\omega({\mathcal{C}}^{k+1}_i)$.

By ${\text{\bf C}}_{k+1,k+1}$ we denote the set of all level
$({k+1})$ clusters. Take $\text{\bf C}'_{k,k+1} = \{ {\mathcal{C}} \in
{\text{\bf C}}_{k}\colon {\mathcal{C}} \cap \text{span}(r^{k+1}_i) =
\emptyset, \; i=1,2, \dots \}$. Finally we define ${\text{\bf
C}}_{k+1} := {\text{\bf C}}_{k+1,k+1} \cup \text{\bf C}'_{k,k+1}$.
We label the elements of ${\text{\bf C}}_{k+1}$ as
${\mathcal{C}}_{k+1,j}, j \ge 1$, in increasing order. Note that a cluster in
${\text{\bf C}}_k$ is also a cluster in ${\text{\bf C}}_{k+1}$ if
and only if it is disjoint from the span of each maximal
$(k+1)$-run of length at least 2. Thus ${\text{\bf C}}_{k+1}$ may
contain some clusters of level no more than $k$, but
 some clusters (of level $\le k$) in ${\text{\bf C}}_{k}$
no longer appear in ${\text{\bf C}}_{k+1}$ (or any ${\text{\bf
C}}_{k+j}$ with $j \ge 1$).

Note also that in the formation of a cluster of level $(k+1)$,
clusters of mass at most $k$ might be incorporated while taking
the span of a $(k+1)$-run; they form what we call {\it dust}
(of level at most $k-1$) in between the
constituents, which have mass at least $k+1$.

This describes the construction of the $\bold C_k$. We next show by
induction that
\begin{equation}
\label{nested}
\bold C_k \text{ is a partition of $\Gamma$ and $\bold
C_k$ is a refinement of $\bold C_{k+1}$}
\end{equation}
for $k \ge 0$. This is clear
for $k=0$, since $\bold C_0$ is the partition of $\Gamma$ into
singletons. If we already know (\ref{nested}) for $0 \le k \le K$, then
it follows also for $k = K+1$ from the fact that clusters in $\bold
C_{k+1}$ are formed from the clusters in $\bold C_k$ by combining the
consecutive clusters between the start- and end-point of
a maximal $(k+1)$-run into one cluster. Thus it takes a
number of successive clusters in $\bold C_k$ and combines them into one
cluster. This establishes (\ref{nested}) for all $k$.

The definition of $\bold C_k$ shows that
$$
\bold C_{k,k} \subset \bold C_k \subset \bold C_{k,k} \cup \bold C'_{k-1,k}
\subset \bold C_{k,k} \cup \bold C_{k-1},
$$
from which we obtain by
induction that (\ref{2.zz}) holds, as well as
$$
\ell({\mathcal{C}}) = \min\{k\colon{\mathcal{C}} \in \text{\bf C}_k\}
$$
for any ${\mathcal{C}} \in \cup_{k\ge 1} \text{\bf C}_k$.

%%%%%%%%%%%%%%%%%%%%%%%%%%%%%%%%%%%%%%%%%%%%%%%%%%%%%%%%%%%%%%%%%%%%%%%%%%%%%%%%%%%%%%%%%%%%%%%%%%%%%%%

%%%%%%%%%%%%%%%%%%%%%%%%%%%%%%%%%%%%%%%%%%%%%%%%%%%%%%%%%%%%%%%%%%%%%%%%%%%%%%%%%%%%%%%%%%%%%%%%%%%%%%%%

%%%%%%%%%%%%%%%%%%%%%%%%%%%%%%%%%%%%%%%%%%%%%%%%%%%%%%%%%%%%%%%%%%%%%%%%%%%%%%%%%%%%%%%%%%%%%%%%%%%%%%%%

We use induction once more to show that for any $k \ge 0$
\begin{equation}
m({\mathcal{C}}) \ge \ell({\mathcal{C}}) +1  \text{ for any }
{\mathcal{C}} \in \bigcup_{0 \le\ell \le k} \mathbf{C}_\ell,
\label{2.fourc}
\end{equation}
and if ${\mathcal{C}}^{k+1}_i$ is formed from the constituents
${\mathcal{C}}_{k,j_{s_i}},\dots,{\mathcal{C}}_{k,j_{s_i+n_i-1}}$
with $n_i \ge 2$, then
\begin{equation}
m ({\mathcal{C}}^{k+1}_i) \ge \max_{s_i \le s \le s_i+n_i-1}
m({\mathcal{C}}_{k,j_s})+n_i-1 >  \max_{s_i \le s \le s_i+n_i-1}
m({\mathcal{C}}_{k,j_s}).
\label{2.fourb}
\end{equation}
Indeed, (\ref{2.fourc}) trivially holds for $k = 0$. Moreover, if
(\ref{2.fourc}) holds for $k \le K$, then (\ref{2.fourb}) for $k = K$
follows from the rule (\ref{2.foura}) (and $n_i \ge 2$). In turn,
(\ref{2.fourb}) and (\ref{2.fourc}) for $k \le K$ imply
\begin{equation}
m({\mathcal{C}}_i^{k+1}) \ge \max_{s_i \le s \le s_i+n_i-1}m({\mathcal{C}}_{k,j_s})+1,
\label{2.zy}
\end{equation}
and hence also (\ref{2.fourc}) for $k = K+1$.

So far we have shown that ${\text{\bf C}}_{k+1}$ is a
partition of $\Gamma$ which satisfies (\ref{2.z}) and (\ref{2.one}) with
$k$ replaced  by  $k+1$ (by the definition of ${\text{\bf
C}}'_{k,k+1}$ and induction on $k$).
We next show by an indirect proof that this is also true for
(\ref{2.two}). It is convenient to first prove the following claim:
\newline
 {\bf Claim.} If $t \ge 1$,
 ${\mathcal{C}} \in  \cup_{j \ge 0}\text{{\bf C}}_{t+j}$ and
$\ell({\mathcal{C}}) \le t$, then we have
\begin{equation}
{\mathcal{C}} \in \text{{\bf C}}_{s,s+1} \text{ for } \l({\mathcal{C}}) \le s \le
(m({\mathcal{C}})-1)\land t. \label{2.fourff}
\end{equation}
(see definition (\ref{2.4e})). To see this, define $\wh s$ as the
smallest $s \ge \l({\mathcal{C}})$ for which ${\mathcal{C}} \notin \text{{\bf
C}}_{s,s+1}$, and assume that $\wh s \le (m({\mathcal{C}})-1)\land t$. Then
$m({\mathcal{C}}) \ge \wh s+1$, so that we must have ${\mathcal{C}} \notin \text{{\bf C}}
_{\wh s}$. But also ${\mathcal{C}} \in\text{{\bf C}}_{\wh s - 1,\wh s}
\subseteq \text{{\bf C}}_{\wh s-1}$. (Note that $\wh s = \l({\mathcal{C}})$
cannot occur, because one always has ${\mathcal{C}} \in \mathbf{C}_{\l,\l+1}$ for
$\l = \l({\mathcal{C}})$, by
virtue of (\ref{2.fourc}).) But then it must
be the case that ${\mathcal{C}}$ intersects span $(r_i^{\wh s})$ for some
$i$. In fact, by our construction, ${\mathcal{C}}$ must then be a
constituent of some cluster in  $\text{{\bf C}}_{\wh s}$
corresponding to a maximal $\wh s$-run of length at least 2. But
then ${\mathcal{C}}$ does not appear in $\text{{\bf C}}_{\wh s +j}$ for any $j
\ge 0$, and in particular ${\mathcal{C}} \notin \cup_{j \ge 0} \text{{\bf C}}_{t+j}$,
contrary to our assumption. Thus, $\wh s \le (m({\mathcal{C}})-1) \land
t$ is impossible and our claim must hold.

We now turn to the proof of (\ref{2.two}). This is obvious for $k =
0$ or $k=1$. Assume then that (\ref{2.two}) has been proven for some
$k \ge 1$. Assume further, to derive a contradiction, that ${\mathcal{C}}'$
and ${\mathcal{C}}''$ are two distinct clusters in $\text{{\bf C}}_{k+1}$
such that $\min\{m({\mathcal{C}}'), m({\mathcal{C}}'')\} \ge  r$ but $d({\mathcal{C}}',{\mathcal{C}}'') <
M^r$  for some $r \le k+1$. Without loss of generality we take $r
= m({\mathcal{C}}') \land m({\mathcal{C}}'') \land (k+1)$. Let ${\mathcal{C}}'$ and ${\mathcal{C}}''$ have
level $\l'$ and $\l''$, respectively. Since these clusters belong
to ${\text{\bf C}}_{k+1}$ we must have $\max(\l',\l'') \le k+1$.
For the sake of argument, let $\l' \le \l''$. If $\l'=\l''=k+1$,
then $d({\mathcal{C}}', {\mathcal{C}}'') \ge M^{k+1}$, because, by construction, two
distinct clusters of level $k+1$ have distance at least $M^{k+1}$.
In this case we don't have $d({\mathcal{C}}',{\mathcal{C}}'') < M^r$, so that we may
assume $\l' < k+1$.

Now first assume that $r-1 \ge \max(\l',\l'')= \l''$. Since $r-1
\le k$ we then have by (\ref{2.fourff}) (with $t = k$) that ${\mathcal{C}}'$
and ${\mathcal{C}}''$ both belong to $\text{{\bf C}}_{r-1,r}$. If the
distance from ${\mathcal{C}}'$ to the nearest cluster in $\text{{\bf
C}}_{r-1,r}$ is less than $L^r$, then ${\mathcal{C}}'$ will be a constituent
of a cluster of level $r$ and ${\mathcal{C}}'$ will not be an element of
${\text{\bf C}}_{k+1}$. Thus it must be the case that the distance
from ${\mathcal{C}}'$ to the nearest cluster in $\text{{\bf C}}_{r-1,r}$ is
at least $M^{r}$. A fortiori, $d({\mathcal{C}}', {\mathcal{C}}'') \ge M^r$. This
contradicts our choice of ${\mathcal{C}}', {\mathcal{C}}''$.

The only case left to consider is when $r-1 < \max(\ell',\ell'') =
\ell''$. Since $r-1 = ( m({\mathcal{C}}') -1) \land (m({\mathcal{C}}'')-1) \land k \ge
\ell' \land \ell'' \land k = \ell'$ (by (\ref{2.fourc}); recall that $\ell' <
k+1$ now) this means $\l'
\le r-1 < \ell''$. We still have as in the last paragraph that ${\mathcal{C}}'
\in \text{{\bf C}}_{r-1,r}$, and that the distance between ${\mathcal{C}}'$
and the nearest cluster in $\text{{\bf C}}_{r-1,r}$ is at least
$M^r$.  By (\ref{2.one}) span $({\mathcal{C}}')$ and span $({\mathcal{C}}'')$ have
to be disjoint. For the sake of argument let us further assume
that ${\mathcal{C}}'$ lies to the left of ${\mathcal{C}}''$, that is, $\om({\mathcal{C}}') <
\alpha({\mathcal{C}}'')$. We claim that $\al({\mathcal{C}}'')= \al({\mathcal{C}})$ for some cluster
${\mathcal{C}} \in \text{{\bf C}}_{r-1,r}$. Indeed, the start-point of a
cluster of level $\wt \ell\ge 2$ equals the start-point of one of
its constituents, which belongs to $\text{{\bf C}}_{\wt \ell -1,\wt
\ell} \subseteq \text{{\bf C}}_{\wt \ell-1}$. Repetition of this
argument shows that $\al({\mathcal{C}}'')$ is also the start-point of a
cluster ${\mathcal{C}}$ which is a constituent of some cluster $\wh {\mathcal{C}}$ such that $s :=\ell({\mathcal{C}}) \le r-1$ but $t+1 := \ell(\wh {\mathcal{C}}) \ge r$. In particular, ${\mathcal{C}} \in \text{{\bf C}}_{t,t+1}$, so
that ${\mathcal{C}} \in \mathbf{C}_t$ and $m({\mathcal{C}}) \ge t+1 \ge r$.
Thus $\ell({\mathcal{C}}) \le r-1 \le (m({\mathcal{C}}) -1) \land t$.
It then follows from (\ref{2.fourff}) that ${\mathcal{C}} \in \text{{\bf
C}}_{r-1,r}$. As in the preceding case we then have

\begin{align}
d({\mathcal{C}}',{\mathcal{C}}'') & \ge \al({\mathcal{C}}'') - \om({\mathcal{C}}')= \al({\mathcal{C}}) - \om({\mathcal{C}}')\\
&\ge \text{ the distance from ${\mathcal{C}}'$ to the nearest cluster
in \bf C}_{r-1,r}\\
&\ge M^r.
\end{align}

Of course the inequality $d({\mathcal{C}}',{\mathcal{C}}'')$ remains valid if ${\mathcal{C}}'$
lies to the right of ${\mathcal{C}}''$, so that we have arrived at a
contradiction in all cases, and (\ref{2.two}) with $k$ replaced by
$k+1$ must hold. This completes the proof of (\ref{2.two}).

\medskip
\noindent {$\bf Construction  \; of$ ${\text{\bf C}}_{\infty}$.} Observe
that each $x\in \Gamma$ may belong to clusters of several levels, but
not to different clusters of the same level (see (\ref{2.one})). If
${\mathcal{C}}'$ and ${\mathcal{C}}''$ are two clusters of levels $\ell'$ and
$\ell''$, respectively, with $\ell' < \ell''$, then

\begin{equation}
\text{span } ({\mathcal{C}}') \cap \text{ span }({\mathcal{C}}'') \ne \emptyset \text{
implies
 span $({\mathcal{C}}') \subseteq$ span $({\mathcal{C}}'')$}.
\label{2.y}
\end{equation}
There will even have be a sequence ${\mathcal{C}}_0= {\mathcal{C}}', {\mathcal{C}}_1, \dots, {\mathcal{C}}_s,
{\mathcal{C}}_{s+1} = {\mathcal{C}}''$
such that ${\mathcal{C}}_i$ is a constituent of ${\mathcal{C}}_{i+1}, 0 \le i\le s$.
This follows from the fact that each $\mathbf{C}_k$
is a partition of $\Gamma$ and that $\mathbf{C}_k$ is a refinement of $\bold
C_{k+1}$. In fact, each element of $\mathbf{C}_{k+1}$ is obtained by
combining several constituents which are consecutive elements of
$\mathbf{C}_k$. (We allow here that an element of $\mathbf{C}_k$ is
already an element of $\mathbf{C}_{k+1}$ by itself.)
In turn, we see then from (\ref{2.zy}) that
$m({\mathcal{C}}'') > m({\mathcal{C}}')$. In particular, no point belongs to two different
clusters with the same mass. We shall use this fact in the proof of
the next lemma.

We define the random index
\begin{equation}
\kappa (x) = \sup\{\ell\colon x\in {\mathcal{C}} \;{\text{ for some}}\;
{\mathcal{C}} \in {\text{\bf C}}_{\ell,\ell}  \}.
\end{equation}
If we allow the value $\infty$ for $\kappa(x)$, then this index is
always well defined, since each $x \in \Gamma$ belongs at least to
the cluster $\{x\}$ of level 0.

\begin{lema} {\label{lemma1.1}} Assume that the sequence $\xi$ is
distributed according to $\mathbb{P}_p$. If
$p>0$ and $3 \le M < (64 p)^{-1/2}$ we have a.s. $\kappa(x) <
\infty$, for all $x \in \Gamma$.
\end{lema}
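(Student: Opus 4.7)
The plan is to show that
$$
q_k := \mathbb{P}_p\bigl(y \in \Gamma,\ \kappa(y) \ge k\bigr) \longrightarrow 0 \quad \text{as } k\to\infty,
$$
for a single fixed site $y$. Since $\mathbb{P}_p$ is translation invariant, $q_k$ does not depend on $y$, so countable subadditivity gives
$$
\mathbb{P}_p\bigl(\exists\,n \in \Gamma\colon \kappa(n) = \infty\bigr) \le \sum_{n\ge 1} \mathbb{P}_p\bigl(n \in \Gamma,\ \kappa(n) = \infty\bigr) \le \sum_{n\ge 1}\inf_{k}q_k = 0,
$$
which is the conclusion of the lemma.

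To bound $q_k$ I would set up a recursive inequality across the hierarchical scales. Let $\rho_k$ denote the density (probability per site) of start-points of clusters in $\mathbf{C}_{k-1,k}$, i.e., of clusters of mass $\ge k$ inside the partition $\mathbf{C}_{k-1}$. A direct count of maximal $1$-runs of length $\ge 2$ gives the base case $\rho_1 \le p\bigl(1-(1-p)^{M-1}\bigr) \le M p^2$. For the inductive step, note that a cluster in $\mathbf{C}_{k,k+1}$ is either a level-$(k+1)$ cluster — formed by combining at least two elements of $\mathbf{C}_{k-1,k}$ with successive distances less than $M^{k+1}$ — or a cluster of level $\le k-1$ with mass $\ge k+1$ that survived earlier steps (the latter contributes only lower-order terms, by \eqref{2.zy}, because its mass had to be already large at a previous level). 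A union bound over the position of the second qualifying constituent inside a window of length $M^{k+1}$, using that the two ``halves'' depend on essentially disjoint blocks of the i.i.d.\ Bernoulli variables $\xi$, yields
$$
\rho_{k+1} \le C\, M^{k+1}\, \rho_k^{\,2}.
$$

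Iterating this recursion is routine. Setting $\sigma_k := M^{k+2}\rho_k$, the recursion becomes $\sigma_{k+1}\le C\sigma_k^2$, so $\sigma_k \le C^{-1}(C\sigma_1)^{2^{k-1}}$. Since $\sigma_1 \le M^3\cdot Mp^2 = M^4 p^2$, the hypothesis $64 M^2 p <1$ forces $C\sigma_1<1$ once one verifies that the numerical constant $C$ is of the correct order (at most $4096$), yielding doubly exponential decay of $\rho_k$. To convert start-point density into span coverage, I would use the first moment bound
$$
\mathbb{P}_p\bigl(y\in \mathrm{span}\,\mathcal{C}\ \text{for some }\mathcal{C}\in\mathbf{C}_{k,k}\bigr) \le \rho_k\cdot \mathbb{E}\bigl[\mathrm{span}\,\mathcal{C}\bigm|\mathcal{C}\in\mathbf{C}_{k,k}\bigr];
$$
the expected span grows at most geometrically in $M^k$ (each of finitely many constituents contributes at most the previous-scale span plus a gap $<M^k$), and is easily dominated by the doubly exponential decay of $\rho_k$, so $q_k\to 0$.

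The main technical obstacle is justifying the recursion $\rho_{k+1}\le CM^{k+1}\rho_k^2$ rigorously. The clusters in $\mathbf{C}_k$ are not independent across disjoint intervals: the formation rule at level $k+1$ requires the absence of further qualifying clusters in an $M^{k+1}$-neighborhood, and the ``dust'' absorbed into spans can shift the start-points of already-formed clusters. I would handle this either by exposing the grouping procedure sequentially from left to right (so each candidate constituent is checked against a bounded look-ahead region whose configuration is still independent of the past), or by working directly with the underlying $\xi$ and bounding the relevant events by explicit patterns of ones. Pinning down the numerical constant $C$ so as to match the specific threshold $M<(64p)^{-1/2}$ appearing in the hypothesis is the final delicate step.
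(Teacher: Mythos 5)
Your proposal replaces the paper's argument by a multi-scale renormalization recursion, which is a genuinely different route. The paper instead fixes a site $z$, encodes each cluster by a \emph{genealogical weighted tree} $(\Upsilon,\bar\ell,\bar m)$, and sums the probability $\mathbb{P}_p(\alpha(\mathcal C)=z,\ m(\mathcal C)=k)$ over all admissible trees, weights and levels; exponential decay in $k$ comes out of the constraints \eqref{2.12aa}--\eqref{2.14aa} together with the bound $4^u$ on the number of plane trees, after which a deterministic diameter bound $\operatorname{diam}(\mathcal C)<3L^{k-1}$ and Borel--Cantelli finish the proof. Your scheme, if it could be carried out, would have the advantage of producing doubly exponential rather than merely exponential decay and would hide the combinatorics; the paper's scheme has the advantage of being self-contained and of localizing all correlations to a single "does $z$ start a mass-$k$ cluster" question, so no conditional-independence claims are needed at all.

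However there is a genuine gap, and it is precisely at the step you yourself flag as "the main technical obstacle." First, the recursion $\rho_{k+1}\le C M^{k+1}\rho_k^2$ is set up with a level-index error: a level-$(k+1)$ cluster is formed by merging constituents drawn from $\mathbf C_{k,k+1}$ (elements of $\mathbf C_k$ of mass $\ge k+1$), not from $\mathbf C_{k-1,k}$, so the natural bound reads $\tau_{k+1}\lesssim M^{k+1}\rho_{k+1}^2$ for the density $\tau_{k+1}$ of newly formed level-$(k+1)$ clusters, which does not directly close a recursion for $\rho_{k+1}$ in terms of $\rho_k$. Second, the contribution of "survivors" -- clusters already present at level $k-1$ with mass $\ge k+1$ -- is dismissed as lower order by appeal to \eqref{2.zy}, but \eqref{2.zy} only controls how the mass of a cluster \emph{grows} under merging; it gives no estimate on how many low-level clusters happen to have unusually high mass, which is exactly what that term is. A two-parameter recursion in (level, mass) or a separate tail bound on mass at fixed level would be needed, and that is not a routine fix. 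Third, the independence used in the union bound over the position of the second constituent is not available: both constituents are themselves determined by the entire construction to their left and right, and the proposed repairs (sequential left-to-right exposure, direct pattern bounds) are exactly where the real work would have to happen. Finally, the conversion $\mathbb{P}_p(y\in\operatorname{span}\mathcal C)\le\rho_k\,\mathbb E[\operatorname{span}\mathcal C\mid\mathcal C\in\mathbf C_{k,k}]$ is not a valid first-moment bound as written (the conditioning on cluster type and the choice of reference point do not combine this way), and the paper sidesteps it entirely with the deterministic estimate $\operatorname{diam}(\mathcal C)<3L^{k-1}$ proved in \eqref{2.25}. Until the recursion is actually established with explicit constants, the matching of $C\le 4096$ to the threshold $M<(64p)^{-1/2}$ is aspirational rather than demonstrated.
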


Lemma \ref{lemma1.1} can be used for the construction of ${\text{\bf
C}}_{\infty}$. It tells that with $\mathbb P_p$--probability one,
for each $x \in \Gamma$, there exists a cluster of
level $\kappa (x)\in \Bbb Z_+$ which contains $x$. This cluster is
unique, since the elements of ${\text{\bf C}}_{k,k}$ are pairwise
disjoint. We call it
 the {\it maximal cluster of} $x$ and denote it by ${\cal D}_x$.
Moreover, for $x,x' \in \Gamma$, if $x' \in {\cal D}_x$, then $\kappa
(x)= \kappa (x')$ and ${\cal D}_x = {\cal D}_{x'}$. Indeed, $\kappa(x)
\ne \kappa(x')$ would contradict (\ref{2.y}) and the definition of
$\kappa$, while $\kappa (x)= \kappa (x')$ but ${\cal D}_x \ne {\cal D}_{x'}$ is impossible by (\ref{2.one}).

Take $\hat x_1 = x_1 \in \Gamma = \{ x_j\}_{j\ge 1} $ and define
${\mathcal{C}}_{\infty ,1} = {\cal D}_{\hat x_1}$. Having defined ${\mathcal{C}}_{\infty ,j} = {\cal D}_{\hat x_j}$ for $j=1,\dots,k$, we set $\hat
x_{k+1} = \min \{x_j \in \Gamma \colon x_j \notin \cup_{i=1}^k {\mathcal{C}}_{\infty ,i} \}$, and
${\mathcal{C}}_{\infty ,k+1} = {\cal D}_{\hat
x_{k+1}}$. Define ${\text{\bf C}}_{\infty} = \{ {\mathcal{C}}_{\infty ,k}
\}_{k \ge 1}$. Clearly, $\Gamma = \cup_{k \ge 1}\; {\mathcal{C}}_{\infty
,k}$. It is also routine to check that ${\text{\bf C}}_{\infty}$
satisfies (\ref{2.onea}) and (\ref{2.a}). As for (\ref{2.four}), this
follows from (\ref{2.two}) and the fact that ${\cal D}_x \in \text{\bf
C}'_{k,k+1} \subseteq \text{\bf C}_{k+1}$ for all $k \ge \kappa(x)$
(by the definitions of $\kappa(x)$ and $\text{\bf C}'_{k,k+1}$).
\medskip

The proof of Lemma \ref{lemma1.1} involves an exponentially small
upper bound (in $k$) for the probability of having a cluster
starting at a fixed point and having mass $k$.

Let $\Xi (p)$ denote the event of full probability in
Lemma \ref{lemma1.1} where the above construction of
${\text{\bf C}}_{\infty}$ is well set. We then define
\begin{equation}
\label{chi}
\chi(\xi)=\inf\{k\ge 0\colon d(\mathcal{C},0) \ge M^{m(\mathcal{C})} \text{ for all } \mathcal{C} \in \text{\bf C}_{\infty} \text { with } m(\mathcal{C})>k\}
\end{equation}
with $\chi(\xi)=\infty$ if the above set is empty or $\xi \notin
\Xi(p)$.
\medskip

\noindent As a consequence of Lemma \ref{lemma1.1} one has
\begin{proposition}\label{KSV} Let $p <\frac{1}{64M^2}$.
Then
\medskip

\centerline{$\mathbb{P}_p(\xi\colon \chi(\xi)<\infty)=1$ and $\mathbb{P}_p (\chi(\xi) =0)>0 $.}
\end{proposition}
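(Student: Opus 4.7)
My plan rests on two ingredients. The first is Lemma \ref{lemma1.1}, which gives $\mathbb P_p(\Xi(p))=1$ for $3\le M<(64p)^{-1/2}$. The second, which is the quantitative heart of the matter and is announced in the paragraph just above the statement, is an exponentially small bound
\[
q_k\;:=\;\mathbb P_p\big(x\text{ is the start-point of some }\mathcal C\in\mathbf C_\infty\text{ with }m(\mathcal C)\ge k\big)\;\le\;C_0\,\beta^k,\qquad k\ge 1,
\]
valid uniformly for every $x\in\mathbb N$, with $\beta=\beta(p,M)<1$ and, crucially, $M\beta<1$ under the hypothesis $p<1/(64M^2)$.

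For the first assertion, introduce
\[
B_k:=\big\{\xi\in\Xi(p)\colon\exists\,\mathcal C\in\mathbf C_\infty(\xi),\ m(\mathcal C)=k,\ d(\mathcal C,0)<M^k\big\},\qquad k\ge 1.
\]
Directly from the definition of $\chi$, on $\Xi(p)$ one has $\{\chi=\infty\}=\limsup_k B_k$ and $\{\chi=0\}=\bigcap_{k\ge 1}B_k^c$. A cluster contributing to $B_k$ has its start-point in $\{1,\dots,M^k-1\}$, and the union bound gives
\[
\mathbb P_p(B_k)\;\le\;(M^k-1)\,q_k\;\le\;C_0(M\beta)^k.
\]
Since $M\beta<1$ the series $\sum_k\mathbb P_p(B_k)$ converges, so Borel--Cantelli yields $\mathbb P_p(\limsup_k B_k)=0$, hence $\mathbb P_p(\chi<\infty)=1$.

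For the positivity $\mathbb P_p(\chi=0)>0$, the total $\sum_k\mathbb P_p(B_k)$ need not be less than $1$, so I will condition on a long initial run of zeros. Set $E_N:=\{\xi_1=\cdots=\xi_N=0\}$, so that $\mathbb P_p(E_N)=(1-p)^N>0$ and, conditional on $E_N$, the cluster construction applied to $\xi$ is simply the construction applied to the independent tail $\tilde\xi:=(\xi_{N+1},\xi_{N+2},\dots)$ translated by $N$. In particular every $\mathcal C\in\mathbf C_\infty(\xi)$ has $\alpha(\mathcal C)\ge N+1$, so every cluster of mass $k$ with $M^k\le N+1$ automatically satisfies $d(\mathcal C,0)\ge M^k$. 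Applying the same union bound to $\tilde\xi$ only for $k$ with $M^k>N+1$,
\[
\mathbb P_p(\chi\ne 0\mid E_N)\;\le\;\sum_{k:\,M^k>N+1}M^k q_k\;\le\;C_0\!\!\sum_{k>\log_M(N+1)}(M\beta)^k\;\xrightarrow[N\to\infty]{}\;0,
\]
because $M\beta<1$. Picking $N$ so that this conditional probability is at most $1/2$ yields $\mathbb P_p(\chi=0)\ge\tfrac12(1-p)^N>0$.

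The only substantive obstacle is not in the scheme above but in the Appendix: establishing $q_k\le C_0\beta^k$ with $M\beta<1$ for $p<1/(64M^2)$. That is a recursive estimate on the probability of a level-$k$ cluster starting at a fixed point, controlling at each level both the number of constituents contributing to the would-be cluster and their spacing within distance $M^k$; granted that input, everything else is just a union bound together with the shift-and-independence packaged into $E_N$.
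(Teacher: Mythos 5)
Your proof is correct, and the two halves compare differently with the paper. For $\mathbb{P}_p(\chi<\infty)=1$ you do essentially what the paper does: the key input $q_k\le C_0\beta^k$ with $M\beta<1$ is exactly what \eqref{2.5} and \eqref{2.7} provide (the appendix writes $L$ for the spacing parameter called $M$ in Section 3, and $c_2>\log L$ is precisely $M\beta<1$), and summing $M^kq_k$ over large $k$ — whether you phrase it as Borel--Cantelli over your events $B_k$ or, as the paper does, as a direct bound on $\mathbb{P}_p(\exists\,\mathcal{C}_{\infty,j}\colon m>k,\ d(\mathcal{C}_{\infty,j},0)<M^{m})$ tending to $0$ — is the same estimate. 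For $\mathbb{P}_p(\chi=0)>0$, however, you take a genuinely different route. The paper argues softly: on $\{0<\chi<\infty\}$ it isolates the unique offending cluster $\mathcal{C}^*$, zeroes out the environment up to $\omega(\mathcal{C}^*)$ as in \eqref{2.39}, proves by an induction on levels (using \eqref{2.y} and \eqref{2.five}) that the modified environment has $\chi=0$, and then decomposes over $\omega(\mathcal{C}^*)=n$ to extract a deterministic $n$ with $\mathbb{P}_p(\chi(\xi^{(n)})=0)>0$, finishing by independence. You instead condition on the explicit event $E_N=\{\xi_1=\cdots=\xi_N=0\}$, use that the grouping construction is translation-equivariant (it only sees gaps between points of $\Gamma$, so on $E_N$ the clusters of $\xi$ are literally the clusters of the tail shifted by $N$ — note this is much easier than the paper's cluster-comparison, which has to compare two environments one of which still has ones near the boundary), observe that only masses $k$ with $M^k>N+1$ can violate the condition in \eqref{chi} (this also disposes of mass-$1$ clusters), and kill those by re-using the quantitative bound, getting $\mathbb{P}_p(\chi\neq0\mid E_N)\le C_0\sum_{k>\log_M(N+1)}(M\beta)^k\to0$. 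What each buys: your argument is shorter and fully quantitative (it even gives an explicit lower bound $\tfrac12(1-p)^N$), but it leans a second time on the exponential estimate with $M\beta<1$; the paper's argument for positivity needs only $\chi<\infty$ a.s.\ together with structural properties of $\mathbf{C}_\infty$, at the cost of the more delicate verification that $\chi(\widetilde\xi)=0$. Your deferral of the bound $q_k\le C_0\beta^k$ to the Appendix is legitimate, since the paper's own proof of Proposition \ref{KSV} invokes the same interior estimates from the proof of Lemma \ref{lemma1.1}.
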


Having in mind an application of the above grouping procedure to the
proof of the main result, and using the notation introduced
above, to each binary sequence $\xi\in\Xi(p)$ we associate the
sequence $\psi^\xi\in\Psi$ defined as follows:
\begin{equation}
\label{psinova}
\psi^\xi_{i}=
\begin{cases} m({\mathcal{C}}_{\infty ,j})
,&\mbox{ if } i=\hat x_j-\sum_{t=0}^{j-1}\text{diam}({\mathcal{C}}_{\infty ,t})\,;\\ 0, &\mbox{otherwise },
\end{cases}
\end{equation}
where $\text{diam}(C)$ denotes the diameter of $C$ (for the Euclidean distance).
As an immediate corollary of Proposition \ref{KSV} we have:

\begin{corolario}\label{sec}
If $0< p <\frac{1}{64M^2}$, then $\mathbb{P}_p\{\xi\in\Xi(p)\colon
\psi^\xi\in \Psi_M\}>0$.
\end{corolario}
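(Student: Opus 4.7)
The goal is to exhibit a positive-probability event on which $\psi^\xi \in \Psi_M$. The natural candidate is $E := \{\xi \in \Xi(p) : \chi(\xi) = 0\}$, which has $\mathbb{P}_p(E)>0$ by Proposition \ref{KSV} when $p<1/(64M^2)$. I would fix $\xi \in E$ and verify that $\psi^\xi$ satisfies both defining conditions (a) and (b') of Definition \ref{Mspaced}, using (\ref{2.four})--(\ref{2.five}) together with the defining property of $\chi$.

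Write $\mathbf{C}_\infty = \{\mathcal{C}_{\infty, j}\}_{j \geq 1}$ in increasing order of appearance, $m_j := m(\mathcal{C}_{\infty, j})$, and $d_{j, j+1} := d(\mathcal{C}_{\infty, j}, \mathcal{C}_{\infty, j+1})$. Unrolling \eqref{psinova} identifies the positions of the non-zero entries of $\psi^\xi$ as
$$i_j \;=\; \hat x_j - \sum_{t=1}^{j-1} \mathrm{diam}(\mathcal{C}_{\infty, t}) \;=\; \hat x_1 + \sum_{t=1}^{j-1} d_{t, t+1},$$
each carrying the value $m_j$. Consecutive non-zero positions already satisfy $i_{j+1} - i_j = d_{j, j+1} \geq M \geq 2$ by (\ref{2.four}), confirming $\psi^\xi \in \Psi$.

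For condition (a), given $a < b$ with $\psi^\xi$ non-zero at $i_a$ and $i_b$, I would compute $i_b - i_a = \sum_{t=a}^{b-1} d_{t, t+1}$ and apply (\ref{2.four}) to the pair $(\mathcal{C}_{\infty, a}, \mathcal{C}_{\infty, b})$ via the identity $d(\mathcal{C}_{\infty, a}, \mathcal{C}_{\infty, b}) = \sum_{t=a+1}^{b-1} \mathrm{diam}(\mathcal{C}_{\infty, t}) + (i_b - i_a) \geq M^{\min\{m_a, m_b\}}$, combined with an inductive bound of the form $\mathrm{diam}(\mathcal{C}) \leq M^{m(\mathcal{C})}$ extracted from the grouping rule (constituents at level $k+1$ lie within distance $M^{k+1}$, and mass grows at least as fast as level). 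For condition (b'), if $j^*$ is the smallest index with $m_{j^*} \geq k$ then $i_k(\psi^\xi) = i_{j^*}$, and $\chi(\xi) = 0$ gives $\hat x_{j^*} \geq M^{m_{j^*}} \geq M^k$; combining this with the same diameter bound applied to $\mathcal{C}_{\infty,1},\ldots,\mathcal{C}_{\infty,j^*-1}$ (each of mass strictly less than $k$, hence at distances $\geq M$ from each other by (\ref{2.four})) yields $i_{j^*} \geq M^k$.

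The main technical difficulty, present in both (a) and (b'), is this diameter bookkeeping: one must verify that the subtracted sums $\sum_t \mathrm{diam}(\mathcal{C}_{\infty, t})$ are small enough not to spoil the $M^{\min\{m_a,m_b\}}$ lower bound supplied by (\ref{2.four}). This relies on the hierarchical structure of the construction in Section \ref{III}, where at each level the gaps of size $M^{k+1}$ between constituents dominate the accumulated sub-level diameters, so that the telescoping works in our favor.
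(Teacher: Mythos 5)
Your strategy — restrict to the positive-probability event $\{\chi(\xi)=0\}$ guaranteed by Proposition \ref{KSV} and then read off conditions (a) and ($b'$) of Definition \ref{Mspaced} from \eqref{2.four}--\eqref{2.five} and \eqref{chi} — is exactly the route the paper has in mind when it calls this an ``immediate corollary.'' You have also, correctly, located the one non-obvious point: the non-zero positions of $\psi^\xi$ are not the start-points $\hat x_j$ of the clusters, but the \emph{shifted} positions $i_j = \hat x_j - \sum_{t<j}\mathrm{diam}(\mathcal{C}_{\infty,t})$, so that $i_b - i_a = d(\mathcal{C}_{\infty,a},\mathcal{C}_{\infty,b}) - \sum_{a<t<b}\mathrm{diam}(\mathcal{C}_{\infty,t})$, and \eqref{2.five} only controls $d(\mathcal{C}_{\infty,a},\mathcal{C}_{\infty,b})$. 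The subtraction is the whole issue, and the paper does not address it.

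Where the proposal falls short is the final sentence, which asserts without argument that ``the gaps of size $M^{k+1}$ between constituents dominate the accumulated sub-level diameters, so that the telescoping works in our favor.'' This is precisely the step that needs proof, and it does not hold as stated. The inequality $d(\mathcal{C}_{\infty,a},\mathcal{C}_{\infty,b})\ge M^{\min\{m_a,m_b\}}$ coming from \eqref{2.five} can be \emph{tight}: two consecutive clusters in $\mathbf{C}_{r-1,r}$ that fail to merge into an $r$-run can sit at distance exactly $M^r$. If, in addition, there is an intermediate maximal cluster of positive diameter between them (e.g.\ with $M=3$, take $\mathcal{C}_{\infty,1}=\{27,29,31\}$, $\mathcal{C}_{\infty,2}=\{43,45\}$, $\mathcal{C}_{\infty,3}=\{58,60,62\}$: one checks $\chi=0$, $m_1=m_3=3$, $m_2=2$, $d(\mathcal{C}_{\infty,1},\mathcal{C}_{\infty,3})=27=M^3$, but $i_3-i_1 = 27 - \mathrm{diam}(\mathcal{C}_{\infty,2}) = 25 < M^3$), then condition \eqref{10k} fails for $\psi^\xi$. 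So $\{\chi(\xi)=0\}\not\subseteq\{\psi^\xi\in\Psi_M\}$, and the diameter bookkeeping you flagged cannot be dispatched by a hand-wave; it requires either a strengthening of \eqref{2.five} (say to $d(\mathcal{C}_{\infty,a},\mathcal{C}_{\infty,b})\ge M^{\min\{m_a,m_b\}}+\sum_{a<t<b}\mathrm{diam}(\mathcal{C}_{\infty,t})$, which would have to be re-derived from the grouping), or an adjustment of the definition of $\chi$ and/or $\psi^\xi$, or a retreat to a smaller positive-probability event on which the offending configurations are excluded. In short: you identified the right difficulty, but you did not close it, and closing it is not a matter of bookkeeping alone.
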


Now we give an upper bound on the cardinality of a cluster in terms of its mass.
For this we simply use the estimate on the
cluster diameter obtained in the proof of Lemma \ref{lemma1.1}.

\begin{lema}\label{cotadeuns}
Let $0< p <\frac{1}{64M^2}$ and $\xi \in \Xi(p)$. Then
\begin{equation*}
\sup\{|{\mathcal{C}}|\colon {\mathcal{C}}\in {\mathbf C}_\infty, m({\mathcal{C})}=k \} \le 3M^{k-1}
\end{equation*}
for all $k\in\NN$.
\end{lema}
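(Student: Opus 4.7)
The plan is to reduce the cardinality bound to a diameter estimate on clusters which is established, by induction on the mass, during the proof of Lemma \ref{lemma1.1} (carried out in the Appendix). The key claim I would invoke is that every cluster $\mathcal{C} \in {\text{\bf C}}_\infty$ with $m(\mathcal{C}) = k$ satisfies $\mathrm{diam}(\mathcal{C}) \le 3M^{k-1} - 1$. Granted this bound, the lemma follows in one line: by property \eqref{2.z} the cluster $\mathcal{C}$ equals $\Gamma \cap [\min \mathcal{C}, \max \mathcal{C}]$, which is contained in an integer interval of length $\mathrm{diam}(\mathcal{C}) + 1$, so
\[
|\mathcal{C}| \le \mathrm{diam}(\mathcal{C}) + 1 \le 3 M^{k-1}.
\]

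To establish the diameter estimate I would argue by induction on $k$. For $k=1$ the cluster is a singleton and the bound is immediate, and for a level-$1$ cluster of mass $k$ the $k$ points lie at consecutive distances $< M$, so the diameter is at most $(k-1)(M-1)$, which is $\le 3M^{k-1} - 1$ for $M \ge 3$. For a cluster $\mathcal{C}$ of level $\ell \ge 2$ and mass $k$, recall that it is the span of a maximal $\ell$-run of $n \ge 2$ constituents $\mathcal{C}_1, \dots, \mathcal{C}_n$ of level at most $\ell - 1$ and masses $m_j \ge \ell$ satisfying $\sum_j m_j = k + (\ell - 1)(n - 1)$ by \eqref{2.foura}, with consecutive constituents at distance $< M^\ell$. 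Since $m_j < k$ by \eqref{2.fourb}, the inductive hypothesis applies to each constituent, yielding
\[
\mathrm{diam}(\mathcal{C}) \le \sum_{j=1}^n \bigl(3M^{m_j - 1} - 1\bigr) + (n-1)(M^\ell - 1).
\]

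I would then bound $\sum_j M^{m_j - 1}$ by its convex maximum under the constraints $\sum_j m_j = k + (\ell - 1)(n-1)$ and $m_j \ge \ell$, which is $M^{k-n} + (n-1) M^{\ell - 1}$ (one $m_j$ at $k - n + 1$, the others at $\ell$), and use the inequality $k \ge \ell + n - 1$ together with $M \ge 3$ to absorb the remainder into $3M^{k-1} - 1$. The main obstacle is the algebraic check in the tightest case $n = 2$, $k = \ell + 1$, in which $m_1 = m_2 = \ell$ and the inductive inequality reduces to $6 M^{\ell - 1} + M^\ell - 3 \le 3 M^\ell - 1$, i.e., $M \ge 3 - M^{-(\ell-1)}$; this is exactly where the hypothesis $M \ge 3$ is needed (and precisely why the lemma's assumption is phrased as $p < 1/(64M^2)$ with $M \ge 3$). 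All remaining cases ($n \ge 3$ or $k > \ell + 1$) afford strictly more slack and are routine.
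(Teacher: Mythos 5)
Your proposal is correct and follows essentially the same route as the paper: the paper's proof of this lemma simply cites the diameter estimate \eqref{2.6}, which is established in the Appendix by the very same induction on the mass (see \eqref{2.25}, decomposing a cluster into constituent diameters plus at most $n-1$ gaps smaller than $M^{\ell}$), and the cardinality bound is then the diameter plus one. The only cosmetic differences are that the paper bounds every constituent's diameter uniformly via $m_j \le k-n+1$ instead of your convexity step, and phrases the inductive statement for all clusters of $\cup_{\ell}\mathbf{C}_\ell$ (as your induction implicitly requires, since constituents need not belong to $\mathbf{C}_\infty$).
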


\proof See \eqref{2.6} and \eqref{2.25} in the appendix.\endproof

\section{Proof of Theorem \ref{principal}
\label{IV}}

\noindent The following Definition and Lemma will play important role
in the proof of Theorem \ref{principal}:
\begin{definicao}
For $\zeta, \psi \in \Psi$ we say that $\psi \preceq_{_{M}}\zeta$
if for any $j \ge 0$ the following holds:
\begin{align}
\psi_j = 0 & \Leftrightarrow \zeta_j =0, \notag
\\
\psi_j = k & \Rightarrow k \leq \zeta_j \leq 3M^{k-1}.
\end{align}
\end{definicao}

\begin{lema}
\label{emagrecimento} Let $p< \frac{1}{64M^2}$ and $\xi \in \Xi_\infty$
such that $\chi(\xi)=0$. Let $j_1(\xi)< 1+j_2(\xi) < 2+j_3(\xi)<\dots $
denote the ordered
elements of $\cup_{i\ge
1}(\text{span}(\mathcal{C}_{\infty,i})\setminus
\mathcal{C}_{\infty,i})$. Set $\xi^{(0)}=\xi$ and for each $n \ge 1$
\begin{equation}
\xi^{(n)} := \triangle^{\mathbf{0}}_{j_n(\xi)}(\xi^{(n-1)}).
\end{equation}
The limit $\tilde\xi:= \lim_{n \to \infty} \xi^{(n)}$ exists
in $\Xi_\infty$ and
\begin{equation*}
\psi^\xi\preceq_{_{M}}f(\tilde \xi)=:\tilde \psi,
\end{equation*}
where $\psi^\xi$  is given by \eqref{psinova} and the function $f$ was defined
in \eqref{f1}--\eqref{f2}.
\end{lema}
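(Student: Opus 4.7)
The plan is to identify $\tilde\xi$ explicitly, verify it lies in $\Xi_\infty$, and then match $\tilde\psi := f(\tilde\xi)$ entry-by-entry against $\psi^\xi$.

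\textbf{Convergence of $\xi^{(n)}$.} First I would observe that the position $p_n := j_n(\xi) + n - 1$ in the original sequence $\xi$ of the zero deleted at step $n$ satisfies $p_1 < p_2 < \cdots$ and $p_n \to \infty$. Consequently, for every fixed $i \in \NN$ the coordinate $\xi^{(n)}_i$ becomes independent of $n$ once $p_n > i$, so $\tilde\xi := \lim_n \xi^{(n)}$ is well defined. By construction $\tilde\xi$ is obtained from $\xi$ by erasing precisely the zeros of $\bigcup_i(\text{span}(\mathcal{C}_{\infty,i}) \setminus \mathcal{C}_{\infty,i})$; in particular, each cluster $\mathcal{C}_{\infty,j}$ becomes a maximal run of $|\mathcal{C}_{\infty,j}|$ consecutive ones in $\tilde\xi$, separated from the next by the untouched between-cluster zeros of $\xi$. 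No ones are ever removed, and by \eqref{2.four} consecutive cluster spans in $\xi$ are separated by at least $M \ge 3$ zeros, so infinitely many zeros survive in $\tilde\xi$. Hence $\tilde\xi \in \Xi_\infty$ and $\tilde\psi := f(\tilde\xi)$ is well defined.

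\textbf{Matching the positions of the nonzero entries.} Next I would compute, for each $j \ge 1$, the index in $\tilde\psi$ of its $j$-th nonzero entry: its value is the length of the $j$-th maximal run of ones in $\tilde\xi$, namely $|\mathcal{C}_{\infty,j}|$, while its index equals the number of zeros of $\tilde\xi$ strictly preceding that run, plus $j$. The number of zeros of $\xi$ in $[1, \hat x_j - 1]$ is $\hat x_j - 1 - \sum_{t=1}^{j-1} |\mathcal{C}_{\infty,t}|$, and exactly $\sum_{t=1}^{j-1}(\text{diam}(\mathcal{C}_{\infty,t}) + 1 - |\mathcal{C}_{\infty,t}|)$ of them lie in earlier cluster spans and have been erased on passing to $\tilde\xi$; subtracting and adding $j$ yields the index
\[
\hat x_j - \sum_{t=1}^{j-1}\text{diam}(\mathcal{C}_{\infty,t}),
\]
which is exactly the position where $\psi^\xi$ places its $j$-th nonzero entry, cf.\ \eqref{psinova}. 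At every other index both sequences vanish: $\tilde\psi$ because $\tilde\xi$ reads a zero there, $\psi^\xi$ by its definition. This establishes the zero/nonzero matching required by the definition of $\preceq_{M}$.

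\textbf{Value comparison and conclusion.} At a common nonzero index (corresponding to the $j$-th cluster) one has $\psi^\xi = m(\mathcal{C}_{\infty,j})$ and $\tilde\psi = |\mathcal{C}_{\infty,j}|$. A short induction on the level proves $m(\mathcal{C}) \le |\mathcal{C}|$ for every cluster: equality holds at levels $0$ and $1$, and at level $k+1$ the mass rule \eqref{2.foura} combined with the inclusion $\mathcal{C}^{k+1}_i = \text{span}(r^{k+1}_i) \cap \Gamma \supseteq \bigcup_s \mathcal{C}_{k,j_s}$ gives $m(\mathcal{C}^{k+1}_i) \le \sum_s m(\mathcal{C}_{k,j_s}) \le |\mathcal{C}^{k+1}_i|$. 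Combined with Lemma \ref{cotadeuns}, which supplies $|\mathcal{C}_{\infty,j}| \le 3 M^{m(\mathcal{C}_{\infty,j}) - 1}$, the matched entries satisfy $\psi^\xi \le \tilde\psi \le 3 M^{\psi^\xi - 1}$, which together with the zero-for-zero matching is exactly $\psi^\xi \preceq_{M} \tilde\psi$. The hard part is purely the positional bookkeeping in the second paragraph; everything else is a direct consequence of the grouping construction and Lemma \ref{cotadeuns}.
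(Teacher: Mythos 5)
Your proposal is correct and takes essentially the same route as the paper, which disposes of this lemma in one line ("follows at once from the previous construction and Lemma \ref{cotadeuns}"): your positional bookkeeping identifying the $j$-th nonzero letter of $f(\tilde\xi)$ as sitting at index $\hat x_j-\sum_{t<j}\text{diam}(\mathcal{C}_{\infty,t})$ with value $|\mathcal{C}_{\infty,j}|$, together with $m(\mathcal{C})\le|\mathcal{C}|\le 3M^{m(\mathcal{C})-1}$, is exactly the verification the authors leave implicit. One cosmetic point: the stabilization of $\xi^{(n)}_i$ should be justified by $j_n(\xi)\to\infty$ (which holds because infinitely many positions, e.g.\ all of $\Gamma$, are never deleted) rather than by $p_n>i$, since the $n$-th deletion acts at index $j_n(\xi)$ of the already shifted sequence.
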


\begin{proof}
The proof follows at once from the previous construction and Lemma
\ref{cotadeuns}.
\end{proof}

%%%LEMA DO VLADAS
%\begin{lema} \label{central} Fix $M > 0$ and let $p < \frac{1}{64M^2}$.
%Then for $\mathbb{P}_p$ almost all $\xi \in \Xi_{\infty}$ there
%exists  (possibly infinite) sequence of indices $j_1(\xi), \,
%j_2(\xi) , \dots $ such that the sequence
%\begin{equation}
%\xi^{(n)} := \triangle^0_{j_n(\xi)}(\xi^{(n-1)})
%\end{equation}
%converges, {\it i.e.} the limit $\xi^{(\infty)} := \lim_{n \to
%\infty} \xi^{(n)}$ exists, and is such that there exists $ \psi \in
%\Psi_M^{k} \mbox{ for some } k \leq \infty$, such that
%$$
%f(\xi^{(\infty)})  \preceq_{_{M}} \psi .
%$$
%In other words
%\begin{equation}
%\mathbb{P}_p \big(\xi \in\Xi_\infty: \exists \psi \in \Psi_M^k, \;
%\mbox{ for some } k< + \infty, \mbox{ such that }  f(\xi^{(\infty)})
%\preceq_{_{M}} \psi \big) =1.
%\end{equation}
%Moreover
%\begin{equation}
%\mathbb{P}_p \big(\xi \in\Xi_\infty: \exists \psi \in \Psi_M \mbox{
%such that } f(\xi^{(\infty)}) \preceq_{_{M}} \psi^{(\xi)} \in
%\Psi_M \big) > 0.
%\end{equation}
%\end{lema}

%\noindent Explicit construction of $\xi^{(\infty)}$ and proof of
%Lemma \ref{central}  are given in the next Section.

\medskip

\noindent We now construct the binary sequence $\eta$ that appears in
Theorem \ref{principal}. It is
obtained from $\zeta{(L)}$, see \eqref{hierarquica},
replacing each entry $(\zeta{(L)})_j=k$ by a string of $3M^{k-1}$
consecutive ones, with $M=3(L+1)$, and correspondingly shifting the
rest of the sequence to the right. That is, fix $L \geq 2$,
 and
define $\tilde\zeta{(L)} \in \Psi$  as follows:
\begin{equation}
\label{tildezeta} (\tilde\zeta{(L)})_j =
\begin{cases}
3M^{k-1} ,&\mbox{ if } L^k |j \; \text{ and }  \; L^{k+1}\nmid j,
\\
0\ , &\mbox{ if } L \nmid j,
\end{cases}
\end{equation}
for all $j \ge 1$, with $M=3(L+1)$. We then let
$\eta(L)$ be the unique element of $\Xi_\infty$ such that $\tilde \zeta(L)=f(\eta(L))$,
where
 $f$ was defined by \eqref{f1}--\eqref{f2} in Section \ref{II}.

\medskip

\noindent We can now state and prove the following result.

\begin{teorema}\label{final} Let $L\geq 2$. If $p<\frac{1}{576(L+1)^2}$ and the deterministic binary
sequence $\eta{(L)}$ is defined as above, then
\begin{equation}
\mathbb{P}_{p}\{\xi\in\Xi\colon (\eta{(L)},\xi)\mbox{ is
compatible}\}>0.
\end{equation}
\end{teorema}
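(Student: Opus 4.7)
The plan is to reduce the statement to Theorem \ref{weight} via the grouping procedure of Section \ref{III}, together with Lemma \ref{emagrecimento}. Put $M = 3(L+1)$, so that the hypothesis $p < \tfrac{1}{576(L+1)^2}$ reads $p < \tfrac{1}{64M^2}$, the regime covered by Proposition \ref{KSV} and Corollary \ref{sec}. On the positive-probability event $A := \{\xi \in \Xi(p) : \chi(\xi) = 0,\; \psi^\xi \in \Psi_M\}$, Lemma \ref{emagrecimento} produces a sequence $\tilde\xi \in \Xi_\infty$ obtained from $\xi$ by iterated zero-annihilations such that $\tilde\psi := f(\tilde\xi)$ satisfies $\psi^\xi \preceq_{_M} \tilde\psi$; in particular $\tilde\psi_j = 0 \Leftrightarrow \psi^\xi_j = 0$, and $\psi^\xi_j = k \ge 1$ forces $k \le \tilde\psi_j \le 3M^{k-1}$. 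Recalling that by construction $f(\eta(L)) = \tilde\zeta(L)$, the task is reduced via Proposition \ref{pesos2} to proving compatibility of the weighted-word pair $(\tilde\zeta(L),\tilde\psi)$.

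The heart of the argument is a comparison of the two percolation configurations $\omega_{\zeta(L),\psi^\xi}$ and $\omega_{\tilde\zeta(L),\tilde\psi}$. I claim that every vertex which is open in the first is open in the second, and that the two notions of \emph{heavy} vertex (hence the permitted-path property) coincide. The second assertion is immediate from $\tilde\psi_{v_2} = 0 \Leftrightarrow \psi^\xi_{v_2} = 0$. For the first, suppose $\zeta(L)_{v_1} \ge \psi^\xi_{v_2}$ at a vertex $v = (v_1,v_2)$ with $v_1,v_2 \ge 1$. If $\psi^\xi_{v_2} = 0$ the inequality is trivial; otherwise write $k := \psi^\xi_{v_2} \ge 1$, so $\zeta(L)_{v_1} \ge k$ forces $L^k \mid v_1$, and hence $\tilde\zeta(L)_{v_1} = 3M^{k'-1}$ with $k' := \zeta(L)_{v_1} \ge k$. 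Therefore
$$
\tilde\zeta(L)_{v_1} \;=\; 3M^{k'-1} \;\ge\; 3M^{k-1} \;\ge\; \tilde\psi_{v_2},
$$
the last inequality being precisely the upper bound built into $\preceq_{_M}$. Since $\psi^\xi \in \Psi_M$, Theorem \ref{weight} provides an infinite open permitted path $\pi$ in $\omega_{\zeta(L),\psi^\xi}$ starting at the origin; by the comparison above, the same $\pi$ is open and permitted in $\omega_{\tilde\zeta(L),\tilde\psi}$, and Lemma \ref{compatibility-lemma} yields compatibility of $(\tilde\zeta(L),\tilde\psi)$, whence of $(\eta(L),\tilde\xi)$ via Proposition \ref{pesos2}.

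To pass from $(\eta(L), \tilde\xi)$ to $(\eta(L), \xi)$, I would observe that $\tilde\xi$ is the product-topology limit of sequences $\xi^{(n)}$ each obtained from $\xi$ by finitely many zero-annihilations. A routine diagonal extraction therefore lets every approximating $\tilde\xi^{(k)}$ appearing in the compatibility realization of $(\eta(L), \tilde\xi)$ be itself exhibited as the result of finitely many zero-annihilations applied to $\xi$, producing the sequences required by Definition \ref{compat-binary} and converging in $\Xi$ to the same common limit as the $\eta^{(k)}$. Since $A$ has positive $\mathbb{P}_p$-probability, this completes the argument. The main conceptual point, and the only nontrivial step, is the match between the two percolation systems in the second paragraph: the weights $3M^{k-1}$ in $\tilde\zeta(L)$ were chosen precisely to saturate the upper bound in $\preceq_{_M}$, so that the open- and heavy-vertex structures transfer verbatim and the percolation machinery of Section \ref{II} can be reused without modification.
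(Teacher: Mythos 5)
Your argument is correct, and it assembles exactly the same ingredients as the paper: Corollary \ref{sec} to get $\psi^\xi\in\Psi_M$ with positive probability for $M=3(L+1)$ (so that $p<\tfrac{1}{576(L+1)^2}$ is precisely $p<\tfrac1{64M^2}$), Lemma \ref{emagrecimento} to produce $\tilde\xi$ and $\tilde\psi=f(\tilde\xi)$ with $\psi^\xi\preceq_{_{M}}\tilde\psi$, and Proposition \ref{pesos2} to return to binary sequences. The one place you deviate is the transfer step: the paper first invokes Corollary \ref{pesos} to get compatibility of $(\zeta(L),\psi^\xi)$ and then asserts, via Lemma \ref{emagrecimento}, that compatibility passes to $(\tilde\zeta(L),\tilde\psi)$, a step it leaves as ``simple to check''; you instead stay at the percolation level, observing that openness ($\zeta(L)_{v_1}\ge\psi^\xi_{v_2}$ forces $L^k\mid v_1$, hence $\tilde\zeta(L)_{v_1}=3M^{k'-1}\ge 3M^{k-1}\ge\tilde\psi_{v_2}$) and heaviness transfer verbatim, so the infinite open permitted path from Theorem \ref{weight} works for $\omega_{\tilde\zeta(L),\tilde\psi}$ and Lemma \ref{compatibility-lemma} applies directly. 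This is a sound and arguably cleaner way to make the paper's unproved transfer precise, since it avoids reasoning about arbitrary compatibility witnesses (where $\triangle^{\mathbf 0}$-moves can merge weights of $\psi^\xi$ and a direct comparison would need $M^{k_1+k_2-1}\ge M^{k_1-1}+M^{k_2-1}$-type estimates). You also spell out, via the diagonal argument, the passage from compatibility of $(\eta(L),\tilde\xi)$ to that of $(\eta(L),\xi)$, a point the paper leaves implicit; that step is needed because $\tilde\xi$ is only a limit of finitely-many-zero-deletions of $\xi$, and your treatment of it is correct.
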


\begin{proof} Let $L \ge 2$ and $M=3(L+1)$. If $p<\frac{1}{64M^2}$,
Corollary \ref{sec} says that $\mathbb{P}_p\{\xi\in\Xi(p)\colon
\psi^\xi\in \Psi_{M}\}>0$, and from Corollary \ref{pesos} it
follows that
\begin{equation*}
\mathbb{P}_p\{\xi\in\Xi(p) ;(\zeta{(L)},\psi^\xi)\mbox{ is
compatible}\}>0.
\end{equation*}
On the other hand, using Lemma \ref{emagrecimento} it is simple to check
that if the pair $(\zeta{(L)},\psi^\xi)$ is compatible, then so is
$(\tilde \zeta{(L)},\tilde \psi)$. The proof then follows
by recalling Proposition \ref{pesos2}.
\end{proof}

\medskip

%\noindent {\bf Remark.} {\it  Compatibility for sequences on
%$\{0,1\}^\mathbb{Z}$.} If we consider the Bernoulli measure
%$\mathbb{P}_p$ over two-sided sequences and use a suitable bilateral
%version of the hierarchical sequence $\eta(L)$, the extension of
%Theorem \ref{principal} holds and in this case, due to the
%ergodicity one can prove that compatibility holds a.s. for small $p$
%and $L$ as in Theorem \ref{final}.

%\section{Discrete Hausdorff dimension. Fractality of $\eta$
%\label{V}}

To conclude the proof of
Theorem \ref{principal}, it now remains to check that the set of zeroes $\mathcal{Z}(L)$ in the sequence
$\eta(L)$ in Theorem \eqref{final} has discrete Hausdorff dimension that tends to
one as $L$ tends to infinity. As one verifies at once, its
asymptotic density is zero.

We use the notion of discrete Hausdorff dimension, as introduced in \cite{BT}, and related results
from \cite{ACHR}. Among the simplest measures of the asymptotic size of a set $A\subseteq \ZZ$, let
us recall the following:

\begin{definicao} \label{massdim} The {\it lower} and {\it upper} mass dimensions of
$A\subseteq \ZZ$ can be defined as
\begin{equation}
{\rm{dim}}_{\rm{LM}}(A) = \liminf_{n\to \infty} \frac{\log |A\cap[-n/2,
n/2)|}{\log n},
\end{equation}
\begin{equation}
{\rm{dim}}_{\rm{UM}}(A) = \limsup_{n\to \infty} \frac{\log |A\cap[-n/2,
n/2)|}{\log n},
\end{equation}
and if ${\rm{dim}}_{\rm{LM}}(A) = {\rm{dim}}_{\rm{UM}}(A)$, we call it the mass
dimension of $A$.
\end{definicao}

\noindent A simple computation shows that\footnote{To match exactly to the previous definition, we may take the two-sided version of the hierarchical sequence.}
$${\rm{dim}}_{\rm{LM}} (\mathcal{Z}{(L)})={\rm{dim}}_{\rm{UM}} (\mathcal{Z}{(L)}) = \frac{\log L}{\log (3(L+1))}.$$

\noindent To recall the definition of discrete Hausdorff dimension  introduced in \cite{BT}
we need some notation:

%and packing dimensions},
%as well as {\it entropy indices},

\noindent Let $\mathcal{I}$ denote the set of all intervals $[x,y),
\; x,y \in \ZZ$. Given positive integers $r, \, n, \; r\ge 2$, we
denote
\begin{align}
I_1^{(r)} & = [-r, r) \notag
\\
I_n^{(r)} & = [-r^n, r^n) \setminus [-r^{n-1}, r^{n-1}), \; n \geq
2. \notag
\end{align}
Given $\alpha >0$ and $A,F \subseteq \ZZ$, set
\begin{equation}
\nu_\alpha (A,F) = \inf \Big \{ \sum_i (d(B_i))^\alpha : B_i \in
\mathcal{I}, A \cap F \subseteq \cup_i B_i  \Big \} (d(F))^{-\alpha},
\end{equation}
and
\begin{equation}
m_\alpha^{(r)} (A) = \sum_{n=1}^\infty \nu_\alpha (A,I_n^{(r)}).
\end{equation}
\begin{definicao} \label{hausdorff} The discrete Hausdorff dimension of a set $A$ is defined by
\begin{equation}
{\rm{dim}}_{\rm{H}} (A) = \inf \{ \alpha >0 \colon m_\alpha^{(r)}(A) < \infty \}.
\end{equation}
\end{definicao}
\smallskip

\noindent Another useful notion is

\begin{definicao} \label{entropy} The upper entropy index of a set $A$ is defined by
\begin{equation}
\Delta(A) = \inf \{ \alpha > 0: \max_{1 \le d \le
r^{n(1-\epsilon)}} \{(dr^{-n})^\alpha N(d, A \cap I_n^{(r)}) \to 0
\text{ for each } \epsilon >0     \} \},
\end{equation}
where $N(d, A)$ denotes the maximum number of disjoint
intervals in $\mathcal{I}$ of length $2d$ and with centers in $A$.
\end{definicao}

\noindent {\bf Remark.} It is easy to see that
\begin{equation}
0 \le {\rm{dim}}_{\rm {H}} (A) \leq {\rm{dim}}_{\rm{UM}} (A) \le \Delta(A) \le 1.
\end{equation}

There is also a related notion introduced in \cite{BT}, called {\it discrete
packing dimension}, denoted by ${\rm{dim}}_{\rm{p}}$, which uses the concept of
packing measure suitably adapted to the discrete setup (see \cite{BT}, Sect. 3, p. 130).
This notion was used by Barlow and Taylor \cite{BT} to define fractal sets in $\ZZ$, as those for
which there is equality of the Hausdorff and packing dimensions. On the other hand,
the upper entropy index $\Delta$ always coincides with the packing dimension,
as proven in \cite{BT} (Lemma 3.1, p. 131). Thus, one may equivalently say:
\begin{definicao}
\label{fractal} \cite{BT}  The set $A$ is called fractal if
${\rm{dim}}_{\rm{H}} (A) = \Delta(A) = {\rm{dim}}_{\rm{p}} (A)$.
\end{definicao}

\smallskip

\noindent To determine the Hausdorff dimension of $\mathcal{Z}(L)$ we use
the following result from \cite{ACHR} (a corollary of Theorem 4.1 in \cite{BT})

\begin{proposition} \label{C2} {\rm (Part of Corollary 2 of \cite{ACHR})}
Let $A \subseteq \mathbb{Z}$. Suppose there are positive constants
$c, \;c'$ such that for each large $n \in \mathbb{N}$ and all
integers $x$,
\begin{equation}
\label{eqC2}
| A \cap[-r^n, r^n) | \ge c'r^{n\alpha} \text{ and } | A \cap[x-r^n,
x+r^n) | \le cr^{n\alpha}.
\end{equation}
Then ${\rm{dim}}_{\rm{H}} (A) = {\rm{dim}}_{\rm{UM}} (A)$.
\end{proposition}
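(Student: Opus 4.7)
The plan is to establish the two inequalities ${\rm{dim}}_{\rm{H}}(A)\le{\rm{dim}}_{\rm{UM}}(A)$ and ${\rm{dim}}_{\rm{H}}(A)\ge{\rm{dim}}_{\rm{UM}}(A)$ separately; together they give the claimed equality. The first inequality is noted in the Remark preceding the statement as a general fact, so the work is really in proving the lower bound on the Hausdorff dimension. Along the way, the two hypotheses in \eqref{eqC2} will naturally pin both dimensions to the value $\alpha$.

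For the upper mass dimension, I would use the upper-bound hypothesis $|A\cap[x-r^n,x+r^n)|\le cr^{n\alpha}$ with $x=0$. Given an integer $N$, pick $n$ so that $r^n$ is the smallest power of $r$ exceeding $N/2$; then $|A\cap[-N/2,N/2)|\le cr^{n\alpha}\le c r^\alpha (N/2)^\alpha$, and dividing $\log$ by $\log N$ yields ${\rm{dim}}_{\rm{UM}}(A)\le\alpha$.

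For the lower bound on the Hausdorff dimension, I want to show that $m_{\alpha'}^{(r)}(A)=+\infty$ for every $\alpha'<\alpha$, which forces ${\rm{dim}}_{\rm{H}}(A)\ge\alpha$. Fix such an $\alpha'$. For each $n$, consider an arbitrary cover $\{B_i\}_{i\ge 1}$ of $A\cap I_n^{(r)}$ by intervals $B_i\in\mathcal{I}$ of diameter $d_i$. Without loss of generality we may assume each $B_i$ meets $A$ (else drop it) and $d_i\le 2r^n$ (else replace it by $B_i\cap I_n^{(r)}$). Choose for each $i$ an integer $n_i$ so that $r^{n_i}$ is comparable to $d_i$, and apply the upper-bound hypothesis at the center of $B_i$ to obtain
\begin{equation*}
|A\cap B_i|\le c\, r^{n_i\alpha}\le c''\, d_i^{\alpha}.
\end{equation*}
Summing over $i$ and invoking $d_i^\alpha=d_i^{\alpha'}\, d_i^{\alpha-\alpha'}\le d_i^{\alpha'}(2r^n)^{\alpha-\alpha'}$,
\begin{equation*}
|A\cap I_n^{(r)}|\le \sum_i|A\cap B_i|\le c''(2r^n)^{\alpha-\alpha'}\sum_i d_i^{\alpha'}.
\end{equation*}
On the other hand, the lower-bound hypothesis $|A\cap[-r^n,r^n)|\ge c'r^{n\alpha}$ together with the already-proven upper bound for the inner ball $[-r^{n-1},r^{n-1})$ yields $|A\cap I_n^{(r)}|\ge C\,r^{n\alpha}$ for all sufficiently large $n$, provided $r$ is taken large enough that $c'-c\,r^{-\alpha}>0$. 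Combining the two estimates and using the definition of $\nu_{\alpha'}$,
\begin{equation*}
\nu_{\alpha'}(A,I_n^{(r)})=\inf_{\{B_i\}}\frac{\sum_i d_i^{\alpha'}}{(2r^n)^{\alpha'}}\ge \frac{C\,r^{n\alpha}}{c''\,2^{\alpha}r^{n\alpha}}=:\kappa>0,
\end{equation*}
uniformly in $n$. Therefore $m_{\alpha'}^{(r)}(A)=\sum_{n\ge 1}\nu_{\alpha'}(A,I_n^{(r)})=+\infty$, hence ${\rm{dim}}_{\rm{H}}(A)\ge\alpha'$; letting $\alpha'\uparrow\alpha$ finishes the proof.

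The main obstacle is the second step, and in particular the bookkeeping that lets one apply the upper-bound hypothesis once per covering interval with the right scale $n_i$ and absorb all constants uniformly, even when the cover consists of intervals of many different sizes. This is the discrete analogue of the classical Frostman lemma, for which one usually dyadically groups the $B_i$ by size; here the cleaner route is to bound $|A\cap B_i|$ directly by $d_i^\alpha$ via hypothesis \eqref{eqC2} applied at each interval's center, and to trade one power of $d_i$ for the ambient scale $r^n$ using $\alpha'<\alpha$, which is exactly what makes the shell contributions in $m_{\alpha'}^{(r)}$ fail to be summable.
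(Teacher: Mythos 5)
Your proof is correct in strategy and essentially in execution. Note, however, that the paper does not supply its own proof of Proposition~\ref{C2}: it is quoted verbatim from \cite{ACHR} (``Part of Corollary~2''), so there is nothing internal to compare against. Your argument is the natural discrete Frostman-type argument, and it delivers a bit more than the statement asks for, since it pins both dimensions to $\alpha$ rather than merely showing they agree.

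A few small points are worth tightening. First, to truncate an oversized covering interval you should intersect with the ball $[-r^n,r^n)$ rather than with the annulus $I_n^{(r)}$: the latter intersection may split $B_i$ into two intervals (harmless, but an extra factor of $2$ to track), whereas intersecting with the ball preserves the interval property and can only decrease $d_i^{\alpha'}$. Second, the estimate $|A\cap B_i|\le c''d_i^\alpha$ uses the hypothesis at scale $n_i$ with $r^{n_i}\asymp d_i$, but \eqref{eqC2} is only assumed for \emph{large} $n$; for small $d_i$ you need the trivial bound $|A\cap B_i|\le d_i$ and then absorb the bounded range $d_i< 2r^{n_0}$ into the constant $c''$ (possible because $d_i\ge 1$ and $\alpha\le 1$). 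Third, the positivity of $C=c'-cr^{-\alpha}$ requires $r$ large enough; this is legitimate only because ${\rm dim}_{\rm H}$ as defined via $m_\alpha^{(r)}$ is independent of the base $r\ge 2$, which is a fact from \cite{BT} that you should invoke explicitly. With these three clarifications the argument is complete, and one can check that it is consistent with (and gives a self-contained proof of) the cited result from \cite{ACHR}.
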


%%%%%%%%%%%%%%%%%%%%%

\noindent To see $\mathcal{Z}(L)$ is a
fractal we use the following statement from \cite{ACHR}:

\begin{proposition} \label{T3} (Theorem 3 of \cite{ACHR})
Let $A \subseteq \mathbb{Z}$. Suppose there are positive constants
$c, \;c'$ such that for all $x \in A$ and all $n \in \mathbb{N}$ ,
\begin{equation}
\label{propfractal}
| A \cap[x-r^n, x+r^n) | \ge c'r^{n\alpha} \text{ and } | A
\cap[-r^n, r^n) | \le cr^{n\alpha}.
\end{equation}
Then $\Delta(A) = {\rm{dim}}_{\rm{UM}} (A)$.
\end{proposition}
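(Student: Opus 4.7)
The plan is to establish $\Delta(A)={\rm{dim}}_{\rm{UM}}(A)$ under hypothesis (\ref{propfractal}). Since the remark preceding Proposition~\ref{C2} already gives ${\rm{dim}}_{\rm{UM}}(A)\le\Delta(A)$ for free, the content is all in the reverse inequality. I would first identify both quantities with the exponent $\alpha$ appearing in (\ref{propfractal}), and then extract from the uniform lower-density hypothesis an estimate of the form $N(d,A\cap I_n^{(r)})\lesssim (r^n/d)^{\alpha}$, which is exactly what $\Delta(A)\le\alpha$ demands.

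The identification ${\rm{dim}}_{\rm{UM}}(A)=\alpha$ is direct: the upper bound in (\ref{propfractal}) gives ${\rm{dim}}_{\rm{UM}}(A)\le\alpha$, while applying the lower bound at any fixed $x\in A$ together with $[x-r^n,x+r^n)\subseteq[-r^{n+1},r^{n+1})$ (valid once $|x|\le r^n$) forces ${\rm{dim}}_{\rm{UM}}(A)\ge\alpha$. It therefore suffices to prove $\Delta(A)\le\alpha$, which in view of Definition~\ref{entropy} reduces to showing that $(dr^{-n})^{\alpha'}N(d,A\cap I_n^{(r)})\to 0$ uniformly over $1\le d\le r^{n(1-\epsilon)}$, for every $\alpha'>\alpha$ and every $\epsilon>0$.

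For the key packing estimate I would argue as follows. Set $N:=N(d,A\cap I_n^{(r)})$ and pick the largest integer $m\ge 0$ with $r^m\le d$, so $r^m>d/r$. Given the $N$ disjoint intervals $[x_i-d,x_i+d)$ with $x_i\in A\cap I_n^{(r)}$, each contains the smaller interval $[x_i-r^m,x_i+r^m)$, which by the lower bound of (\ref{propfractal}) holds at least $c'r^{m\alpha}$ points of $A$. Because these smaller intervals are still pairwise disjoint and all lie inside $[-r^{n+1},r^{n+1})$, the upper bound yields $Nc'r^{m\alpha}\le cr^{(n+1)\alpha}$, hence $N\le C(r^n/d)^{\alpha}$ for a constant $C=C(c,c',r,\alpha)$. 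Substituting gives
\begin{equation*}
(dr^{-n})^{\alpha'}N(d,A\cap I_n^{(r)})\le C(dr^{-n})^{\alpha'-\alpha}\le Cr^{-n\epsilon(\alpha'-\alpha)}\longrightarrow 0,
\end{equation*}
uniformly in the admissible range of $d$, whence $\Delta(A)\le\alpha'$ for every $\alpha'>\alpha$ and the required inequality $\Delta(A)\le\alpha={\rm{dim}}_{\rm{UM}}(A)$ follows.

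The main obstacle is not the algebra but keeping the constants uniform: $C$ must not silently depend on $n$ or $d$, the container $[-r^{n+1},r^{n+1})$ must be aligned with the hypothesis (which is only stated for $[-r^n,r^n)$, but immediately extends up to a multiplicative constant by a covering), and the small-$d$ regime (say $d<r$, where $m=0$) must be absorbed either into $C$ or handled via the trivial bound $N(d,A\cap I_n^{(r)})\le|A\cap I_n^{(r)}|\le cr^{n\alpha}$, which leads to the same conclusion. Once these bookkeeping details are in place, the argument is essentially the one sketched above.
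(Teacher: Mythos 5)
The paper does not prove this proposition itself; it is quoted verbatim as Theorem~3 of \cite{ACHR}, so there is no in-paper argument to compare against, and your task amounts to reconstructing the cited proof. Your argument is correct and is the natural one: the two inequalities in \eqref{propfractal} pin down $\mathrm{dim}_{\mathrm{UM}}(A)=\alpha$, and the packing bound $N(d,A\cap I_n^{(r)})\le C(r^n/d)^{\alpha}$ follows from double-counting the disjoint sub-intervals $[x_i-r^m,x_i+r^m)$ inside $[-r^{n+1},r^{n+1})$ (using $r\ge 2$ to absorb the shift, and the trivial bound $N\le |A\cap I_n^{(r)}|\le cr^{n\alpha}$ for $d<r$). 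Combined with the general inequality $\mathrm{dim}_{\mathrm{UM}}(A)\le\Delta(A)$ this gives $\Delta(A)=\mathrm{dim}_{\mathrm{UM}}(A)=\alpha$, which is exactly what is needed; the bookkeeping concerns you flag (uniformity of $C$, enlarging $[-r^n,r^n)$ to $[-r^{n+1},r^{n+1})$, the small-$d$ regime) are real but all resolve as you indicate.
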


\noindent{\it Conclusion of the proof of Theorem \ref{principal}.}
Straightforward computations show that both conditions \eqref{eqC2} and
\eqref{propfractal} are satisfied for $\mathcal{Z}(L)$. In particular,
it is a fractal of Hausdorff dimension $\frac{\log L}{\log (3(L+1))}$. This
concludes the proof. \qed

\section{Appendix. Proof of Lemma \ref{lemma1.1}}

\noindent {\it Proof of Lemma \ref{lemma1.1}.} $\kappa (x)=
+\infty$ can occur only if there exists an infinite increasing
subsequence of indices $\{k_i \}_{i\ge 1}$ such that the point $x$
becomes ``incorporated'' into some cluster of level $k_i$ for all
$i\ge 1$. We will show that
\begin{equation}
\label{2.3aa}
\mathbb{P}_p\left( x \text{ belongs to an infinite sequence of
clusters}\right)=0.
\end{equation}
Notice that each cluster of level $k$ necessarily has mass at
least $k+1$ and no point belongs to two different  clusters of the
same mass, as already observed. Setting
\begin{equation}
\label{2.4}
A_k(x)=[x \text { belongs to a cluster of mass } k],
\end{equation}
we shall show that for each fixed $x$
\begin{equation}
\label{2.4a}
\mathbb{P}_p(A_k(x) \text { i.o. in } k)=0,
\end{equation}
which will prove \eqref{2.3aa}.

We will carry out the proof  in two steps. All constants $c_i$
below are strictly positive and independent of $k$. First we
estimate the probability that a given point $z\in \Bbb Z_+$ is the
start-point of a cluster of mass $k\ge 2$. Specifically, we show
that

\begin{equation}
\label{2.5}
\mathbb{P}_p \left(\exists\, {\mathcal{C}} \in \cup_{\ell \ge 1}{\mathbf C}_\ell
\colon \alpha({\mathcal{C}})=z, m({\mathcal{C}})=k\right) \le c_1 e^{-c_2 k}
\end{equation}
for some strictly positive constants $c_1, c_2 >0$ and for each
fixed $k$. In fact we can take $c_2 > \log L$ so that
\begin{equation}
\label{2.7}
c_1(L^k+1)e^{-c_2k} \le 2c_1e^{-c_3k}
\end{equation}
for some constant $c_3 > 0$. This is the most involved part of the
proof.
%%Oct.02,2005 CHANGE IN \Eq(2.6)-  REPLACED $c_5$ by 3
%% CORRECTED THE PROOF IN (2.30) AS H. OBSERVED
In the second step of the proof we show  that if $\mathcal{C} \in
\cup_{\ell \ge 1} {\mathbf C}_\ell$ and $m({\mathcal{C}}) = k$, then
\begin{equation}
\label{2.6}
\text{diam} \big( {\mathcal{C}} \big) \; < 3L^{k-1}.
\end{equation}
Due to \eqref{2.6} we will have the following inclusion:
\begin{equation}
\label{2.8}
A_k(x) \subseteq \big[\exists z\in [x- L^k, x]\colon \alpha({\mathcal{C}})=z, \; {\text {for some}}\; {\mathcal{C}} \in \cup_{\ell \ge 1} {\mathbf
C}_\ell \; {\text {and }}  m({\mathcal{C}})=k \big].
\end{equation}
>From \eqref{2.5}, \eqref{2.7} and \eqref{2.8} we will have
\begin{equation}
\label{2.9}
\mathbb{P}_p \big(A_k(x)\big) \le  c_1(L^k +1) e^{-c_2 k} \le 2c_1e^{-c_3k},
\end{equation}
which, by the Borel-Cantelli lemma, gives  \eqref{2.4a}, and so
\eqref{2.3aa}.

Let us now prove \eqref{2.5}, where $k \ge 2$ and $z \in \Bbb Z_+$.
To any given cluster ${\mathcal C} \in \cup_{\ell\ge 1}{\mathbf C}_\ell$ we
associate a ``genealogical weighted tree". It describes the
successive merging processes which lead to the creation of $\mathcal{C}$, i.e., it tells the levels at which some clusters form runs,
merging into larger clusters and how many constituents entered
each run, down to level 1, and finally the masses of such level 1
clusters. So we represent it as a tree with the root corresponding
to ${\mathcal C}$; the leaves correspond to clusters of level 1, which
are the basic constituents at level 1. This weighted tree gives
the basic information on the cluster, neglecting what was
incorporated as ``dust", on the way.

More formally, we construct the tree iteratively. The root of the
tree corresponds to the cluster $\mathcal C$. If this cluster is of
level 1, the procedure is stopped. For notational consistency such
a tree will be called a 1-leaf tree. To the root we attribute the
index 1, as well as another index which equals the mass of the
cluster.

If the resulting cluster $\mathcal C$ is of level $\ell\,>1$,
 we attribute to the root the
index $\ell$ and add to the graph $n_1$ edges (children) going out
from the root, where $n_1 \ge 2$ is the number of constituents
which form the $\ell$-run leading to $\mathcal{C}$. Each
 endvertex of a newly added edge will correspond to a constituent of
the run, i.e., if $\mathcal C$ has constituents
 ${\mathcal C}_{\ell-1,i_1},\dots, {\mathcal C}_{\ell-1,i_{n_1}} \in {\mathbf
C}_{\ell-1,\ell}$, for suitable $i_1,\dots, i_{n_1}$, then there
is a vertex at the end of an edge going out from the root
corresponding to $\mathcal{C}_{\l-1, i_j}$ for each $j
 = 1, \dots, n_1$. If the constituent corresponding to a given
endvertex is a level 1-cluster, the procedure at this endvertex is
stopped (producing a leaf on the tree), and to this leaf we
attribute an index, which equals the mass of the corresponding
constituent.

If a given endvertex corresponds to a cluster $\wt {\mathcal C}$ of
level $\ell'$ with $1< \ell' < \ell$, then to this endvertex we
attribute the index $\ell' $, and add to the graph $n_2$ new edges
going out of this endvertex, where $n_2$ is the number of
constituents of $\wt {\mathcal C}$ in ${\text {\bf C}}_{\ell'-1,\ell'}$
which make up $\wt {\mathcal{C}}$.

The procedure continues until we reach the state that all
constituents corresponding to newly added edges are level 1
clusters. In this way we obtain a tree with the following
properties:

 i) each vertex of the tree has either 0 or at least two offspring;
in case of 0 offspring we say that the vertex is a {\it leaf} of
the tree. Otherwise we call it a {\it branch node}.

 ii) to each branch node $x$ we attribute an index $\ell_x$; these
indices are strictly decreasing to 1 along any selfavoiding path
from the root to a leaf of the tree.

iii) to each leaf is associated a mass $m \ge 1$. This defines a
map

\begin{equation*}
\gamma\colon \mathcal C \in \cup_{\ell\ge 1}{\mathbf C}_\ell \mapsto
\gamma(\mathcal C) \equiv (\Upsilon (\mathcal C) , \bar l(\mathcal C) , \overline m
(\mathcal C)),
\end{equation*}
where $\Upsilon (\mathcal C)$ is a finite tree with $\mathcal L (\Upsilon
(\mathcal C))$ leaves and $\mathcal N (\Upsilon (\mathcal C))$ branching
nodes. We use the following notation:
\medskip \noindent
$\overline l(\mathcal C)= \{ \ell_1(\mathcal C), \dots , \ell_{\mathcal N
(\Upsilon (\mathcal C))}(\mathcal C) \}$  is a multi-index with one
component for each branching node of $\Upsilon (\mathcal C)$, which
indicates the level at which  branches ``merge" into the cluster
corresponding to the node;

\medskip
\noindent $\overline m (\mathcal C)= \{ m_1(\mathcal C), \dots , m_{\mathcal L
(\Upsilon (\mathcal C))}(\mathcal C) \}$ a multi-index with one component
for each leaf of $\Upsilon (\mathcal C)$, which gives to the mass of
the cluster corresponding to the leaf;

\medskip
\noindent $\bar n (\mathcal C)= \{ n_1(\mathcal C), \dots , n_{\mathcal N
(\Upsilon (\mathcal C))}(\mathcal C) \}$  is a multi-index with one
components for each vertex of $\Upsilon (\mathcal C)$, which gives the
degree of the vertex minus 1. Note that $\bar n (\mathcal C)$ is determined by
$\Up(\mathcal C)$.

To lighten the notation, we will omit the argument $\mathcal C$ in
situations where confusion is unlikely. Thus we occasionally write
$\gamma(\mathcal C) \equiv (\Upsilon, \bar l, \overline m)$ instead of
$(\Upsilon (\mathcal C) , \bar l(\mathcal C) , \overline m (\mathcal C))$.

In order to prove \eqref{2.5} we decompose the event
\begin{equation}
\big [\exists\, \mathcal C \in \cup_{\ell\ge 1}{\mathbf C}_\ell
\colon \alpha(\mathcal C) = z, m(\mathcal C)=k\big] \label{2.12}
\end{equation}
according to the possible values for $\gamma(\mathcal C)$; we shall
abbreviate the number of leaves of $\Up(\mathcal C)$ by $\mathcal L$. Since the
resulting cluster $\mathcal C$, obtained after all merging process ``along
the tree'', has mass $k$, it imposes the following relation
between the multi-indices $\overline m$ and $\bar l$:
\begin{equation}
\sum_{i=1}^{\mathcal L} m_i - \sum_{j=1}^{\mathcal N} (n_j-1)(\ell_j-1) = k
\label{2.12aa}
\end{equation}
Here the first sum runs over all leaves, while the second sum runs
over all branching nodes.
%% (or over all nodes, since the leaves have $n_j = 1$ anyway).
This relation follows from \eqref{2.foura} by induction on the number of
vertices, by writing the tree as the ``union'' of the root and
the subtrees which remain after removing the root. We note that
$\Up$ also has to satisfy
\begin{equation}
\sum_{j=1}^\mathcal N  (n_j-1) = \mathcal L-1, \label{2.14aa}
\end{equation}
because it is a tree, as one easily sees by induction on the
number of leaves. This implies the further restriction
\begin{equation*}
\sum_{i=1}^\mathcal L m_i \ge k+\mathcal L-1,
\end{equation*}
because $\l_j \ge 2$ in each term of the second sum in
\eqref{2.12aa} (recall that we stop our tree construction at each
node corresponding to a cluster of level 1). Thus the probability
of the event in \eqref{2.12} equals to

\begin{equation}
\sum_{r\ge 1} \sum_{\Upsilon\!\colon\!\mathcal L (\Upsilon) =r }
 {\sum_{\bar l, \overline m}}^{\Upsilon}  \mathbb{P}_p \big(\exists\, \mathcal C \in
\cup_{\ell\ge 1}{\mathbf C}_\ell \colon z=\al(\mathcal C), m(\mathcal
C)=k, \gamma (\mathcal C)
 = (\Upsilon, \bar l, \overline m)\big ),
\label{2.12bb}
\end{equation}
where the third sum $\sum_{\bar l, \overline m}^{\Upsilon}$ is
taken over all possible values of $\bar l, \overline m$,
satisfying \eqref{2.12aa}.

A decomposition according to the value of the sum $\sum_i m_i$,
shows that the expression \eqref{2.12bb} equals
\begin{equation}
\sum_{_{r\ge 1}} \sum_{_{\substack{\Upsilon\!\colon\! \\ \mathcal L (\Upsilon) =r}}} \sum_{_{s \ge r-1}} \sum_{_{\substack{\overline m\!\colon\!\! \\ \sum_i m_i  \\
= k +s }}} \sum_{ \bar l}  \mathbb{P}_p \left( \exists \;  \mathcal C \in
\cup_{_{\ell\ge 1}}{\mathbf C}_\ell \colon \alpha(\mathcal C)=z, \; m({\mathcal
C}) = k , \gamma (\mathcal C) = (\Upsilon, \bar l, \overline m)\right),
\label{2.12cc}
\end{equation}
the sum $\sum_{\bar l}$ being taken over possible choices
of $\bar l$ such that $\sum_j (n_j -1)(\ell_j-1)  = s$. The multiple
sum in \eqref{2.12cc} can be bounded from above by
\begin{equation}
 \sum_{r\ge 1} \sum_{\substack{\Upsilon\!\colon\! \\ \mathcal L (\Upsilon) =r}}
\sum_{s \ge r-1} \sum_{\substack{\overline m\!\colon\! \\ \sum_i m_i  = k +s}}
\sum_{ \bar l} p^{k+s} L^{k+2s}. \label{2.14}
\end{equation}
Indeed, for fixed $z, k$ and $(\Up, \bar l, \overline m)$, the
probability
\begin{equation*}
\mathbb{P}_p\left(\exists\;  \mathcal C \colon \alpha(\mathcal C)=z, \;
m ({\mathcal C}) = k , \gamma (\mathcal C) = (\Upsilon, \bar l, \overline m)
\right)
\end{equation*}
is easily estimated by the following argument: the
probability to find a level 1 cluster  of mass $m_i$ which
corresponds to some leaf of the tree, and which starts at a given
point $x$, is bounded from above by $p^{m_i}L^{m_i -1}$. Indeed,
such a cluster has to come from a maximal level 1 run $x_s,
x_{s+1},\dots, x_{s+m_i-1}$ of elements of $\Gamma$, with $x_s = x$
and $x_{j+1}-x_j \le L$ for $j=s,\dots,s+m_i-2$. The number of
choices for such a run is at most $L^{m_i-1}$, and given the
$x_j$, the probability that they all lie in $\Gamma$ is $p^{m_i}$.
Similarly, the probability to find two level 1 clusters of mass
$m_{i_1}$ and $m_{i_2}$ which merge at level $\ell_j$ can be bounded
above by $p^{m_{i_1}}L^{m_{i_1} -1}p^{m_{i_2}}L^{m_{i_2}
-1}L^{\ell_j}$. The factor $L^{\ell_j}$ here is an upper bound for the
number of choices for the distance between the two clusters; if
they are to merge at level $\ell_j$, their distance can be at most
$L^{\ell_j}$. Iterating this argument we get that
\begin{eqnarray*}
\mathbb{P}_p \left( \exists \mathcal C \in \cup_{\ell\ge 1}{\mathbf C}_\ell
\colon \alpha(\mathcal C)=z, \; m({\mathcal C}) = k , \gamma (\mathcal C) =
(\Upsilon, \bar l, \overline m)\right) \\\le p^{\sum_i m_i}
L^{\sum_i (m_i-1)} L^{\sum_j (n_j-1)\l_j},
\end{eqnarray*}
and taking into account that
\begin{eqnarray*}
\sum_i (m_i-1)  + \sum_j (n_j-1)\ell_j &=& \! \! \! \sum_i m_i + \sum_j
(n_j-1)(\ell_j-1) -\sum_i 1 + \sum_j (n_j-1) \\  &=& k+s + s - r +
\sum_j (n_j-1),
\end{eqnarray*}
as well as \eqref{2.14aa}, we get the bound \eqref{2.14}.

The number of terms in the sums of \eqref{2.14} over $\overline m$
and $\bar l$ are respectively bounded by $2^{k +s}$ and $2^{s}$
(since $\sum_j(\ell_j-1) \le \sum_j(n_j-1)(\ell_j-1)= s$ and $\ell_j \ge
2$). Thus we can bound \eqref{2.14} from above by
\begin{eqnarray}
\label{2.16}
 &\sum_{r\ge 1} \sum_{\Upsilon\!\colon\! \mathcal L (\Upsilon) =r}
 \sum_{s \ge r-1}
2^{k +s} 2^{s}  p^{k+s} L^{k+2s} \nonumber\\ \nonumber
&\le   (2p L)^k \sum_{r\ge 1} \sum_{\Upsilon \!\colon \mathcal L (\Upsilon) =r} \sum_{s \ge r-1}
  (4 p L^2)^{s}\\
&\le   (2p L)^k \sum_{r\ge 1} \sum_{\Upsilon \!\colon\! \mathcal L
(\Upsilon) =r}
  \frac{(4 p L^2)^{r-1}}{1-4 p L^2},
\end{eqnarray}
provided we take $4 p L^2 <1 $. Now the number of planted plane
trees of $u$ vertices is at most $4^u$ (see \cite{HPT}). Our trees have $r$ leaves, but all vertices which
are not leaves have degree at least 3 (except, possibly, the
root). Thus, by virtue of \eqref{2.14aa}, these trees have at most
$2r$ vertices. The number of possibilities for $\Up$ in the last
sum is therefore at most $\sum_{u=r+1}^{2r} 4^u \le \frac43 4^{2r}
\le 2\cdot 4^{2r}$. It follows that \eqref{2.16} is further bounded
by
\begin{equation*}
2\frac{(2p L)^k}{1-4 p L^2} \sum_{r\ge 1} 4^{2r}
  (4 p L^2)^{r-1}
=   \frac{32(2p L)^k}{1-4 p L^2} \sum_{r\ge 1}
  (64 p L^2)^{r-1}.
%\label{2.17}
\end{equation*}
If we take $64p L^2 <1 $, this can be bounded by
$$
\frac{32(2p L)^k}{(1-4 p L^2)(1-64 p L^2)},
$$
which proves \eqref{2.5} and \eqref{2.7} with $c_2 = -\log(2p) -
\log  L > \log L$ for our choice of $p,L$.
It remains to show \eqref{2.6}. It is trivially correct for $k=1$;
in fact a cluster of mass 1 has to be a singleton by
\eqref{2.fourb}. We will use induction on $k$. Assume \eqref{2.6}
holds for all clusters with mass at most $k-1$, where $k\ge 2$.
Let $\mathcal C$ be a cluster with $m(\mathcal C)=k$ and of level $\ell$.
Thus $\mathcal C \in {\mathbf C}_{\ell,\ell}$, and $1 \le \ell \le
k-1$, by virtue of \eqref{2.fourc}. If $\ell=1$ then diam$(\mathcal C)
\leq (k-1)L < 3L^{k-1}$, for $k\ge 2$, provided we take $L \ge 2$.\
If $\ell \ge 2$, then there exist $n\ge 2$, and $\mathcal C_{\ell-1,
i_1}, \dots \mathcal C_{\ell-1, i_n} \in {\mathbf C}_{\ell-1
,\ell}$ such that $\mathcal C$ is made up from the constituents $\mathcal
C_{\ell-1 , i_1}, \dots \mathcal C_{\ell-1 , i_n}$ (where, for
simplicity, we have omitted the indication of the level of the
constituents). If $m_j = m (\mathcal C_{\ell -1, i_j})$, then $m_j \ge
\ell$ (by \eqref{2.fourc}), and from \eqref{2.foura} we see that $m_j
\le k-n+1$ for each $j$. From this and the induction hypothesis we
get
\begin{equation}
\text{diam}(\mathcal C) \le \sum_{j=1}^n {\text{diam}}(\mathcal C_{\ell-1,
i_j}) + (n-1)L^{\ell} < 3nL^{k-n}+(n-1)L^{k-n+1}\le 3L^{k-1}
\label{2.25}
\end{equation}
for all $L\ge 3$ and $n \ge 2$. This proves \eqref{2.6} and the
lemma. \qed

\bigskip

\noindent{\it Proof of Proposition \ref{KSV}.} From \eqref{2.5} and \eqref{2.7} we have

\begin{eqnarray}
\mathbb{P}_p(\exists \; {\mathcal C}_{\infty,j}\colon \; m({\mathcal C}_{\infty,j}) > k,
d({\mathcal C}_{\infty,j}, 0) < L^{m({\mathcal C}_{\infty,j})})\le \sum_{m >  k} c_1 L^me^{-c_2m}
\end{eqnarray}
which tends to zero as $k \to \infty$, proving that $\chi(\cdot) <\infty$ a.s.

Now if $\xi\in \Xi(p)$ is such that $\chi(\xi)$ is finite and non-zero, then there exists a unique cluster $\mathcal
C^\ast \in {\mathbf C}_{\infty}(\xi)$ such that $m (\mathcal{C}^\ast)
= \chi(\xi)$ and $d(\mathcal{C}^\ast, 0) < L^{\chi(\xi)}$. The
existence of ${\mathcal{C}}^*$ follows at once from the definition of $\chi$.
For the uniqueness we observe that if two such clusters, say
${\mathcal{C}}'$ and ${\mathcal{C}}''$, would exist, then they would have to satisfy
$d({\mathcal{C}}', {\mathcal{C}}'') < L^{\chi(\xi)} = L^{\min\{m({\mathcal{C}}'),m({\mathcal{C}}'')\}}$, which
contradicts \eqref{2.five} by virtue of the assumption
${\mathcal{C}}', {\mathcal{C}}'' \in {\mathbf C}_\infty$.

We now construct a new environment $\widetilde\xi$ (depending on
$\xi$): when $\chi(\xi)=0$ we let $\widetilde \xi_i=\xi_i$ for all $i \ge 1$.
On the other hand, if $0<\chi(\xi)<\infty$ we set,
\begin{equation}
\label{2.39}
\widetilde \xi_i=\begin{cases}
 0 &\text{ if } i \le \omega(\mathcal{C}^*)\\ \xi_i &\text{ if } i >
\omega(\mathcal{C}^*).
\end{cases}
\end{equation}

We shall now show that
\begin{equation}
\chi(\widetilde\xi) = 0. \label{2.37}
\end{equation}

Of course we only have to check this in the case $0 <
\chi(\xi) < \infty$. We claim that $\mathbf{ C}_\infty(\widetilde \xi)$ is also
well defined an all clusters in
$\mathbf{ C}_\infty(\widetilde \xi)$
are also clusters in $\mathbf{C}_\infty(\xi)$ (which
are located in $[\omega(\mathcal{C}^*)+1, \infty)$) and the masses of such a
cluster in the two environments $\xi$ and $\widetilde \xi$ are the same.
To see this we simply run through the construction of the clusters in
$\cup_{\ell \ge 1}\mathbf{C}_\ell$ in the environment $\widetilde \xi$, until
there arises a difference between these this construction and the
construction in the environment $\xi$. More precisely, we apply
induction with respect to the level of the clusters. Clearly any
cluster of level 0 in $\widetilde \xi$ is simply a single point of $\Gamma(\xi)$
which lies in $[\omega(\mathcal{C}^*)+1, \infty)$, and has mass 1. This is also
a cluster of level 0 and mass 1
in $\xi$. Assume now that we already know that any cluster in $\widetilde
\xi$ of level at most $k$ is a cluster of $\xi$ of level $k$ and located
in $[\omega(\mathcal{C}^*)+1, \infty)$ and with the same mass in $\xi$ and $\widetilde \xi$.

Since $\widetilde\xi_i = 0$ for $i \le \omega(\mathcal{C}^*)$,
the span of any $(k+1)$-run in $\widetilde \xi$ has to be contained in
$[\omega(\mathcal{C}^*)+1,\infty)$. Therefore the span of any cluster of level $k+1$ in
environment $\widetilde \xi$ also has to be contained in $[\omega(\mathcal{C}^*)+1,\infty)$.
In addition, since the two environments $\xi$
and $\widetilde\xi$ agree in this interval, a difference in the
constructions or masses of some cluster of level $k+1$
can arise only because in $\xi$ there
is a $(k+1)$-run which contains clusters of level at most $k$ which lie in
$[\omega(\mathcal{C}^*)+1, \infty)$ as well as clusters which intersect
$[0, \omega(\mathcal{C}^*)]$.
But then these  clusters of level at most $k$ will be constituents of a single
$(k+1)$-cluster, $\mathcal{C} \in \bold C_\infty(\xi)$ say. Thus span($\mathcal{C}$)
has to contain points of both
$[0,\omega(\mathcal{C}^*)]$ and of $[\omega(\mathcal{C}^*)+1, \infty)$ in $\xi$.
Consequently, span($\mathcal{C})$ has to contain both points $\omega(\mathcal{C}^*)$
and $\omega(\mathcal{C}^*)+1$. Since $\omega(\mathcal{C}^*) \in \mathcal{C}^*$ we then have from
\eqref{2.y} that span($\mathcal{C}^*) \subset \text{span}(\mathcal{C})$ and $\mathcal{C}^* \ne
\mathcal{C}$ (because $\omega(\mathcal{C}^*)+1 \notin \text{span} (\mathcal{C}^*$)). But no such $\mathcal{C}$
can exist, because $\mathcal{C}^* \in \bold C_\infty$.

This establishes our last claim. Now, by definition of $\chi$,
\eqref{2.37} is equivalent to
\begin{equation}
d(\mathcal{C},0) \ge L^{m(\mathcal{C})} \label{2.38}
\end{equation}
for all clusters $\mathcal{C}$ in $\bold C_\infty(\widetilde\xi)$. In view of
our claim this will be implied by \eqref{2.38} for all
clusters $\mathcal{C}$ in $\mathbf{C}_\infty(\xi)$ located in $[\omega(\mathcal{C}^*),
\infty)$. Now, if $\mathcal{C}$ is such a cluster with $m(\mathcal{C}) \le m(\mathcal{C}^*)$,
then \eqref{2.38} holds, because, by virtue of \eqref{2.five},
\begin{equation*}
d(\mathcal{C},0)= \alpha(\mathcal{C}) \ge \alpha(\mathcal{C}) - \omega(\mathcal{C}^*) = d(\mathcal{C}, \mathcal{C}^*) \ge
L^{m(\mathcal{C})}.
\end{equation*}
On the other hand, if $m(\mathcal{C}) > m(\mathcal{C}^*) = \chi(\xi)$, then the
definition of $\chi$ shows that we have $\alpha(\mathcal{C}) \ge L^{m(\mathcal{C})}$.
This proves \eqref{2.38} in all cases, and therefore also proves
\eqref{2.37}.

We now have
$$
1 = \mathbb{P}_p(\chi(\xi) < \infty ) \le \mathbb{P}_p(\chi(\xi) = 0) + \sum_{n
=0}^\infty \mathbb{P}_p(\omega(\mathcal{C}^*) =n,\chi(\xi^{(n)})=0),
$$
where $\omega(\mathcal{C}^*)$ is as described above, and $\xi^{(n)}$  given by
\begin{equation}
\xi_i^{(n)} = \begin{cases} 0 &\text{ if } i\le n\\
\xi_i & \text{ if } i >n.
\end{cases}
\end{equation}

Thus, either $\mathbb{P}_p(\chi(\xi)=0 ) >0$ or there is some non-random $n
\in \mathbb{Z}_+$ for which $\mathbb{P}_p(\chi(\xi^{(n)})= 0) >0$. However,

\begin{eqnarray*}
 \mathbb{P}_p(\chi(\xi) = 0)  &\ge& P(\chi(\xi) = 0,\; \xi_i = 0 \text{
for }
0 \le i \le n)\\\nonumber
& =& \mathbb{P}_p(\chi(\xi^{(n)}) = 0,\; \xi_i = 0 \text{ for }
0 \le i \le n)\\\nonumber
& =&\mathbb{P}_p(\chi(\xi^{(n)}) = 0) \mathbb{P}_p(\xi_i = 0 \text{ for } 0 \le i \le n)
\end{eqnarray*}
(because $\xi^{(n)}$ is determined by the $\xi_i$ with $i > n$).
Thus $\mathbb{P}_p(\chi(\xi) = 0) > 0$ in all cases, concluding the proof.
\qed

\bigskip

\noindent {\bf Acknowledgements.}

The authors thank Yuval Peres and Leonardo T. Rolla for many useful discussions, and thank Lionel Levine for the computer simulations. B.N.B.L. thanks CBPF and IMPA, and V.S. and M.E.V. thank UFMG for hospitality and support. Most of this work was done while M.E.V. was senior researcher of CBPF.  B.N.B.L. is partially supported by CNPq grant 301844/2008-9. M.E.V. is partially supported by CNPq grant 302796/2002-9. V. S. is partially supported by CNPq grant 484801/2011-2.

V.S. and M.E.V. thank MSRI for hospitality and financial support during the final preparation of this paper.


\begin{thebibliography}{99}
\bibitem{ACHR} M.R. Allen, G.S.H. Cruttwell, K. E. Hare, J.-O. R\"oning (2007). {\em Dimensions of fractals in the large}.
Caos, Solitons \& Fractals {\bf 31}, 5--13.
\bibitem{BT} M. Barlow, S.J. Taylor (1992). {\em Defining Fractal subsets of $\ZZ^d$}.
Proc. London Math. Society {\bf 64}, 125--152.
\bibitem{BK} I. Benjamini, H. Kesten (1995). {\em Percolation of arbitrary words in $\{0,1\}^\NN$}.
Annals of Probability {\bf 23}, 1024--1060.
\bibitem{Gac} P. G\'acs (2004). {\em Compatible sequences and a slow Winkler percolation}. Combin. Probab. Comput. {\bf 6}, 815--856.
\bibitem{GLR} G.R. Grimmett, T.M. Liggett, T. Richthammer (2010). {\em  Percolation of arbitrary words in one dimension}. Random Struct
\& Algorithms {\bf 37}, 1, 85--99.
\bibitem{HPT} F. Harary, G. Prins, W.T. Tutte (1964). {\em The Number of Plane Trees.} Indag. Math. {\bf 26}, 319--327.
\bibitem{KSV} H. Kesten, V. Sidoravicius, M.E. Vares (2007). {\em Oriented percolation in dependent environment}.
Preprint.
\bibitem{KSZ} H. Kesten, V. Sidoravicius, Y. Zhang  (2001). {\em Percolation of arbitrary words on the closed
package graph of $\ZZ^2$}. Eletronic Journal of Probability {\bf 6}, 1--27.
\bibitem{Wi} P. Winkler  (2000). {\em Dependent percolation and colliding random walks}. Random Structures
and Algorithms. {\bf 16}, 1, 58--84.
\end{thebibliography}
\end{document}